\documentclass[eqthmnum, nocolour]{jt-calcs}

\usepackage[backend=bibtex,style=alphabetic,sorting=nyt]{biblatex}
\bibliography{combined-bib}
\usepackage{verbatim}
\usepackage{mathrsfs}
\usepackage{mathscinet}

\setlist[enumerate,1]{label=(\alph*)}






\newcommand{\Q}{\mathbb{Q}}

\newcommand{\aut}{\mathrm{Aut}}
\newcommand{\syl}{\mathrm{Syl}}
\newcommand{\irr}{\mathrm{Irr}}
\newcommand{\gal}{\mathrm{Gal}}

\newcommand{\wt}[1]{\widetilde{#1}}

\newcommand{\PSU}{\mathrm{PSU}}

\newcommand{\GU}{\ensuremath{\mathrm{GU}}}

\renewcommand{\epsilon}{\varepsilon}

\addressm{MSU Denver, PO Box 173362, Campus Box 38, Denver, CO 80217-3362, USA}
\emailm{aschaef6@msudenver.edu}

\addresst{University of Arizona, 617 N. Santa Rita Ave., Tucson AZ 85721, USA}
\emailt{jaytaylor@math.arizona.edu}

\title{On Self-Normalising Sylow $2$-Subgroups in Type $\A$}
\author{Amanda Schaeffer Fry and Jay Taylor}

\begin{document}
\begin{abstract}
Navarro has conjectured a necessary and sufficient condition for a finite group $G$ to have a self-normalising Sylow $2$-subgroup, which is given in terms of the ordinary irreducible characters of $G$. The first-named author has reduced the proof of this conjecture to showing that certain related statements hold when $G$ is quasisimple. In this article we show that these conditions are satisfied when $G/Z(G)$ is $\PSL_n(q)$, $\PSU_n(q)$, or a simple group of Lie type defined over a finite field of characteristic $2$.
\end{abstract}

\section{Introduction}

\begin{pa}
For any integer $n \geqslant 1$ we will denote by $\Q_n$ the $n$th cyclotomic field, obtained from the rationals $\mathbb{Q}$ by adjoining a primitive $n$th root of unity. In \cite{schaeffer-fry:2015:odd-degree-characters}, the first-named author began an investigation into the following conjecture.
\end{pa}

\begin{conj}[Navarro]\label{mainprob}
Let $G$ be a finite group and let $\sigma\in\gal(\Q_{|G|}/\Q)$ be an automorphism fixing $2$-roots of unity and squaring $2'$-roots of unity. Then $G$ has a self-normalising Sylow $2$-subgroup if and only if every ordinary irreducible character of $G$ with odd degree is fixed by $\sigma$.
\end{conj}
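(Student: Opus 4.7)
The natural plan is the McKay--Navarro philosophy combined with a classification-based reduction. On the ``local'' side, let $N = N_G(P)$ for $P$ a Sylow $2$-subgroup; then $N/P$ is a $2'$-group, and Clifford theory identifies the odd-degree characters of $N$ with linear characters of $N/P$ attached to suitable $P$-stable characters of $P$. One verifies directly that $N/P = 1$ (equivalently, $P$ is self-normalising in $G$) if and only if every odd-degree character of $N$ is fixed by $\sigma$: because $\sigma$ squares $2'$-roots of unity and fixes $2$-power roots of unity, any nontrivial $2'$-contribution to a character value destroys $\sigma$-invariance, while a purely $2$-power value is automatically preserved.

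To transfer this from $N$ to $G$ one would like a Galois-equivariant McKay bijection $\irr_{2'}(G) \to \irr_{2'}(N)$. No such bijection is known unconditionally, so the approach must instead be to formulate inductive conditions on quasisimple groups, in the spirit of the Isaacs--Malle--Navarro reduction of the McKay conjecture, whose truth across all quasisimple groups forces Conjecture \ref{mainprob} to hold in general. Such a reduction is the contribution of the first-named author in \cite{schaeffer-fry:2015:odd-degree-characters}, replacing Conjecture \ref{mainprob} by an explicit list of conditions on the ordinary characters of each quasisimple group.

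The bulk of the work is then a type-by-type verification across the classification of finite simple groups. Alternating groups reduce to partition combinatorics and the Murnaghan--Nakayama rule; sporadic groups and their covers are handled computationally from Atlas and GAP data. For simple groups of Lie type in defining characteristic $2$, the Sylow $2$-subgroup is a maximal unipotent subgroup $U$, its normaliser is a Borel subgroup $B$, and the odd-degree characters of $B$ are governed by the characters of the quotient torus $B/U$; this case is comparatively transparent. The main obstacle will be type $\A$ in cross-characteristic, that is, $\mathrm{PSL}_n(q)$ and $\PSU_n(q)$ with $q$ odd. Here one must parametrise the odd-degree characters through Lusztig's Jordan decomposition, track the action of $\sigma$ through the combinatorics of semisimple classes in the dual group and their attached unipotent labels, and control the descent from $\mathrm{GL}_n(q)$ or $\GU_n(q)$ to the quasisimple quotient by handling central characters and the action of diagonal and field automorphisms simultaneously. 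Coordinating the Galois action with the inductive conditions in this setting --- in particular establishing the $\sigma$-equivariance of the relevant Lusztig labelling --- is where the principal case work lives, and is the focus of the present article.
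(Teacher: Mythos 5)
You should be careful about what is being claimed here. \cref{mainprob} is an open \emph{conjecture}: neither this paper nor any paper in the literature proves it. What you have written is not a proof (nor a proof attempt) but an accurate précis of the reduction programme: the paper itself only records the conjecture, cites the reduction of \cite{schaeffer-fry:2015:odd-degree-characters} to the two \emph{SN2S-Good} conditions (\cref{cond:conjIF,cond:conjFI}), and then verifies those conditions for simple groups of Lie type in defining characteristic $2$ and for type $\A$ in all characteristics --- leaving the remaining groups of Lie type in odd characteristic to future work, as the introduction explicitly notes. So the ``type-by-type verification across the classification'' you describe is precisely what remains incomplete; one cannot deduce the conjecture from the contents of this paper.

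Subject to that caveat, your outline of the strategy is faithful to the paper's framing. A few refinements are worth recording. First, on the local side, the precise statement used is \cite[Lemma 2.1(ii)]{navarro-tiep-turull:2007:p-rational-characters}: $C_{N_G(P)/P}(Q)=1$ is equivalent to $GQ/Z$ having a self-normalising Sylow $2$-subgroup; your heuristic about $2'$-roots of unity being squared is the right intuition but the actual equivalence is routed through that lemma rather than a direct Clifford-theoretic parametrisation of odd-degree characters of $N$. Second, the two inductive conditions are genuinely asymmetric (one lives on the quasisimple cover, the other on the simple group), which is why the paper proves them by quite different arguments. Third, and most importantly for the type-$\A$ case you identify as the bottleneck: the decisive new technical ingredient is not just Jordan decomposition but the behaviour of Kawanaka's generalised Gelfand--Graev representations under Galois automorphisms (\cref{prop:galois-aut-GGGR}), used to distinguish odd-degree constituents of a $\widetilde{G}$-character after restriction to $G = \SL_n^{\varepsilon}(q)$ and to reduce, via Tiep--Zalesski's rationality results, to a small number of hard cases ($n\in\{2,4\}$ with $q\equiv\pm3\pmod 8$) which are then handled by ad hoc arguments. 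That GGGR step is the paper's novelty and is not captured by your description of ``$\sigma$-equivariance of the Lusztig labelling.''
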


\begin{pa}
This statement would be an immediate consequence of the Galois-McKay conjecture, which is a refinement of the well-known McKay conjecture due to Navarro, see \cite[Conjecture A]{navarro:2004:galois-mckay}. For a finite group $G$ we denote by $\Irr(G)$ the set of ordinary irreducible characters and given a prime $\ell$ we denote by $\Irr_{\ell'}(G) \subseteq \Irr(G)$ those irreducible characters whose degree is coprime to $\ell$. The Galois-McKay conjecture then posits that for any finite group $G$, prime $\ell$, and Sylow $\ell$-subgroup $P \leqslant G$, there should exist a bijection between $\irr_{\ell'}(G)$ and $\irr_{\ell'}(N_G(P))$, as predicted by the McKay conjecture, which behaves nicely with respect to the action of certain elements of the Galois group.
\end{pa}

\begin{pa}
While the McKay conjecture has been reduced to proving certain inductive statements for simple groups in \cite{imn:2007:reduction-mckay}, and even recently proven for $\ell=2$ in \cite{malle-spath:2015:mckay2}, no such reduction yet exists for Galois-McKay.  Further, a reduction for Galois-McKay seems further from fruition in the case $\ell=2$ than for odd primes.  However, a proof of \cref{mainprob} would provide more evidence for the conjecture, and we consider it to be a weak form of the Galois-McKay refinement for $\ell=2$.  We hope that some of the observations made in the course of proving \cref{mainprob} will be useful in working with an eventual reduction for Galois-McKay for $\ell=2$.  We also remark that the corresponding weak form for odd $\ell$ has been proven in  \cite{navarro-tiep-turull:2007:p-rational-characters}.
\end{pa}

\begin{pa}
The main result of \cite{schaeffer-fry:2015:odd-degree-characters} is a reduction of \cref{mainprob} to certain inductive statements for simple groups, which we recall below in \cref{sec:SimpleStatements}, and the verification of these statements for some simple groups. The goal of this work is to extend and simplify the proofs there in order to complete the verification for simple groups of Lie type in characteristic $2$ and simple groups of type $\A$ in all characteristics. Specifically we prove the following.
\end{pa}

\begin{maintheorem}\label{thm:SN2SgoodTypeA}
Assume $\bG$ is a simple and simply connected algebraic group defined over $\mathbb{K} = \overline{\mathbb{F}_p}$, an algebraic closure of the finite field $\mathbb{F}_p$ of prime order $p>0$, and let $F : \bG \to \bG$ be a Frobenius endomorphism of $\bG$. If either $p=2$ or $\bG = \SL_n(\mathbb{K})$, then whenever the quotient $\bG^F/Z(\bG^F)$ is simple, it is SN2S-Good.
\end{maintheorem}

\begin{pa}
One of our key tools used in the proof of \cref{thm:SN2SgoodTypeA} is Kawanaka's generalised Gelfand--Graev representations (GGGRs). These are a family of characters which have already shown themselves to be remarkably useful for deducing the action of automorphisms of a finite reductive group on the set of irreducible characters, see \cite{cabanes-spaeth:2015:equivariance-and-extendibility-in-groups-of-type-A} and \cite{taylor:2016:action-of-automorphisms-symplectic}. One of the reasons why they are so useful is that the image of a GGGR under an automorphism of the group is again a GGGR and the resulting GGGR can be easily described. Here we show that the same holds for certain Galois automorphisms, see \cref{prop:galois-aut-GGGR}. The statement holds whenever the GGGRs are defined and may be of independent interest.
\end{pa}

\begin{pa}
In \cite{schaeffer-fry:2015:odd-degree-characters} it is shown that all sporadic simple groups and simple alternating groups are SN2S-Good. Thus we are left with checking that most simple groups of Lie type defined over a field of odd characteristic are SN2S-Good. In this situation one should be able to employ the Harish-Chandra techniques used by Malle and Sp\"ath in \cite{malle-spath:2015:mckay2} to solve the McKay Conjecture for $\ell=2$. However, this is ultimately quite different from our line of argument here and will be considered elsewhere.
\end{pa}

\begin{pa}
We now outline the structure of the paper.  In \cref{sec:SimpleStatements}, we discuss the reduction of \cref{mainprob} to simple groups proved in \cite{schaeffer-fry:2015:odd-degree-characters}.  In \cref{sec:lusztig-series}, we introduce some general notation regarding finite reductive groups and the action of the Galois group on Lusztig series under specific conditions.  In \cref{sec:GGGRs}, we continue this discussion by introducing generalized Gelfand-Graev characters and their behavior under the action of the Galois group.  Sections \ref{sec:p2IF} and \ref{sec:condition-conjFI} are dedicated to proving \cref{thm:SN2SgoodTypeA} in the case that $p=2$.  In the remaining sections, we prove \cref{thm:SN2SgoodTypeA} for $\bG=\SL_n(\mathbb{K})$. 
\end{pa}

\section{The Reduction Statements for Simple Groups}\label{sec:SimpleStatements}
\begin{pa}
In \cite{schaeffer-fry:2015:odd-degree-characters} it was shown that \cref{mainprob} holds for any finite group if every finite simple group is SN2S-Good. The notion of being SN2S-Good is comprised of two conditions. One condition is on the simple group itself and the second is on its quasisimple covering groups. Before stating these conditions, we introduce some notation.
\end{pa}

\begin{pa}[Notation]
Let $G$ be a finite group. We will denote by $\Aut(G)$ the automorphism group of $G$. If $Q \leqslant \Aut(G)$ is any subgroup then we denote by $GQ$ the semidirect product of $Q$ acting on $G$. As in the introduction, $\Irr(G)$ denotes the set of ordinary irreducible characters of $G$ and $\Irr_{\ell'}(G) \subseteq \Irr(G)$ is the set of those irreducible characters whose degree is coprime to $\ell$, where $\ell$ is a prime. The set of all Sylow $\ell$-subgroups of $G$ will be denoted by $\syl_{\ell}(G)$. If $H \leqslant G$ is a subgroup of $G$ and $\chi \in \Irr(H)$ is an irreducible character then we denote by $\Irr(G|\chi)$ the set of all irreducible characters $\psi \in \Irr(G)$ whose restriction $\Res_H^G(\psi)$ to $H$ contains $\chi$ as an irreducible constituent; we say that $\psi$ covers $\chi$. Moreover, for any element $g \in G$ we denote by ${}^g\chi$ the irreducible character of ${}^gH = gHg^{-1}$ defined by ${}^g\chi(h) = \chi(g^{-1}hg)$ for all $h \in {}^gH$. We will write $\Irr_{\ell'}(G|\chi)$ for the intersection $\Irr(G|\chi) \cap \Irr_{\ell'}(G)$.
\end{pa}

\begin{assumption}
From this point forward $\sigma \in \Gal(\mathbb{Q}_{|G|}/\mathbb{Q})$ will denote the Galois automorphism fixing $2$-roots of unity and squaring $2'$-roots of unity, c.f., \cref{mainprob}.
\end{assumption}

\begin{condition}\label{cond:conjIF}
Let $G$ be a finite quasisimple group with centre $Z \leqslant G$ and $Q \leqslant \Aut(G)$ a $2$-group. Assume there exists a $Q$-invariant Sylow $2$-subgroup $P/Z\in \syl_2(G/Z)$ such that $C_{N_G(P)/P}(Q)=1$.  Then for any $Q$-invariant and $\sigma$-fixed $\lambda\in\irr(Z)$, we have $\chi^\sigma=\chi$ for any $Q$-invariant $\chi\in\irr_{2'}(G|\lambda)$.
\end{condition}

\begin{condition}\label{cond:conjFI}
Let $S$ be a finite nonabelian simple group and $Q \leqslant \Aut(S)$ a $2$-group. Assume $P \in \syl_2(S)$ is a $Q$-invariant Sylow then if every $Q$-invariant $\chi\in\irr_{2'}(S)$ is fixed by $\sigma$ we have $C_{N_S(P)/P}(Q)=1$.
\end{condition}

\begin{definition}\label{def:Goodness}
Let $S$ be a finite non-abelian simple group. We say $S$ is \emph{SN2S-Good} if \cref{cond:conjFI} holds for $S$ and \cref{cond:conjIF} holds for any quasisimple group $G$ satisfying $G/Z\cong S$.
\end{definition}

\begin{pa}\label{pa:conditions-remarks}
We end this section with some remarks concerning the above conditions. Firstly, if $P \leqslant G$ and $Q \leqslant \Aut(G)$ are as in \cref{cond:conjIF} then the condition that $C_{N_G(P)/P}(Q)=1$ is equivalent to $GQ/Z$ having a self-normalising Sylow $2$-subgroup, see \cite[Lemma 2.1(ii)]{navarro-tiep-turull:2007:p-rational-characters}. Secondly, assume $G$ is quasisimple with simple quotient $S = G/Z$ and let $\hat{G}$ be a universal perfect central extension, or Schur cover, of $S$. It is easily checked that if \cref{cond:conjIF} holds for $\hat{G}$ then it holds for $G$. Indeed, as $\hat{G}$ is a Schur cover there exists a surjective homomorphism $\hat{G} \to G$ with central kernel. This induces an injective map $\Irr(G) \to \Irr(\hat{G})$ and a surjective homomorphism $\Aut(\hat{G}) \to \Aut(G)$, see \cite[Corollary 5.1.4(a)]{gorenstein-lyons-solomon:1998:classification-3}, and the claim follows.
\end{pa}

\begin{rem}
We note that a simplified version of one side of the reduction, namely \cref{cond:conjIF}, has been proven in \cite{navarro-tiep:2015:irreducible-representations-of-odd-degree}. However, for the purposes of this paper, we work with our stronger condition.
\end{rem}

\section{Galois Automorphisms and Lusztig Series}\label{sec:lusztig-series}
\begin{assumption}
From this point forward we denote by $\mathbb{K} = \overline{\mathbb{F}_p}$ an algebraic closure of the finite field of prime order $p$. Moreover, $\ell$ denotes a prime.
\end{assumption}

\begin{pa}[The Basic Setup]\label{pa:basic-setup}
We introduce here the basic setup that will be used throughout this article. In particular, $\bG$ will be a connected reductive algebraic group defined over $\mathbb{K}$ and $F : \bG \to \bG$ will be a Frobenius endomorphism admitting an $\mathbb{F}_q$-rational structure $G = \bG^F$. Moreover, we denote by $\iota : \bG \hookrightarrow \widetilde{\bG}$ a regular embedding, in the sense of \cite[\S7]{lusztig:1988:reductive-groups-with-a-disconnected-centre}. The Frobenius endomorphism of $\widetilde{\bG}$ will again be denoted by $F$ and $\widetilde{G} = \widetilde{\bG}^F$ will be the resulting finite reductive group. 

We assume fixed pairs $(\bG^{\star},F^{\star})$ and $(\widetilde{\bG}^{\star},F^{\star})$ dual to $(\bG,F)$ and $(\widetilde{\bG},F)$ respectively. As before we set $G^{\star} = \bG^{\star F^{\star}}$ and $\widetilde{G}^{\star} = \widetilde{\bG}^{\star F^{\star}}$. We now choose an $F$-stable maximal torus $\bT_0 \leqslant \bG$ and a dual $F^{\star}$-stable maximal torus $\bT_0^{\star} \leqslant \bG^{\star}$. The group $\widetilde{\bT}_0 := \iota(\bT_0)Z(\widetilde{\bG})$ is then an $F$-stable maximal torus of $\widetilde{\bG}$. Recall that the regular embedding $\iota$ induces a surjective homomorphism $\iota^{\star} : \widetilde{\bG}^{\star} \to \bG^{\star}$ which is defined over $\mathbb{F}_q$. If $\widetilde{\bT}_0^{\star} \leqslant \widetilde{\bG}^{\star}$ is a torus dual to $\widetilde{\bT}_0$ then $\iota^{\star}(\widetilde{\bT}_0^{\star}) = \bT_0^{\star}$ and $\iota^{\star}$ is unique up to composing with an inner automorphism affected by an element of $\widetilde{\bT}_0^{\star}$.
\end{pa}

\begin{pa}\label{pa:dl-bijection}
We will denote by $\mathcal{C}(\bG,F)$ the set of all pairs $(\bT,\theta)$ consisting of an $F$-stable maximal torus $\bT \leqslant \bG$ and an irreducible character $\theta \in \Irr(\bT^F)$. Note we have an action of $G$ on $\nabla(\bG,F)$ defined by $g\cdot (\bT,\theta) = ({}^g\bT,{}^g\theta)$; we write $\mathcal{C}(\bG,F)/G$ for the orbits under this action and $[\bT,\theta]$ for the orbit containing $(\bT,\theta)$. Dually, we denote by $\mathcal{S}(\bG^\star,F^\star)$ the set of all pairs $(\bT^{\star},s)$ consisting of an $F^\star$-stable maximal torus $\bT^{\star} \leqslant \bG^{\star}$ and a semisimple element $s \in {\bT^{\star}}^{F^{\star}}$. Again we have an action of $G^{\star}$ on $\mathcal{S}(\bG^{\star},F^{\star})$ defined by $g\cdot (\bT^{\star},s) = ({}^g\bT^{\star},{}^gs)$, and we write $\mathcal{S}(\bG^{\star},F^{\star})/G^{\star}$ for the corresponding orbits and $[\bT^{\star},s]$ for the orbit containing $(\bT^{\star},s)$. By \cite[5.21.3]{deligne-lusztig:1976:representations-of-reductive-groups}, see also \cite[13.13]{digne-michel:1991:representations-of-finite-groups-of-lie-type}, we have a bijection
\begin{equation*}
\Pi : \mathcal{C}(\bG,F)/G \to \mathcal{S}(\bG^{\star},F^{\star})/G^{\star}
\end{equation*}
between these orbits. Note that this bijection depends on the choice of a group isomorphism $\imath : (\mathbb{Q}/\mathbb{Z})_{p'} \to \mathbb{K}^{\times}$ and an injective group homomorphism $\jmath : \mathbb{Q}/\mathbb{Z} \hookrightarrow \Ql^{\times}$, so we implicitly assume that such homomorphisms have been chosen.
\end{pa}

\begin{pa}
For any semisimple element $s \in G^{\star}$ we denote by $\mathcal{C}(\bG,F,s) \subseteq \mathcal{C}(\bG,F)$ the set of all pairs $(\bT,\theta)$ such that $\Pi([\bT,\theta]) = [\bT^\star,t]$ and $t$ is $G^{\star}$-conjugate to $s$. Now, to each pair $(\bT,\theta) \in \mathcal{C}(\bG,F)$, there is a corresponding Deligne--Lusztig character $R_{\bT}^{\bG}(\theta)$, and we denote by $\mathcal{E}(G,\bT,\theta)$ the set $\{\chi \in \Irr(G) \mid \langle \chi, R_{\bT}^{\bG}(\theta)\rangle_G \neq 0\}$ of its irreducible constituents. Note we will sometimes also write $R_{\bT^{\star}}^{\bG}(s)$ for $R_{\bT}^{\bG}(\theta)$ when $\Pi([\bT,\theta]) = [\bT^{\star},s]$. The union
\begin{equation*}
\mathcal{E}(G,s) = \bigcup_{(\bT,\theta) \in \mathcal{C}(\bG,F,s)} \mathcal{E}(G,\bT,\theta)
\end{equation*}
is, by definition, a rational Lusztig series. The set of all irreducible characters is then a disjoint union $\Irr(G) = \bigcup \mathcal{E}(G,s)$, where we run over all $G^{\star}$-conjugacy classes of semisimple elements, see \cite[11.8]{bonnafe:2006:sln}. If $H$ is a finite group and $x \in H$ is an element then we denote by $x_{\ell}$, resp., $x_{\ell'}$, the $\ell$-part, resp., $\ell'$-part, of $x = x_{\ell}x_{\ell'} = x_{\ell'}x_{\ell}$. With this we have the following.
\end{pa}

\begin{lemma}\label{lem:galoisLseries}
Let $s\in G^{\star}$ be a semisimple element and let $b, b' \in \mathbb{Z}$ be integers. If $\gamma\in\gal(\Q_{|G|}/\Q)$ is an automorphism such that $\gamma(\zeta) = \zeta^{\ell^b}$ for all $\ell'$-roots of unity and $\gamma(\zeta) = \zeta^{b'}$ for all $\ell$-roots of unity, then $\mathcal{E}(G, s)^\gamma = \mathcal{E}(G, s_\ell^{b'}s_{\ell'}^{\ell^b})$.
\end{lemma}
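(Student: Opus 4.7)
The plan is to reduce the statement to a compatibility between Galois action and Deligne--Lusztig induction. Since $\mathcal{E}(G,s)$ is by construction the union of the sets of irreducible constituents of the Deligne--Lusztig characters $R_{\bT}^{\bG}(\theta)$ attached to pairs $(\bT,\theta) \in \mathcal{C}(\bG,F,s)$, and $\gamma$ permutes $\Irr(G)$, it will suffice to track how $\gamma$ acts on each $R_{\bT}^{\bG}(\theta)$ and then translate the result to the semisimple side via the bijection $\Pi$.

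First I would prove the identity $R_{\bT}^{\bG}(\theta)^{\gamma} = R_{\bT}^{\bG}(\theta^{\gamma})$, where $\theta^{\gamma} \in \Irr(\bT^F)$ is defined by $\theta^{\gamma}(t) = \gamma(\theta(t))$. This follows from the character formula for Deligne--Lusztig characters (e.g.\ \cite[Proposition~12.2]{digne-michel:1991:representations-of-finite-groups-of-lie-type}): each value of $R_{\bT}^{\bG}(\theta)$ is a rational combination of products $Q^{\bG_{g_s}}_{h\bT h^{-1}}(g_u)\,\theta(h^{-1}g_s h)$, and the Green functions $Q$ are integer-valued. Hence $\gamma$ only alters the factor $\theta(h^{-1}g_s h)$, replacing it by $\theta^{\gamma}(h^{-1}g_s h)$, and yielding $R_{\bT}^{\bG}(\theta^{\gamma})(g)$.

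Next, the duality bijection $\Irr(\bT^F) \to \bT^{\star F^{\star}}$ arising from $\imath$ and $\jmath$ is a group isomorphism, so it respects integer powers and hence $\ell$-decompositions: if $\theta \leftrightarrow s$, then $\theta_\ell \leftrightarrow s_\ell$ and $\theta_{\ell'} \leftrightarrow s_{\ell'}$. Each value $\theta_\ell(t)$ is an $\ell$-power root of unity and each $\theta_{\ell'}(t)$ an $\ell'$-root of unity, so the hypothesis on $\gamma$ gives
\begin{equation*}
\theta^{\gamma}(t) \;=\; \gamma(\theta_\ell(t))\,\gamma(\theta_{\ell'}(t)) \;=\; \theta_\ell(t)^{b'}\,\theta_{\ell'}(t)^{\ell^b} \;=\; \bigl(\theta_\ell^{b'}\theta_{\ell'}^{\ell^b}\bigr)(t).
\end{equation*}
Thus $\theta^{\gamma}$ corresponds to $s_\ell^{b'} s_{\ell'}^{\ell^b}$, i.e.\ $(\bT, \theta^{\gamma}) \in \mathcal{C}(\bG, F, s_\ell^{b'} s_{\ell'}^{\ell^b})$.

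Combining the two steps, $\gamma$ carries the constituents of $R_{\bT}^{\bG}(\theta)$ bijectively onto those of $R_{\bT}^{\bG}(\theta^{\gamma})$, and as $(\bT,\theta)$ varies over $\mathcal{C}(\bG, F, s)$ the pair $(\bT,\theta^{\gamma})$ varies over $\mathcal{C}(\bG, F, s_\ell^{b'} s_{\ell'}^{\ell^b})$. Taking unions gives $\mathcal{E}(G,s)^{\gamma} \subseteq \mathcal{E}(G, s_\ell^{b'} s_{\ell'}^{\ell^b})$, and the reverse inclusion follows by the same argument applied to $\gamma^{-1}$. The main point requiring care is the first step, namely verifying that Galois conjugation commutes with $R_{\bT}^{\bG}$ on the full cyclotomic field $\mathbb{Q}_{|G|}$ rather than just on $\mathbb{Q}_{|\bT^F|}$; this is immediate from the integrality of the Green functions, after which the computation on the dual torus is purely formal.
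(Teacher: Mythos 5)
Your proof is correct and follows essentially the same route as the paper: commute $\gamma$ with $R_{\bT}^{\bG}$ using the character formula and the integrality of Green functions, transport the resulting shift $\theta \mapsto \theta^{\gamma}$ across the duality $\Irr(\bT^F) \cong \bT^{\star F^{\star}}$ to read off the new semisimple label, and then deduce equality of the two series. Your explicit computation of $\theta^{\gamma} = \theta_{\ell}^{b'}\theta_{\ell'}^{\ell^b}$ spells out what the paper calls "an easy consequence of the description of $\Pi$," and for the reverse inclusion the paper uses disjointness of rational Lusztig series rather than analysing $\gamma^{-1}$ directly, but these are cosmetic differences.
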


\begin{proof}
Assume $(\bT,\theta) \in \mathcal{C}(\bG,F)$.  Then by the character formula for $R_{\bT}^{\bG}(\theta)$ \cite[7.2.8]{carter:1993:finite-groups-of-lie-type}, and the fact that Green functions are integral valued, we easily deduce that $R_{\bT}^{\bG}(\theta)^{\gamma} = R_{\bT}^{\bG}(\theta^{\gamma})$. In particular, as $\gamma$ is an isometry we have $\mathcal{E}(\bG,\bT,\theta)^{\gamma} = \mathcal{E}(\bG,\bT,\theta^{\gamma})$. Now, if $\Pi([\bT,\theta]) = [\bT^{\star},s]$, then it is an easy consequence of the description of the map $\Pi$, see \cite[\S13]{digne-michel:1991:representations-of-finite-groups-of-lie-type}, and the definition of $\gamma$ that $\Pi([\bT,\theta^{\gamma}]) = [\bT^{\star}, s_\ell^{b'}s_{\ell'}^{\ell^b}]$. In particular this shows that $\mathcal{E}(G,s)^{\gamma} \subseteq \mathcal{E}(G,s_\ell^{b'}s_{\ell'}^{\ell^b})$. An almost identical argument shows that $\mathcal{E}(G,s) \subseteq \mathcal{E}(G,s_\ell^{b'}s_{\ell'}^{\ell^b})^{\gamma^{-1}} \subseteq \mathcal{E}(G,t)$ for some semisimple element $t \in G^{\star}$. However, by the disjointness of the rational series we must have equality which proves the lemma.
\end{proof}

\begin{pa}
For any irreducible character $\chi \in \Irr(G)$ we denote by $\omega_{\chi} : Z(G) \to \Ql^{\times}$ the central character determined by $\chi$. This is a linear character defined by $\omega_{\chi}(z) = \chi(z)/\chi(1)$ for any $z \in Z(G)$. The following will prove to be useful later; it follows from \cite[11.1(d)]{bonnafe:2006:sln}.
\end{pa}

\begin{lem}\label{lem:central-char}
For any two irreducible characters $\chi$, $\psi \in \mathcal{E}(G,s)$ we have $\omega_{\chi} = \omega_{\psi}$. In particular, if $\gamma\in\gal(\Q_{|G|}/\Q)$ is an automorphism and $\mathcal{E}(G,s)^{\gamma} = \mathcal{E}(G,s)$ then $\omega_{\chi}^{\gamma} = \omega_{\chi^{\gamma}} = \omega_{\chi}$ for all $\chi \in \mathcal{E}(G,s)$.
\end{lem}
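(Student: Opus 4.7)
The proof is short: the first assertion will be a citation and the second an elementary consequence. My plan is as follows.

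For the first assertion I would simply invoke Bonnafé's \cite[11.1(d)]{bonnafe:2006:sln}. That result states that the restriction to $Z(G)$ of any character $\chi \in \mathcal{E}(G,s)$ has the form $\chi(1)\lambda_s$, where $\lambda_s$ is a linear character of $Z(G)$ depending only on the $G^{\star}$-conjugacy class of $s$. Dividing by $\chi(1)$ gives $\omega_{\chi} = \lambda_s$ and hence $\omega_{\chi} = \omega_{\psi}$ for any $\chi, \psi \in \mathcal{E}(G,s)$.

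For the second assertion, the first step is to record the formal identity $\omega_{\chi}^{\gamma} = \omega_{\chi^{\gamma}}$. This comes from applying $\gamma$ to the defining equation $\omega_{\chi}(z) = \chi(z)/\chi(1)$ for $z \in Z(G)$ and using that $\chi(1) \in \mathbb{Z}$ is Galois-fixed and that $\chi^{\gamma}(1) = \chi(1)$. The second step is to note that the hypothesis $\mathcal{E}(G,s)^{\gamma} = \mathcal{E}(G,s)$ places $\chi^{\gamma}$ in the same rational Lusztig series as $\chi$, so the first assertion applied to the pair $(\chi, \chi^{\gamma})$ yields $\omega_{\chi} = \omega_{\chi^{\gamma}}$. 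Chaining these two equalities produces the claimed $\omega_{\chi}^{\gamma} = \omega_{\chi^{\gamma}} = \omega_{\chi}$.

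There is no genuine obstacle here; the work is entirely done by Bonnafé's result. The only point requiring any care is confirming that the cited statement really controls the central character on the full centre $Z(G)$ rather than on some proper subgroup such as $Z(\bG)^F$, but this is exactly the content of \cite[11.1(d)]{bonnafe:2006:sln}, which is formulated for precisely this purpose.
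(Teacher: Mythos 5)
Your proposal is correct and matches the paper, which simply asserts that the lemma follows from \cite[11.1(d)]{bonnafe:2006:sln}; your additional spelling-out of the identity $\omega_{\chi}^{\gamma} = \omega_{\chi^{\gamma}}$ and the application of the first assertion to the pair $(\chi,\chi^{\gamma})$ is exactly the intended elementary deduction.
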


\begin{pa}
Trying to understand the action of the Galois group on the elements of a rational Lusztig series is, in general, a difficult problem. However, in this section we will deal with two special cases. To describe these cases we need to introduce some notation. For $s \in G^{\star}$ a semisimple element, we denote by $\bT_s^{\star} \leqslant \bG^{\star}$ a fixed $F^{\star}$-stable maximal torus containing $s$; note that we then have $\bT_s^{\star}$ is contained in the centraliser $C_{\bG^{\star}}(s)$. We denote by $W^{\circ}(s) = N_{C_{\bG^{\star}}^{\circ}(s)}(\bT_s^{\star})/\bT_s^{\star}$ the Weyl group of the connected centraliser with respect to this maximal torus. For each $w \in W^{\circ}(s)$ we choose an $F^{\star}$-stable maximal torus $\bT_{s,w}^{\star} = {}^g\bT_s^{\star} \leqslant C_{\bG^{\star}}^{\circ}(s)$, where $g \in C_{\bG^{\star}}^{\circ}(s)$ is an element such that $g^{-1}F^{\star}(g) \in N_{C_{\bG^{\star}}^{\circ}(s)}(\bT_s^{\star})$ represents $w$. By \cite[15.11]{bonnafe:2006:sln} there then exists a sign such that
\begin{equation*}
\rho_s = \pm\frac{1}{|W^{\circ}(s)|}\sum_{w \in W^{\circ}(s)} R_{\bT_{s,w}^{\star}}^{\bG}(s),
\end{equation*}
is a character of $G$. Each irreducible constituent of this character is contained in the rational Lusztig series $\mathcal{E}(G,s)$ and is a semisimple character. Recall that a character is called semisimple if it is contained in the Alvis--Curtis dual of a Gelfand--Graev character, see \cite[8.8, 14.39]{digne-michel:1991:representations-of-finite-groups-of-lie-type}. We first consider the action of the Galois group on these characters.
\end{pa}

\begin{prop}\label{prop:semisimple-chars-sigma-fixed}
Let $\gamma$ be as in \cref{lem:galoisLseries} and assume $s \in G^{\star}$ is a semisimple element such that $\mathcal{E}(G,s)^{\gamma} = \mathcal{E}(G,s)$, then the following hold:
\begin{enumerate}
	\item $\rho_s$ is fixed by $\gamma$,
	\item every semisimple character contained in $\mathcal{E}(G,s)$ is fixed by $\gamma$ if every Gelfand--Graev character of $G$ is fixed by $\gamma$.
\end{enumerate}
\end{prop}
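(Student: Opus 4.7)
The plan is to attack both parts by applying $\gamma$ to the defining expression for $\rho_s$, using the identity $R_{\bT}^{\bG}(\theta)^{\gamma} = R_{\bT}^{\bG}(\theta^{\gamma})$ established in the proof of \cref{lem:galoisLseries} together with the fact noted there that, under duality, $\theta^{\gamma}$ corresponds to the twisted element $s^{\ast} := s_{\ell}^{b'}s_{\ell'}^{\ell^b}$. The central elementary observation is that, because $b'$ is coprime to the order of $s_{\ell}$ and $\ell^b$ is coprime to the order of $s_{\ell'}$, the elements $s$ and $s^{\ast}$ generate the same cyclic subgroup of $\bG^{\star}$; consequently $C_{\bG^{\star}}(s) = C_{\bG^{\star}}(s^{\ast})$, and the connected centralisers, Weyl groups $W^{\circ}$, families of reference tori, and the ambient sign used to define $\rho_s$ and $\rho_{s^{\ast}}$ all coincide after the obvious choice $\bT_{s^{\ast}}^{\star}:=\bT_s^{\star}$.

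For part (a) I would apply $\gamma$ termwise to the defining sum for $\rho_s$, obtaining
\[
\rho_s^{\gamma} = \pm \frac{1}{|W^{\circ}(s)|} \sum_{w \in W^{\circ}(s)} R_{\bT_{s,w}^{\star}}^{\bG}(s^{\ast}).
\]
By the observation above, this right-hand side is exactly $\rho_{s^{\ast}}$. The hypothesis $\mathcal{E}(G,s)^{\gamma} = \mathcal{E}(G,s)$ combined with \cref{lem:galoisLseries} forces $s^{\ast}$ to be $G^{\star}$-conjugate to $s$; since the $G$-character $\rho_{s'}$ depends only on the $G^{\star}$-class of $s'$, this delivers $\rho_s^{\gamma} = \rho_{s^{\ast}} = \rho_s$.

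For part (b) I would use Alvis--Curtis duality $D$, under which every semisimple character in $\mathcal{E}(G,s)$ is of the form $\pm D(\chi)$ for some regular character $\chi \in \mathcal{E}(G,s)$. Since $D$ is built from Harish-Chandra induction and restriction it commutes with $\gamma$, so it suffices to show that every regular character in $\mathcal{E}(G,s)$ is $\gamma$-fixed. Each Gelfand--Graev character $\Gamma$ of $G$ is multiplicity-free, and a standard structural fact in the Deligne--Lusztig theory is that $\Gamma$ contains at most one constituent from any given rational Lusztig series (see, e.g., \cite[Ch.~14]{digne-michel:1991:representations-of-finite-groups-of-lie-type}). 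Combining $\Gamma^{\gamma} = \Gamma$ with the $\gamma$-stability of $\mathcal{E}(G,s)$ forces the unique constituent of $\Gamma$ in $\mathcal{E}(G,s)$ (when it exists) to be $\gamma$-fixed, and since every regular character of $G$ arises in this way for exactly one choice of $\Gamma$, this captures every regular character in $\mathcal{E}(G,s)$.

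The main technical subtlety, as I see it, lies in the book-keeping for part (a) surrounding the collapse $\rho_s^{\gamma} = \rho_{s^{\ast}}$: one must check that the ambient sign and all choices of reference tori for $s^{\ast}$ can be matched with those for $s$, which is precisely the payoff of the coarser identity $C_{\bG^{\star}}^{\circ}(s) = C_{\bG^{\star}}^{\circ}(s^{\ast})$. The $G^{\star}$-conjugacy of $s$ and $s^{\ast}$ coming from the hypothesis then enters only at the final step, to identify $\rho_{s^{\ast}}$ with $\rho_s$.
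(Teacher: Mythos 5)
Your proof is correct, and for part~(b) it takes a genuinely dual route compared with the paper. The paper works on the semisimple side throughout: it combines part~(a) with the fact (Bonnaf\'e, [15.11]) that for each Gelfand--Graev character $\Gamma$ there is a \emph{unique} irreducible constituent $\chi$ of $\rho_s$ satisfying $\langle D_G(\Gamma),\chi\rangle_G\neq 0$, then observes that $\chi^{\gamma}$ is a constituent of $\rho_s^{\gamma}=\rho_s$ lying in $D_G(\Gamma^{\gamma})=D_G(\Gamma)$, so uniqueness forces $\chi^{\gamma}=\chi$. You instead work on the regular side: each $\Gamma$ contains at most one constituent from $\mathcal{E}(G,s)$, so $\Gamma^{\gamma}=\Gamma$ together with $\mathcal{E}(G,s)^{\gamma}=\mathcal{E}(G,s)$ pins down that constituent, and you then transfer to semisimple characters by applying $D_G$. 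The two uniqueness facts you each invoke are Alvis--Curtis duals of one another, so the substance is the same, but your formulation has the small structural advantage of making part~(b) logically independent of part~(a): you only need $\gamma$-stability of the series, not $\gamma$-invariance of $\rho_s$. For part~(a) your bookkeeping is essentially what the paper has in mind, though you are a shade more careful: the paper asserts the termwise identity $R_{\bT_{s,w}^{\star}}^{\bG}(s)^{\gamma}=R_{\bT_{s,w}^{\star}}^{\bG}(s)$ directly, while you first rewrite $\rho_s^{\gamma}$ as $\rho_{s^{\ast}}$ (using $C_{\bG^{\star}}^{\circ}(s)=C_{\bG^{\star}}^{\circ}(s^{\ast})$ to match Weyl groups, tori, and sign) and only then invoke the $G^{\star}$-conjugacy of $s$ and $s^{\ast}$, which sidesteps any worry about whether $(\bT_{s,w}^{\star},s)$ and $(\bT_{s,w}^{\star},s^{\ast})$ lie in the same $G^{\star}$-orbit term by term.
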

\begin{proof}
If $\mathcal{E}(G,s)^{\gamma} = \mathcal{E}(G,s)$, then we have $s$ is $G^{\star}$-conjugate to $s_\ell^{b'}s_{\ell'}^{\ell^b}$. From the arguments in the proof of \cref{lem:galoisLseries} it is clear that, under this assumption, we have $R_{\bT_{s,w}^{\star}}^{\bG}(s)^{\gamma} = R_{\bT_{s,w}^{\star}}^{\bG}(s)$ so clearly $\rho_s$ is fixed by $\gamma$. Now, if $\Gamma$ is a Gelfand--Graev character of $G$ and $D_G$ denotes Alvis--Curtis duality, see \cite[8.8]{digne-michel:1991:representations-of-finite-groups-of-lie-type}, then there exists a unique irreducible constituent $\chi$ of $\rho_s$ such that $\langle D_G(\Gamma), \chi\rangle_G \neq 0$, see \cite[15.11]{bonnafe:2006:sln}. Certainly we have $\chi^{\gamma}$ is both a constituent of $\rho_s^{\gamma} = \rho_s$ and $D_G(\Gamma)^{\gamma}$. From the definition of $D_G$, and the character formula for Harish-Chandra induction/restriction \cite[4.5]{digne-michel:1991:representations-of-finite-groups-of-lie-type}, it is not difficult to see that $D_G(\Gamma)^{\gamma} = D_G(\Gamma^{\gamma})$. Hence, if $\Gamma^{\gamma} = \Gamma$ then we must have $\chi^{\gamma}$ is a constituent of $D_G(\Gamma)$; but this implies $\chi^{\gamma} = \chi$ by the uniqueness.
\end{proof}

\begin{pa}\label{pa:almost-chars}
The next case we wish to consider is that of $\GL_n(\mathbb{K})$.  First, we introduce some notation that will be useful later. Specifically, let $s \in G^{\star}$ be a semisimple element. Then the Frobenius $F^{\star}$ induces an automorphism $F^{\star} : W^{\circ}(s) \to W^{\circ}(s)$ because $\bT_s^{\star}$ is assumed to be $F^{\star}$-stable. We denote by $\widetilde{W}^{\circ}(s)$ the semidirect product $W^{\circ}(s) \rtimes\langle F^{\star} \rangle$ and for any class function $f : \widetilde{W}^{\circ}(s) \to \Ql$ we define a corresponding class function
\begin{equation*}
R^G_f(s) = \frac{1}{|W^{\circ}(s)|}\sum_{w \in W^{\circ}(s)} f(wF^{\star})R_{\bT_{s,w}^{\star}}^{\bG}(s)
\end{equation*}
of $G$. With this we can prove the following.
\end{pa}

\begin{prop}\label{prop:GLn-sigma-fixed}
Assume $\bG$ is $\GL_n(\mathbb{K})$, $\gamma$ is as in \cref{lem:galoisLseries}, and $s \in G^{\star}$ is a semisimple element such that $\mathcal{E}(G,s)^{\gamma} = \mathcal{E}(G,s)$.  Then every $\chi \in \mathcal{E}(G,s)$ is fixed by $\gamma$.
\end{prop}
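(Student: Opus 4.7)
The plan is to express each $\chi \in \mathcal{E}(G,s)$ in the form $\chi = \sigma_\chi R^G_{\tilde\phi}(s)$ for a sign $\sigma_\chi \in \{\pm 1\}$ and an extension $\tilde\phi$ of an $F^{\star}$-invariant irreducible character $\phi \in \irr(W^{\circ}(s))$ whose values on elements $wF^{\star} \in \widetilde{W}^{\circ}(s) \setminus W^{\circ}(s)$ are rational. Given such a description, the $\gamma$-invariance of $\chi$ follows immediately by combining this rationality with the $\gamma$-invariance of each $R_{\bT_{s,w}^{\star}}^{\bG}(s)$, which was verified in the proof of \cref{prop:semisimple-chars-sigma-fixed} under the hypothesis $\mathcal{E}(G,s)^{\gamma} = \mathcal{E}(G,s)$.

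To obtain such an expression, I would appeal to Lusztig's Jordan decomposition of characters. Since $\bG = \GL_n(\mathbb{K})$, the centraliser $C_{\bG^{\star}}(s)$ is connected, being a Levi subgroup of $\bG^{\star}$, and consequently (see, for example, \cite[Chapter 15]{bonnafe:2006:sln}) the series $\mathcal{E}(G,s)$ is in bijection with the set of $F^{\star}$-stable irreducible characters of $W^{\circ}(s)$, with each $\chi$ arising in the stated form for a canonical, or \emph{preferred}, extension $\tilde\phi$ of the associated $\phi$.

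Next, I would establish rationality of $\tilde\phi$ on the coset $W^{\circ}(s) F^{\star}$ by using the explicit structure of $\widetilde{W}^{\circ}(s)$: grouping the eigenvalues of $s$ into their $F^{\star}$-orbits, it becomes a direct product of wreath products of the form $S_m \wr C_d$, the cyclic factor being generated on each block by $F^{\star}$. On each such wreath product, the preferred extension of an $F^{\star}$-invariant irreducible character evaluated at $wF^{\star}$ coincides, via a Shintani-type descent formula, with the value of the underlying symmetric group character at a product of the entries of $w$, and is therefore an integer. The main obstacle is this last step: identifying the preferred extension appearing in Lusztig's parametrization with the classical preferred extension on the wreath product and verifying its integer-valuedness---which is well-known in type $A$, but requires unpacking the parametrization and the Shintani correspondence carefully.
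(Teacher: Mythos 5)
Your proposal follows essentially the same route as the paper: write $\chi = R^G_f(s)$ with $f$ a rational-valued extension to $\widetilde{W}^{\circ}(s)$, then conclude from the $\gamma$-invariance of each $R_{\bT_{s,w}^{\star}}^{\bG}(s)$. The only difference is that the paper simply cites Lusztig \cite[3.2, 4.23]{lusztig:1984:characters-of-reductive-groups} for the rationality of the preferred extension, whereas you sketch a direct verification via the wreath-product structure of $\widetilde{W}^{\circ}(s)$; that verification is correct (and the sign $\sigma_\chi$ you allow for is in fact always $+1$ in type $\A$), but it amounts to reproving the cited fact rather than taking a different path.
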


\begin{proof}
By \cite[3.2, 4.23]{lusztig:1984:characters-of-reductive-groups} every irreducible character in the Lusztig series $\mathcal{E}(G,s)$ is of the form $R^G_f(s)$ where $f : \widetilde{W}^{\circ}(s) \to \Ql$ is a rational valued irreducible character, see also \cite[13.25(ii), \S15.4]{digne-michel:1991:representations-of-finite-groups-of-lie-type}. The statement now follows immediately from the fact that each $R_{\bT_{s,w}^{\star}}^{\bG}(s)$ is fixed by $\gamma$, c.f., the proof of \cref{prop:semisimple-chars-sigma-fixed}.
\end{proof}

\section{GGGRs and Galois Automorphisms}\label{sec:GGGRs}
\begin{assumption}
In this section, and in this section only, we assume that $p$ is a good prime for $\bG$ and that $\bG$ is a proximate algebraic group in the sense of \cite[2.10]{taylor:2016:GGGRs-small-characteristics}. Recall that this means some (any) simply connected covering of the derived subgroup of $\bG$ is seperable.
\end{assumption}

\begin{pa}\label{pa:GGGR-rat-conj}
To any unipotent element $u \in G$ Kawanaka has defined a corresponding generalised Gelfand--Graev representation (GGGR) of $G$ which we denote $\Gamma_u$, see \cite{kawanaka:1985:GGGRs-and-ennola-duality,taylor:2016:GGGRs-small-characteristics}. If $u$ is a regular unipotent element then $\Gamma_u$ is a Gelfand--Graev character. Moreover, we have $\Gamma_{gug^{-1}} = \Gamma_u$ for any $g \in G$. In this section we wish to determine the effect of $\sigma$ on the GGGRs of $G$; for this we must recall their construction. Let $\lie{g}$ denote the Lie algebra of $\bG$ and let $\mathcal{N} \subseteq \mathfrak{g}$, resp., $\mathcal{U} \subseteq \bG$, denote the nilpotent cone of $\mathfrak{g}$, resp., the unipotent variety of $\bG$. The Frobenius endomorphism $F : \bG \to \bG$ induces a corresponding Frobenius endomorphism $F : \lie{g} \to \lie{g}$ on the Lie algebra. We have $F(\mathcal{U}) = \mathcal{U}$ and $F(\mathcal{N}) = \mathcal{N}$.
\end{pa}

\begin{pa}
Let $\mathbb{G}_m$ denote the set $\mathbb{K} \setminus \{0\}$ viewed as a multiplicative algebraic group and let $\widecheck{X}(\bG) = \Hom(\mathbb{G}_m,\bG)$ be the set of all cocharacters of $\bG$. Let $F_q : \mathbb{G}_m \to \mathbb{G}_m$ denote the Frobenius endomorphism given by $F_q(k) = k^q$, with $q$ as in \cref{pa:basic-setup}. Then for any $\lambda \in \widecheck{X}(\bG)$ we define a new cocharacter $F\cdot\lambda \in \widecheck{X}(\bG)$ by setting
\begin{equation*}
(F\cdot\lambda)(k) = F(\lambda(F_q^{-1}(k)))
\end{equation*}
for all $k \in \mathbb{G}_m$. We denote by $\widecheck{X}(\bG)^F \subseteq \widecheck{X}(\bG)$ the set of all cocharacters $\lambda$ satisfying $F\cdot\lambda = \lambda$.
\end{pa}

\begin{pa}
To each cocharacter $\lambda \in \widecheck{X}(\bG)$ we have a corresponding parabolic subgroup $\bP(\lambda) \leqslant \bG$ with unipotent radical $\bU(\lambda) \leqslant \bP(\lambda)$ and Levi complement $\bL(\lambda) = C_{\bG}(\lambda(\mathbb{G}_m))$, see \cite[3.2.15, 8.4.5]{springer:2009:linear-algebraic-groups}. The group $\bG$ acts on $\lie{g}$ via the adjoint representation $\Ad : \bG \to \GL(\lie{g})$. Through $\Ad$ we have each cocharacter $\lambda$ defines a $\mathbb{Z}$-grading $\lie{g} = \bigoplus_{i \in \mathbb{Z}} \lie{g}(\lambda,i)$ on the Lie algebra. For any $i > 0$ we have $\lie{u}(\lambda,i) = \bigoplus_{j \geqslant i} \lie{g}(\lambda,j)$ is a subalgebra of the Lie algebra of $\bU(\lambda)$ and it is the Lie algebra of a closed connected subgroup $\bU(\lambda,i) \leqslant \bU(\lambda)$ which is normal in $\bP(\lambda)$. The group $\bL(\lambda)$ preserves each weight space $\lie{g}(\lambda,i)$ and we denote by $\lie{g}(\lambda,2)_{\mathrm{reg}} \subseteq \lie{g}(\lambda,2)$ the unique open dense orbit of $\bL(\lambda)$ acting on $\lie{g}(\lambda,2)$. Note that if $\lambda \in \widecheck{X}(\bG)^F$ then the subgroups $\bP(\lambda)$, $\bU(\lambda)$, $\bU(\lambda,i)$, and $\bL(\lambda)$ are all $F$-stable and we set $P(\lambda) = \bP(\lambda)^F$, $U(\lambda) = \bU(\lambda)^F$, $U(\lambda,i) = \bU(\lambda,i)^F$, and $L(\lambda) = \bL(\lambda)^F$.
\end{pa}

\begin{pa}
The action of $\bG$ on $\lie{g}$ preserves $\mathcal{N}$ and the action of $\bG$ on itself by conjugation preserves $\mathcal{U}$; we denote the resulting sets of orbits by $\mathcal{N}/\bG$ and $\mathcal{O}/\bG$. Recall that each nilpotent orbit $\mathcal{O} \in \mathcal{N}/\bG$ is of the form $\mathcal{O} = (\Ad\bG)\lie{g}(\lambda,2)_{\mathrm{reg}}$ for some $\lambda \in \widecheck{X}(\bG)$, see \cite[3.22]{taylor:2016:GGGRs-small-characteristics}. Moreover, if $\mathcal{O}$ is $F$-stable then we may assume that $\lambda \in \widecheck{X}(\bG)^F$, see \cite[3.25]{taylor:2016:GGGRs-small-characteristics}. Following \cite[\S4, \S5]{taylor:2016:GGGRs-small-characteristics} we assume a chosen $\bG$-equivariant isomorphism of varieties $\phi_{\mathrm{spr}} : \mathcal{U} \to \mathcal{N}$ which commutes with $F$ and whose restriction to each $\bU(\lambda)$ is a Kawanaka isomorphism. In particular, the map $\phi_{\mathrm{spr}}$ satisfies the following two properties:
\begin{enumerate}[label=(K\arabic*)]
	\item $\phi_{\mathrm{spr}}(\bU(\lambda,2)) \subseteq \lie{u}(\lambda,2)$,
	\item $\phi_{\mathrm{spr}}(uv) - \phi_{\mathrm{spr}}(u) - \phi_{\mathrm{spr}}(v) \in \mathfrak{u}(\lambda,3)$ for any $u,v \in \bU(\lambda,2)$.
\end{enumerate}
Note also that $\phi_{\mathrm{spr}}$ induces a bijection $\mathcal{U}/\bG \to \mathcal{N}/\bG$. Before introducing the GGGRs we consider the following lemmas, which were not covered in \cite{taylor:2016:GGGRs-small-characteristics}.
\end{pa}

\begin{lem}\label{lem:kaw-iso-eq}
For each cocharacter $\lambda \in \widecheck{X}(\bG)$ we have $\phi_{\mathrm{spr}}(\bU(\lambda,2)) = \mathfrak{u}(\lambda,2)$.
\end{lem}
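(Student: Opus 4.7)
The plan is to upgrade the containment in (K1) to an equality using a dimension argument, exploiting the fact that $\phi_{\mathrm{spr}} : \mathcal{U} \to \mathcal{N}$ is a $\bG$-equivariant \emph{isomorphism} of varieties (not merely a morphism).

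First I would observe that $\mathfrak{u}(\lambda,2) \subseteq \mathcal{N}$, since $\mathfrak{u}(\lambda,2)$ lies inside the Lie algebra of the unipotent radical $\bU(\lambda)$, and all elements of that Lie algebra are nilpotent. Next, since $\bU(\lambda,2)$ is a closed subgroup of $\bG$, it is closed in $\mathcal{U}$, and because $\phi_{\mathrm{spr}}$ is an isomorphism of varieties (hence a homeomorphism), the image $\phi_{\mathrm{spr}}(\bU(\lambda,2))$ is closed in $\mathcal{N}$, and therefore closed in the linear subspace $\mathfrak{u}(\lambda,2)$ of $\mathfrak{g}$.

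For the dimension count: $\bU(\lambda,2)$ is a closed connected subgroup, hence irreducible, and its Lie algebra is $\mathfrak{u}(\lambda,2)$, so
\begin{equation*}
\dim \bU(\lambda,2) = \dim \mathfrak{u}(\lambda,2).
\end{equation*}
Because $\phi_{\mathrm{spr}}$ is an isomorphism of varieties, its restriction to $\bU(\lambda,2)$ is an isomorphism onto its image, so $\dim \phi_{\mathrm{spr}}(\bU(\lambda,2)) = \dim \bU(\lambda,2) = \dim \mathfrak{u}(\lambda,2)$. Since $\mathfrak{u}(\lambda,2)$ is irreducible (being an affine space) and $\phi_{\mathrm{spr}}(\bU(\lambda,2))$ is an irreducible closed subvariety of it with the full dimension, the two must coincide.

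I do not expect any serious obstacle: the only conceivable subtlety is checking that $\phi_{\mathrm{spr}}(\bU(\lambda,2))$ really is closed in $\mathfrak{u}(\lambda,2)$, but this is immediate from $\phi_{\mathrm{spr}}$ being a homeomorphism onto $\mathcal{N}$ together with $\mathfrak{u}(\lambda,2) \subseteq \mathcal{N}$. The whole argument should fit in a few lines and relies only on the hypotheses already listed in the passage preceding (K1)--(K2).
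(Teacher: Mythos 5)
Your proof is correct and takes essentially the same approach as the paper: both deduce from (K1) and the fact that $\phi_{\mathrm{spr}}$ is an isomorphism that the image is a closed subset of $\mathfrak{u}(\lambda,2)$ of full dimension, then conclude by irreducibility. You merely spell out a couple of steps the paper leaves implicit (that $\mathfrak{u}(\lambda,2)\subseteq\mathcal{N}$ so closedness transfers, and that $\dim\bU(\lambda,2)=\dim\mathfrak{u}(\lambda,2)$).
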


\begin{proof}
As $\phi_{\mathrm{spr}}$ is an isomorphism we have $\phi_{\mathrm{spr}}(\bU(\lambda,2))$ is a closed subset of the same dimension as $\mathfrak{u}(\lambda,2)$. As $\mathfrak{u}(\lambda,2)$ is irreducible we must have $\phi_{\mathrm{spr}}(\bU(\lambda,2)) = \mathfrak{u}(\lambda,2)$.
\end{proof}

\begin{lem}\label{lem:P-conj}
Assume $\mathcal{O} \in \mathcal{U}/\bG$ is such that $\phi_{\mathrm{spr}}(\mathcal{O}) = (\Ad\bG)\lie{g}(\lambda,2)_{\mathrm{reg}}$ for some cocharacter $\lambda \in \widecheck{X}(\bG)$. Then $\mathcal{O} \cap \bU(\lambda,2)$ is an open dense subset of $\bU(\lambda,2)$ and is a single $\bP(\lambda)$-conjugacy class.
\end{lem}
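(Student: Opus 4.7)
The plan is to transfer the statement to the Lie algebra side via the Kawanaka isomorphism $\phi_{\mathrm{spr}}$ and then invoke the analogous, well-known fact for nilpotent orbits. By \cref{lem:kaw-iso-eq} the restriction of $\phi_{\mathrm{spr}}$ to $\bU(\lambda,2)$ is an isomorphism of varieties onto $\mathfrak{u}(\lambda,2)$, and since $\phi_{\mathrm{spr}}$ is $\bG$-equivariant this restriction is in particular $\bP(\lambda)$-equivariant. Consequently
\[
\phi_{\mathrm{spr}}\bigl(\mathcal{O} \cap \bU(\lambda,2)\bigr) = \phi_{\mathrm{spr}}(\mathcal{O}) \cap \mathfrak{u}(\lambda,2) = (\Ad\bG)\mathfrak{g}(\lambda,2)_{\mathrm{reg}} \cap \mathfrak{u}(\lambda,2),
\]
with $\bP(\lambda)$-orbits corresponding under this equality. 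Thus it is enough to prove the analogue for the nilpotent orbit $\mathcal{O}' = (\Ad\bG)\mathfrak{g}(\lambda,2)_{\mathrm{reg}}$, namely that $\mathcal{O}' \cap \mathfrak{u}(\lambda,2)$ is an open dense subset of $\mathfrak{u}(\lambda,2)$ consisting of a single $\bP(\lambda)$-orbit.

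For this Lie algebra statement I would fix any $e \in \mathfrak{g}(\lambda,2)_{\mathrm{reg}} \subseteq \mathcal{O}' \cap \mathfrak{u}(\lambda,2)$, so the intersection is nonempty, and consider the orbit map $\bP(\lambda) \to \mathcal{O}' \cap \mathfrak{u}(\lambda,2)$, $p \mapsto \Ad(p)(e)$. Premet's cocharacter theory of nilpotent elements in good characteristic, available because $p$ is good and $\bG$ is proximate, supplies the surjectivity of $\ad(e) : \mathfrak{g}(\lambda,i) \to \mathfrak{g}(\lambda,i+2)$ for $i \geqslant -1$ (the characteristic-$p$ replacement of the $\mathfrak{sl}_2$-representation theory). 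A direct weight-by-weight dimension count then yields the identity $\dim \bP(\lambda) - \dim C_{\bP(\lambda)}(e) = \dim \mathfrak{u}(\lambda,2)$, so the orbit map is dominant. Hence the $\bP(\lambda)$-orbit of $e$ is open in $\mathfrak{u}(\lambda,2)$; irreducibility of $\mathfrak{u}(\lambda,2)$ forces it to be dense, and any second $\bP(\lambda)$-orbit in $\mathcal{O}' \cap \mathfrak{u}(\lambda,2)$ would have to meet the first, yielding uniqueness.

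Transferring back through $\phi_{\mathrm{spr}}$ finishes the proof, since $\phi_{\mathrm{spr}}|_{\bU(\lambda,2)}$ is a $\bP(\lambda)$-equivariant homeomorphism. The main obstacle is really just the dimension identity: in characteristic zero it is immediate from the Jacobson--Morozov theorem, but in good positive characteristic one must invoke Premet's replacement for the $\mathfrak{sl}_2$-triple through $e$. Once that identity is cited, the remaining content of the lemma is a formal transfer along the Kawanaka isomorphism.
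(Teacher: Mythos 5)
Your overall strategy mirrors the paper's: both proofs come down to the density of $(\Ad\bP(\lambda))e$ in $\lie{u}(\lambda,2)$, the dimension identity $\dim\bP(\lambda) - \dim C_{\bG}(e) = \dim\lie{u}(\lambda,2)$, and a transfer across the Kawanaka isomorphism via \cref{lem:kaw-iso-eq}. You push everything to the Lie algebra side before arguing, whereas the paper stays mostly on the unipotent side and imports the Lie algebra fact $\overline{(\Ad\bP(\lambda))e} = \lie{u}(\lambda,2)$ by citation; you re-derive that density from Premet's surjectivity of $\ad(e)$, which is fine and ultimately the same input (the citation \cite[3.22]{taylor:2016:GGGRs-small-characteristics} rests on Premet's work).

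The genuine gap is in the uniqueness step. You assert that ``any second $\bP(\lambda)$-orbit in $\mathcal{O}' \cap \lie{u}(\lambda,2)$ would have to meet the first,'' but this does not follow from density of $\bP(\lambda)\cdot e$ alone: a dense open subset of an irreducible variety need not meet an arbitrary locally closed subset, and a second $\bP(\lambda)$-orbit is not a priori open. What closes the gap --- and what the paper does --- is to run the dimension count again for an arbitrary $v \in \mathcal{O}'\cap\lie{u}(\lambda,2)$. Since $v$ is $\bG$-conjugate to $e$, we have $\dim C_{\bG}(v) = \dim C_{\bG}(e)$, and since $C_{\bP(\lambda)}(v)\subseteq C_{\bG}(v)$,
\begin{equation*}
\dim\bigl(\bP(\lambda)\cdot v\bigr) \;=\; \dim\bP(\lambda) - \dim C_{\bP(\lambda)}(v) \;\geqslant\; \dim\bP(\lambda) - \dim C_{\bG}(e).
\end{equation*}
To identify the right-hand side with $\dim\lie{u}(\lambda,2)$ you need the additional Premet fact that $C_{\bG}(e)\subseteq\bP(\lambda)$, so that $\dim C_{\bG}(e) = \dim C_{\bP(\lambda)}(e)$; note this is \emph{not} automatic from the identity you state, which only involves $C_{\bP(\lambda)}(e)$. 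With that in hand, $\bP(\lambda)\cdot v$ has full dimension inside the irreducible variety $\lie{u}(\lambda,2)$, hence is open dense, hence meets $\bP(\lambda)\cdot e$, forcing equality of orbits. You should also flag that the passage from the Lie algebra identity $\dim\lie{p}(\lambda) - \dim\lie{c}_{\lie{p}(\lambda)}(e) = \dim\lie{u}(\lambda,2)$ to the group-theoretic version $\dim\bP(\lambda) - \dim C_{\bP(\lambda)}(e) = \dim\lie{u}(\lambda,2)$ uses smoothness of the centraliser, which is another output of Premet's theory under the good-characteristic/proximate hypotheses --- worth making explicit since it is exactly the kind of point the section's standing assumptions exist to handle.
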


\begin{proof}
Choose an element $e \in \lie{g}(\lambda,2)_{\mathrm{reg}}$ and let $u \in \mathcal{U}$ be the unique unipotent element satisfying $\phi_{\mathrm{spr}}(u) = e$. By \cref{lem:kaw-iso-eq} we have $u \in \bU(\lambda,2)$ so the $\bP(\lambda)$-conjugacy class $\mathcal{O}_{\bP(\lambda)}$ containing $u$ is contained in $\mathcal{O} \cap\bU(\lambda,2) \subseteq \bU(\lambda,2)$. We thus clearly have a corresponding sequence of closed sets
\begin{equation*}
\overline{\mathcal{O}_{\bP(\lambda)}} \subseteq \overline{\mathcal{O}\cap \bU(\lambda,2)} \subseteq \overline{\mathcal{O}}\cap \bU(\lambda,2) \subseteq \bU(\lambda,2).
\end{equation*}
According to \cite[3.22(ii.b)]{taylor:2016:GGGRs-small-characteristics} we have $\overline{\phi_{\mathrm{spr}}(\mathcal{O}_{\bP(\lambda)})} = \overline{(\Ad\bP(\lambda))e} = \mathfrak{u}(\lambda,2)$. As $\phi_{\mathrm{spr}}$ is an isomorphism it follows from \cref{lem:kaw-iso-eq} that $\overline{\mathcal{O}_{\bP(\lambda)}} = \bU(\lambda,2)$ so all of these containments above must be equalities. This certainly shows $\mathcal{O} \cap \bU(\lambda,2)$ is dense and as $\mathcal{O}$ is open in $\overline{\mathcal{O}}$ we have the intersection is also open.

Let $v \in \mathcal{O} \cap\bU(\lambda,2)$ be another element in the intersection and denote by $\mathcal{O}' \subseteq \mathcal{O} \cap\bU(\lambda,2)$ the $\bP(\lambda)$-conjugacy class containing $v$. As $v$ is $\bG$-conjugate to $u$ we have $\dim C_{\bG}(v) = \dim C_{\bG}(u)$ so
\begin{equation*}
\dim\mathcal{O}' = \dim\bP(\lambda) - \dim C_{\bP(\lambda)}(v) \geqslant \dim\bP(\lambda) - \dim C_{\bG}(u) = \dim\bU(\lambda,2),
\end{equation*}
where the last equality follows from \cite[3.22(ii)]{taylor:2016:GGGRs-small-characteristics}. As $\mathcal{O}' \subseteq \bU(\lambda,2)$ we must have $\dim \mathcal{O}' = \dim\bU(\lambda,2)$ so $\overline{\mathcal{O}'} = \bU(\lambda,2)$, because $\bU(\lambda,2)$ is irreducible, and $\mathcal{O}'$ is also a dense open subset of $\bU(\lambda,2)$. Again, as $\bU(\lambda,2)$ is irreducible this implies $\mathcal{O}_{\bP(\lambda)} \cap \mathcal{O}' \neq \emptyset$ which shows $\mathcal{O}_{\bP(\lambda)} = \mathcal{O} \cap \bU(\lambda,2)$.
\end{proof}

\begin{cor}\label{cor:rat-P-conj}
Let $u \in \mathcal{U}^F$ be a rational unipotent element and let $\mathcal{O} \in \mathcal{U}/\bG$ be the $F$-stable class containing $u$. If $\lambda \in \widecheck{X}(\bG)^F$ is such that $\phi_{\mathrm{spr}}(\mathcal{O}) = (\Ad\bG)\lie{g}(\lambda,2)_{\mathrm{reg}}$ then any element contained in $\mathcal{O}\cap U(\lambda,2)$ is of the form ${}^{hl}u$ with $h \in U(\lambda)$ and $l \in \bL(\lambda)$.
\end{cor}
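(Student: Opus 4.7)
By \cref{lem:P-conj}, $\mathcal{O} \cap \bU(\lambda,2)$ is a single $\bP(\lambda)$-orbit, so there exists $p_0 \in \bP(\lambda)$ with $v = {}^{p_0}u$; write $p_0 = h_0 l_0$ via the Levi decomposition $\bP(\lambda) = \bU(\lambda) \rtimes \bL(\lambda)$. The set of elements conjugating $u$ to $v$ is the coset $p_0 C_{\bP(\lambda)}(u)$, which is $F$-stable since $u$ and $v$ are $F$-fixed. The strategy is to apply Lang's theorem to find an element $p \in p_0 C_{\bP(\lambda)}(u)$ whose unipotent part under the Levi decomposition lies in $U(\lambda)$, yielding the required decomposition $v = {}^{hl}u$.

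The key structural input is a semidirect product decomposition $C_{\bP(\lambda)}(u) = C_{\bU(\lambda)}(u) \rtimes C_{\bL(\lambda)}(u)$ with $C_{\bU(\lambda)}(u)$ connected. Both assertions follow, under our standing good-characteristic and proximate hypotheses, from the Richardson property of $u \in \bU(\lambda,2)$: one first verifies them for a canonical representative $u_0 \in \mathcal{O} \cap \bU(\lambda,2)$ with $\phi_{\mathrm{spr}}(u_0) \in \mathfrak{g}(\lambda,2)_{\mathrm{reg}}$, where projecting the equation $\Ad(l)\phi_{\mathrm{spr}}(u_0) = \Ad(h^{-1})\phi_{\mathrm{spr}}(u_0)$ (for $c = hl \in C_{\bP(\lambda)}(u_0)$) along the direct sum $\mathfrak{u}(\lambda,2) = \mathfrak{g}(\lambda,2) \oplus \mathfrak{u}(\lambda,3)$ forces $\Ad(l)\phi_{\mathrm{spr}}(u_0) = \phi_{\mathrm{spr}}(u_0)$, and hence $l \in C_{\bL(\lambda)}(u_0)$ and $h \in C_{\bU(\lambda)}(u_0)$. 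The general case follows by $\bP(\lambda)$-conjugation via \cref{lem:P-conj}. Writing $\pi : \bP(\lambda) \to \bU(\lambda)$, $hl \mapsto h$, for the Levi projection, a direct calculation then gives $\pi(p_0 C_{\bP(\lambda)}(u)) = h_0 \cdot l_0 C_{\bU(\lambda)}(u) l_0^{-1}$, which is an $F$-stable coset in $\bU(\lambda)$ of the connected closed subgroup $l_0 C_{\bU(\lambda)}(u) l_0^{-1}$.

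Lang's theorem applied to the connected group $l_0 C_{\bU(\lambda)}(u) l_0^{-1}$ produces an $F$-fixed $h \in U(\lambda)$ in this coset; any lift $p \in p_0 C_{\bP(\lambda)}(u)$ with $\pi(p) = h$ has the form $p = hl$ for some $l \in \bL(\lambda)$ and satisfies $v = {}^{hl}u$. The main obstacle is the verification of the semidirect decomposition of $C_{\bP(\lambda)}(u)$ together with the connectedness of $C_{\bU(\lambda)}(u)$; both are standard for the canonical Richardson representative in our good-characteristic setting and transfer to general $u \in \mathcal{O} \cap \bU(\lambda,2)$ via $\bP(\lambda)$-conjugation.
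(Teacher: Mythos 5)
Your proposal is correct and establishes the result, but it takes a genuinely different route from the paper. The paper's proof works in the ``Levi direction'': it parametrises the rational $P(\lambda)$-classes inside $(\mathcal{O}\cap\bU(\lambda,2))^F$ by $F$-conjugacy classes of the component group $A_{\bP(\lambda)}(u)$, uses the isomorphism $A_{\bL(\lambda)}(u)\to A_{\bP(\lambda)}(u)$ (which follows from the semidirect decomposition of $C_{\bP(\lambda)}(u)$ and the connectedness of $C_{\bU(\lambda)}(u)$, cited from Premet via [Taylor, 3.22]), and then applies Lang--Steinberg to the \emph{Levi} $\bL(\lambda)$ to produce $l_1\in L(\lambda)$ with ${}^{l_1}u$ rationally $P(\lambda)$-conjugate to $v$, finishing with $P(\lambda)=U(\lambda)\rtimes L(\lambda)$. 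You instead work in the ``unipotent direction'': you keep track of the full $F$-stable coset $p_0C_{\bP(\lambda)}(u)$ of conjugators, push it forward along the Levi projection $\pi:\bP(\lambda)\to\bU(\lambda)$ (which is a morphism commuting with $F$), observe that $\pi(p_0C_{\bP(\lambda)}(u))=h_0\cdot l_0C_{\bU(\lambda)}(u)l_0^{-1}$ is an $F$-stable coset of a connected closed subgroup of $\bU(\lambda)$, and apply Lang there to produce the rational $\bU(\lambda)$-part $h$ directly, lifting back arbitrarily to get $l\in\bL(\lambda)$. Both arguments rest on the same structural input---$C_{\bP(\lambda)}(u)=C_{\bL(\lambda)}(u)\ltimes C_{\bU(\lambda)}(u)$ with $C_{\bU(\lambda)}(u)$ connected---and both invoke Lang--Steinberg once; yours applies it to a connected unipotent group and avoids any explicit discussion of component groups or $F$-conjugacy, which is arguably more elementary. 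Your weight-space derivation of the semidirect decomposition from the direct sum $\mathfrak{u}(\lambda,2)=\mathfrak{g}(\lambda,2)\oplus\mathfrak{u}(\lambda,3)$ and the $\bG$-equivariance of $\phi_{\mathrm{spr}}$ is a clean replacement for the citation to Premet, although you still invoke connectedness of $C_{\bU(\lambda)}(u_0)$ as a black box.

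One caveat worth noting: your claim that the decomposition ``transfers to general $u\in\mathcal{O}\cap\bU(\lambda,2)$ via $\bP(\lambda)$-conjugation'' is not literally correct, since conjugating by $p\in\bP(\lambda)$ carries the complement $C_{\bL(\lambda)}(u_0)$ to ${}^pC_{\bL(\lambda)}(u_0)$, which need not lie in $\bL(\lambda)$ (as $\bL(\lambda)$ is not normal in $\bP(\lambda)$); so the asserted identity $\pi(C_{\bP(\lambda)}(u))=C_{\bU(\lambda)}(u)$, which your coset computation uses, does not immediately transport. This does not affect the application (\cref{prop:galois-aut-GGGR}), where $u$ is chosen with $\phi_{\mathrm{spr}}(u)\in\mathfrak{g}(\lambda,2)_{\mathrm{reg}}$ so your direct argument applies to $u$ itself; and the paper's own appeal to Premet is subject to the same implicit normalisation of $u$. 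If you wish to cover the corollary in full literal generality, you should either normalise $u$ first or argue that the image $\pi(C_{\bP(\lambda)}(u))$ is still a connected closed subgroup of $\bU(\lambda)$ containing $C_{\bU(\lambda)}(u)$ without identifying it exactly.
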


\begin{proof}
Assume $v \in \mathcal{O}\cap U(\lambda,2)$, so by \cref{lem:P-conj} there exists an element $g \in \bP(\lambda)$ such that $v = {}^gu$. As $F(v) = v$ we must have $g^{-1}F(g) \in C_{\bP(\lambda)}(u)$. If we set $A_{\bP(\lambda)}(u) = C_{\bP(\lambda)}(u)/C_{\bP(\lambda)}^{\circ}(u)$ then the map ${}^gu \mapsto g^{-1}F(g)C_{\bP(\lambda)}^{\circ}(u)$ induces a bijection between the $P(\lambda)$-conjugacy classes contained in $\mathcal{O}\cap U(\lambda,2) = (\mathcal{O} \cap \bU(\lambda,2))^F$ and the $F$-conjugacy classes of $A_{\bP(\lambda)}(u)$, see \cite[4.3.5]{geck:2003:intro-to-algebraic-geometry}. If $A_{\bL(\lambda)}(u) = C_{\bL(\lambda)}(u)/C_{\bL(\lambda)}^{\circ}(u)$ then it's known that the embedding $C_{\bL(\lambda)}(u) \hookrightarrow C_{\bP(\lambda)}(u)$ induces an isomorphism $A_{\bL(\lambda)}(u) \to A_{\bP(\lambda)}(u)$. Indeed, arguing as in the proof of \cite[3.22]{taylor:2016:GGGRs-small-characteristics} we obtain from \cite[2.3]{premet:2003:nilpotent-orbits-in-good-characteristic} that $C_{\bP(\lambda)}(u) = C_{\bL(\lambda)}(u) \ltimes C_{\bU(\lambda)}(u)$ from which the statement follows immediately. Applying the Lang--Steinberg theorem to the connected group $\bL(\lambda)$ there exists an element $l_1 \in \bL(\lambda)$ such that $l_1^{-1}F(l_1)C_{\bP(\lambda)}^{\circ}(u) = g^{-1}F(g)C_{\bP(\lambda)}^{\circ}(u)$. We therefore have ${}^{l_1}u$ and $v$ are $P(\lambda)$ conjugate. As $P(\lambda) = U(\lambda) \rtimes L(\lambda)$ the statement follows.
\end{proof}

\begin{pa}\label{pa:GGGR-intro}
We are now ready to introduce GGGRs. For this we assume a chosen $\bG$-invariant trace form $\kappa(-,-) : \lie{g} \times \lie{g} \to \mathbb{K}$, which is not too degenerate in the sense of \cite[5.6]{taylor:2016:GGGRs-small-characteristics}, and an $\mathbb{F}_q$-opposition automorphism ${}^{\dag} : \lie{g} \to \lie{g}$, see \cite[5.1]{taylor:2016:GGGRs-small-characteristics} for the definition. Moreover, we assume $\chi_q : \mathbb{F}_q^+ \to \Ql^{\times}$ is a character of the finite field $\mathbb{F}_q$ viewed as an additive group. Let $u \in \mathcal{U}^F$ be a rational unipotent element and let $\lambda \in \widecheck{X}(\bG)^F$ be a cocharacter such that $e = \phi_{\mathrm{spr}}(u) \in \lie{g}(\lambda,2)_{\mathrm{reg}}$. Following \cite[5.10]{taylor:2016:GGGRs-small-characteristics} we define a linear character $\varphi_u : U(\lambda,2) \to \Ql$ by setting
\begin{equation*}
\varphi_u(x) = \chi_q(\kappa(e^{\dag},\phi_{\mathrm{spr}}(x))).
\end{equation*}
With this we have the following definition of the GGGR $\Gamma_u$.
\end{pa}

\begin{definition}
The index $[U(\lambda,1):U(\lambda,2)]$ is an even power of $q$ and the class function
\begin{equation*}
\Gamma_u = [U(\lambda,1):U(\lambda,2)]^{-1/2}\Ind_{U(\lambda,2)}^G(\varphi_u).
\end{equation*}
is a character of $G$ known as a generalised Gelfand--Graev representation (GGGR).
\end{definition}

\begin{prop}\label{prop:galois-aut-GGGR}
Let $\gamma \in \Gal(\mathbb{Q}_{|G|}/\mathbb{Q})$ be a Galois automorphism such that $\gamma(\zeta) = \zeta^n$ for all $p$-roots of unity, where $n \in \mathbb{Z}$ is an integer coprime to $p$. Then for any unipotent element $u \in \mathcal{U}^F$ we have $\Gamma_u^{\gamma} = \Gamma_{u^n}$.
\end{prop}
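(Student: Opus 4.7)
My plan is to unwind the definition of $\Gamma_u$, apply $\gamma$ term by term, and reassemble to recognise $\Gamma_{u^n}$. The normalisation $[U(\lambda,1):U(\lambda,2)]^{-1/2}$ is a power of $q$, hence rational and fixed by $\gamma$, and induction commutes with any Galois action, so
\[
\Gamma_u^\gamma = [U(\lambda,1):U(\lambda,2)]^{-1/2}\Ind_{U(\lambda,2)}^G(\varphi_u^\gamma).
\]
Because $\chi_q$ takes values in $p$-th roots of unity, so does $\varphi_u$, giving $\varphi_u^\gamma(x) = \varphi_u(x)^n$. Using the $\mathbb{F}_q$-linearity of ${}^\dag$ and the bilinearity of $\kappa$,
\[
\varphi_u^n(x) = \chi_q\bigl(n\,\kappa(e^\dag,\phi_{\mathrm{spr}}(x))\bigr) = \chi_q\bigl(\kappa((ne)^\dag,\phi_{\mathrm{spr}}(x))\bigr).
\]
Since $n$ is invertible in $\mathbb{F}_q$ we have $ne \in \mathfrak{g}(\lambda,2)_{\mathrm{reg}}^F$, so $u' := \phi_{\mathrm{spr}}^{-1}(ne) \in U(\lambda,2)$ is a rational unipotent element satisfying $\varphi_{u'} = \varphi_u^n$. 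Hence $\Gamma_u^\gamma = \Gamma_{u'}$.

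To finish I would show $\Gamma_{u'} = \Gamma_{u^n}$ by proving that $u'$ and $u^n$ are $G$-conjugate. Iterating (K2) yields $\phi_{\mathrm{spr}}(u^n) = ne+\epsilon$ for some $\epsilon \in \mathfrak{u}(\lambda,3)^F$, so both $u'$ and $u^n$ lie in $\mathcal{O}\cap U(\lambda,2)$, where $\mathcal{O}$ is the $\bG$-orbit of $u$. Because $\lambda$ is an associated cocharacter for the regular element $e$ in good characteristic, the standard $\mathfrak{sl}_2$-theoretic properties imply that $\mathrm{ad}(e)\colon\mathfrak{g}(\lambda,1)\to\mathfrak{g}(\lambda,3)$, and hence $\mathrm{ad}(ne)$, is surjective. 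A dimension count then shows that $\bU(\lambda,1)$ acts transitively on the affine subspace $ne+\mathfrak{u}(\lambda,3)$ by the adjoint representation, producing $z \in \bU(\lambda,1)$ with $z u' z^{-1} = u^n$. The set of such $z$ is a non-empty $F$-stable coset in the connected unipotent group $\bU(\lambda,1)$, so Lang--Steinberg supplies an $F$-fixed representative $z \in U(\lambda,1) \subseteq G$, giving the required $G$-conjugacy.

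The principal obstacle is precisely this last identification: within the common $\bG$-orbit $\mathcal{O}$ the elements $u^n$ and $u'$ could a priori lie in different $G$-classes (parametrised by $F$-twisted classes in $A_{\bG}(u)$), producing distinct GGGRs. Its resolution combines the approximate additivity (K2) of the Kawanaka isomorphism, the $\mathfrak{sl}_2$-theoretic surjectivity of $\mathrm{ad}(e)$, and Lang--Steinberg to descend from $\bG$- to $G$-conjugacy; the preceding steps amount to routine unpacking of the GGGR definition together with the Galois-invariance of its rational normalisation factor.
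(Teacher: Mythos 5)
Your proposal is correct in substance and rests on the same two pillars as the paper's argument --- the computation $\varphi_u^{\gamma} = \chi_q(\kappa((ne)^{\dag},\phi_{\mathrm{spr}}(-)))$ and the congruence $\phi_{\mathrm{spr}}(u^n)\equiv ne \pmod{\mathfrak{u}(\lambda,3)}$ coming from (K2) --- but it resolves the key conjugacy question by a genuinely different route. The paper never introduces your auxiliary element $u'=\phi_{\mathrm{spr}}^{-1}(ne)$: it applies \cref{cor:rat-P-conj} to write $u^n={}^{hl}u$ with $h\in U(\lambda)$ rational and $l\in\bL(\lambda)$, uses (K2) together with McNinch's lemma (that $\Ad(h)$ acts as the identity modulo $\mathfrak{u}(\lambda,3)$) to force $(\Ad l)e=ne$ exactly, because $\bL(\lambda)$ preserves the weight space $\lie{g}(\lambda,2)$, and then checks directly that $\varphi_u^{\gamma}=\varphi_{{}^lu}$ while $\Gamma_{u^n}=\Gamma_{{}^lu}$. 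That version leans only on the density/dimension statement of \cref{lem:P-conj} and the isomorphism $A_{\bL(\lambda)}(u)\cong A_{\bP(\lambda)}(u)$, both already established earlier in the paper. Your route instead needs the transitivity of $\Ad\bU(\lambda,1)$ on $ne+\mathfrak{u}(\lambda,3)$, which is true but requires more than the surjectivity of $\mathrm{ad}(e)\colon\lie{g}(\lambda,1)\to\lie{g}(\lambda,3)$: one needs surjectivity $\lie{g}(\lambda,i)\to\lie{g}(\lambda,i+2)$ in every degree $i\geqslant 1$, the fact that $(\Ad z)e\equiv e\pmod{\mathfrak{u}(\lambda,3)}$ for all $z\in\bU(\lambda,1)$ so that the orbit actually sits inside this affine space, and the closedness of orbits of unipotent groups to upgrade ``open of the right dimension'' to ``all of $ne+\mathfrak{u}(\lambda,3)$''.

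Two further points need shoring up. Your Lang--Steinberg step is applied to a coset of $C_{\bU(\lambda,1)}(u')$, so you must know this centraliser is connected; this holds by the result of Premet invoked in the proof of \cref{cor:rat-P-conj}, but it is not automatic for closed subgroups of unipotent groups in characteristic $p$. And you should justify that $ne$ still lies in the dense orbit $\lie{g}(\lambda,2)_{\mathrm{reg}}$ (for instance $ne=(\Ad\lambda(t))e$ with $t^2=n$ in $\mathbb{K}$), which is what allows $\Gamma_{u'}$ to be formed with the same cocharacter $\lambda$. With these supplied your argument is complete; the paper's version is shorter mainly because the rationality bookkeeping was packaged in advance into \cref{cor:rat-P-conj}.
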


\begin{proof}
We assume $e$ and $\lambda$ are as in \cref{pa:GGGR-intro}. Let $\mathcal{O} \in \mathcal{U}/\bG$ be the class containing $u$. As $n$ is coprime to $p$ we have $u$ and $u^n$ generate the same cyclic subgroup of $\bG$ so $u^n \in \mathcal{O}$ by \cite[Corollary 3]{liebeck-seitz:2012:unipotent-nilpotent-classes}. Now clearly $u^n \in \mathcal{O} \cap U(\lambda,2)$ so by \cref{cor:rat-P-conj} there exist elements $h \in U(\lambda)$ and $l \in \bL(\lambda)$ such that $u^n = {}^{hl}u$. We thus have $\phi_{\mathrm{spr}}(u^n) = \phi_{\mathrm{spr}}({}^{hl}u) = (\Ad hl)e$.

By property (K2) above we have $\phi_{\mathrm{spr}}(u^n) \equiv ne \pmod{\mathfrak{u}(\lambda,3)}$. As $\phi_{\mathrm{spr}}(u^n) = (\Ad hl)e$ and $h \in U(\lambda)$ we conclude from \cite[Lemma 10]{mcninch:2004:nilpotent-orbits-good-characteristic} that
\begin{equation*}
(\Ad l)e \equiv ne \pmod{\mathfrak{u}(\lambda,3)}.
\end{equation*}
However, as $\bL(\lambda)$ preserves each weight space we have $(\Ad l)e \in \lie{g}(\lambda,2)$ so it must be that $(\Ad l)e = ne$. As mentioned in \cref{pa:GGGR-rat-conj} we have $\Gamma_{u^n} = \Gamma_{{}^{hl}u} = \Gamma_{{}^lu}$ so it is sufficient to show that $\Gamma_u^{\gamma} = \Gamma_{{}^lu}$. Clearly $\phi_{\mathrm{spr}}({}^lu) = (\Ad l)e \in \lie{g}(\lambda,2)_{\mathrm{reg}}$ so it is sufficient from the definition of the GGGR to show that $\varphi_u^{\gamma} = \varphi_{{}^lu}$.

As $\mathbb{F}_q^+$ is an abelian $p$-group and $\chi_q : \mathbb{F}_q^+ \to \Ql$ is a homomorphism it is clear that $\chi_q(a)^{\gamma} = \chi_q(na)$ for any $a \in \mathbb{F}_q^+$. Now, for any $x \in U(\lambda,2)$ we thus have
\begin{equation*}
\varphi_u^{\gamma}(x) = \chi_q(n\kappa(e^{\dag},\phi_{\mathrm{spr}}(x))) = \chi_q(\kappa((ne)^{\dag},\phi_{\mathrm{spr}}(x))) = \varphi_{{}^lu}(x)
\end{equation*}
as desired.
\end{proof}

\section{\cref{cond:conjIF} when $p=2$}\label{sec:p2IF}
\begin{assumption}
In this section and the following section we assume that $p=2$.
\end{assumption}

\begin{pa}
In \cite[\S4.1]{schaeffer-fry:2015:odd-degree-characters} the first author showed that $G$ satisfies \cref{cond:conjIF} in most cases where $G$ is a quasisimple group. The purpose of this section is to complete this work to show that all quasisimple groups of Lie type in characteristic 2 satisfy \cref{cond:conjIF}. We will do this using a general statement which describes precisely which odd degree characters of $G$ are fixed by $\sigma$. Note the techniques and ideas we use here are a synthesis of those already used in \cite{schaeffer-fry:2015:odd-degree-characters}. When $q > 2$ these characters are always semisimple and we may apply \cref{prop:semisimple-chars-sigma-fixed}, which generalises \cite[4.6]{schaeffer-fry:2015:odd-degree-characters}. When $q = 2$ not all odd degree characters are semisimple and we must provide some additional ad-hoc arguments to deal with these cases.
\end{pa}

\begin{lem}[Malle, {\cite[6.8]{malle:2007:height-0-characters}}]\label{lem:malle}
Assume either that $q>2$ or the Dynkin diagram of $\bG$ is simply laced then the only odd degree unipotent character is the trivial character.
\end{lem}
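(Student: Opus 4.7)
The proof strategy is to exploit Lusztig's explicit classification of the unipotent characters of $G=\bG^F$ together with their degree polynomials, and to check the odd-degree condition case by case over the simple types of $\bG$. For any unipotent character $\chi$, the degree $\chi(1)$ is the evaluation at $q$ of a rational function of the form $q^{a(\chi)} P_\chi(q)/n_\chi$, where $a(\chi)\geqslant 0$ is Lusztig's $a$-invariant, $P_\chi$ is a product of cyclotomic polynomials $\Phi_d$ with $d>1$ (so $P_\chi(q)$ is coprime to $q$ at any prime-power $q$), and $n_\chi$ is a positive integer depending only on the Lusztig family of $\chi$. In particular, the $2$-part of $\chi(1)$ equals $q^{a(\chi)}/(n_\chi)_2$, so $\chi(1)$ is odd precisely when $q^{a(\chi)}=(n_\chi)_2$.

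First I would reduce to $\bG$ simple, observing that unipotent characters of a direct product are outer tensor products of unipotent characters of the factors, and that they are insensitive to the isogeny type of $\bG$. For the simply laced types $A_n$, $D_n$, $E_6$, $E_7$, $E_8$, I would appeal to the tabulation of Lusztig families: the trivial character is the unique unipotent character with $a=0$, and in every other family one has $a(\chi)\geqslant 1$ while $(n_\chi)_2$ remains bounded by a small power of $2$ depending only on the type. Thus $q^{a(\chi)} \geqslant q \geqslant 2$ already outstrips $(n_\chi)_2$ in each non-trivial family regardless of $q$, forcing $\chi(1)$ to be even.

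The substantive case is the non-simply laced types $B_n$, $C_n$, $F_4$, $G_2$, together with the Suzuki and Ree families living in characteristic $2$. Here exceptional Lusztig families appear whose denominators $n_\chi$ are divisible by $2$ (by $4$ in $F_4$), and for such a family the equation $q^{a(\chi)}=(n_\chi)_2$ can genuinely be solved by a non-trivial $\chi$ when $q=2$. This is why the hypothesis $q>2$ is needed: non-trivial odd-degree unipotent characters do occur in small groups such as $\mathrm{Sp}_4(2)$. For $q>2$, however, $q^{a(\chi)}\geqslant q\geqslant 4$ as soon as $a(\chi)\geqslant 1$, and this already exceeds the maximum value of $(n_\chi)_2$ in every Lusztig family of every non-simply laced type. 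The principal obstacle is therefore the finite bookkeeping needed to verify these bounds on $(n_\chi)_2$, which is handled by consulting the explicit degree formulas compiled, for instance, in Carter's book, together with a direct inspection of the cuspidal families of $F_4$ where the largest even denominators arise.
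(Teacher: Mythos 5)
The paper does not give its own proof of this result --- it is quoted as Proposition 6.8 of Malle's paper on height-zero characters --- so there is no in-text argument to compare against; I'll assess your sketch on its own terms. It has two genuine gaps. The first is that you never invoke the standing hypothesis of this section of the paper, namely $p=2$, and without it neither the argument nor the statement survives. You pass from ``$P_\chi(q)$ coprime to $q$'' to ``the $2$-part of $\chi(1)$ equals $q^{a(\chi)}/(n_\chi)_2$'', but the link between these requires $q$ to be a power of $2$: for odd $q$ the factors $\Phi_1(q)=q-1$ and $\Phi_2(q)=q+1$ are even and contribute to the $2$-part, and indeed the Steinberg character, of degree $q^{|\Phi^+|}$, is a non-trivial odd-degree unipotent character whenever $q$ is odd. (A smaller slip: $\Phi_1$ does appear in generic degrees --- e.g.\ the unipotent character of $\mathrm{Sp}_4$ of degree $\tfrac12 q\Phi_1^2$ --- so the parenthetical ``$d>1$'' is wrong, though it does no harm once $q$ is a power of $2$.)

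The second gap is that the inequality $q^{a(\chi)}>(n_\chi)_2$, which carries all the weight, is asserted rather than verified, and the crude comparison with $q$ alone does not deliver it. For type $D_n$ the Lusztig family groups $\mathcal{G}_{\mathcal{F}}$ are elementary abelian $2$-groups whose order is unbounded as $n$ grows, so $(n_\chi)_2$ is \emph{not} bounded by a constant ``depending only on the type''; and for $F_4$ the exceptional family with $\mathcal{G}_{\mathcal{F}}\cong S_4$ produces $(n_\chi)_2=8>4$, so the claim that $q\geqslant 4$ already exceeds every $(n_\chi)_2$ in the non-simply-laced types is false. What one actually has to check, family by family, is that the $a$-invariant attached to a family is large enough that $q^{a(\chi)}$ beats $(n_\chi)_2$ under the stated hypotheses; the hypothesis ``$q>2$ or simply laced'' is precisely what excludes the borderline families with $a=1$ and $\mathcal{G}_{\mathcal{F}}\cong\mathbb{Z}/2$ that occur in $B_n$, $C_n$, $F_4$, $G_2$ at $q=2$ --- your $\mathrm{Sp}_4(2)$ example lives there --- while in the simply-laced types no family with nontrivial $\mathcal{G}_{\mathcal{F}}$ has such a small $a$-value. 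That bookkeeping is the substance of the proof, and it has to be read off from the tables of families and $a$-invariants rather than deduced from ``$q\geqslant 2$''.
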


\begin{prop}\label{prop:odd-degree-fixed}
An odd degree character $\chi \in \mathcal{E}(G,s)$ is $\sigma$-fixed if and only if $s$ is $G^{\star}$-conjugate to $s^2$.
\end{prop}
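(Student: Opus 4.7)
The forward implication is immediate. Applying \cref{lem:galoisLseries} with $\ell = 2$ and $b = b' = 1$, and using that any semisimple element of $G^\star$ has odd order in characteristic $2$ (so that $s_2 = 1$ and $s_{2'} = s$), one obtains $\mathcal{E}(G,s)^\sigma = \mathcal{E}(G,s^2)$. Hence a $\sigma$-fixed $\chi \in \mathcal{E}(G,s)$ also lies in $\mathcal{E}(G,s^2)$, and by disjointness of rational series these two series coincide, which is equivalent to $s$ being $G^\star$-conjugate to $s^2$.

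For the converse, fix $s$ with $s$ $G^{\star}$-conjugate to $s^2$, so that $\mathcal{E}(G,s)$ is $\sigma$-stable. The plan is to show that every odd-degree $\chi \in \mathcal{E}(G,s)$ is a semisimple character, and then invoke \cref{prop:semisimple-chars-sigma-fixed}(ii). The Gelfand--Graev hypothesis of that proposition is easily verified in our setting: a Gelfand--Graev character of $G$ is induced from a linear character of a Sylow $2$-subgroup of $G$ whose values are $p = 2$-power roots of unity, and these are fixed by $\sigma$, so the induction is $\sigma$-fixed as well. To establish the semisimplicity claim, I would use Jordan decomposition of characters, passing through the regular embedding $\iota$ to reduce to $\widetilde{G}$, where centralisers of semisimple elements in the dual are connected. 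Each $\chi$ then corresponds, up to a Clifford-theoretic passage back from $\widetilde{G}$ to $G$, to a unipotent character $\rho$ of a reductive subquotient of $C_{\widetilde{\bG}^\star}(\tilde{s})^{F^\star}$, with $\chi(1)$ odd forcing $\rho(1)$ odd. When $q > 2$, \cref{lem:malle} applied to this centraliser forces $\rho$ to be the trivial unipotent character, which is precisely the condition for $\chi$ to be semisimple.

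The main obstacle is the case $q = 2$, foreshadowed in the paragraph before the statement. Here \cref{lem:malle} only applies when the relevant Dynkin diagram is simply laced, and for non-simply-laced groups over $\mathbb{F}_2$ there can be genuine odd-degree non-semisimple characters in $\mathcal{E}(G,s)$ to which the semisimple-character machinery does not apply directly. These would have to be addressed by ad hoc arguments covering the (finitely many) relevant quasisimple groups, for instance by expressing each such character as a specific combination of Deligne--Lusztig characters whose $\sigma$-action can be read off from \cref{lem:galoisLseries} together with the hypothesis $s \sim s^2$.
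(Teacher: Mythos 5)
Your argument for the forward implication and for the converse when $q>2$ matches the paper's proof in essence: pass to a cover $\widetilde{\chi}\in\Irr(\widetilde{G}|\chi)$, observe $\widetilde{\chi}(1)$ is odd, apply \cref{lem:malle} to the centraliser $C_{\widetilde{G}^\star}(\widetilde{s})$ to force the Jordan correspondent to be the trivial unipotent character, conclude $\widetilde{\chi}$ and hence $\chi$ are semisimple, and then invoke \cref{prop:semisimple-chars-sigma-fixed}(ii) after noting Gelfand--Graev characters are $\sigma$-fixed when $p=2$. That part is fine.

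The gap is in the $q=2$ case, where you have the right instinct that \cref{lem:malle} fails for non-simply-laced types, but the rest is both factually off and not worked out. First, the set of troublesome cases is not ``finitely many relevant quasisimple groups'': after reducing to the non-simply-laced simple factor, one must handle all of $\B_n(2)$, $\C_n(2)$, $\F_4(2)$, and $\G_2(2)$, so two infinite families remain. Second, the suggestion to ``express each such character as a specific combination of Deligne--Lusztig characters'' cannot work directly, since odd-degree non-semisimple characters are generally not $\mathbb{Z}$-linear combinations of Deligne--Lusztig characters, and so their $\sigma$-action cannot be read off that way. What the paper does instead is pair $\chi$ with the almost characters $R_f^G(s)$ (for rational-valued extensions $f\in\Irr(\widetilde{W}^\circ(s))$), which \emph{are} $\sigma$-fixed under the hypothesis $s\sim s^2$; this gives $\langle\chi, R_f^G(s)\rangle_G = \langle\chi^\sigma, R_f^G(s)\rangle_G$ for all $f$. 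The nontrivial ingredient is then that in types $\B_n$ and $\C_n$ these multiplicities uniquely determine $\chi$ within its family (Digne--Michel), while for $\F_4$ and $\G_2$ the uniqueness can fail in general but holds for characters of odd degree, which is checked against the tables of unipotent characters. Without this uniqueness statement your argument does not close, so the $q=2$ case needs a genuinely different mechanism than the one you sketch.
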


\begin{proof}
Let $\chi \in \mathcal{E}(G,s)$ be an irreducible character of $G$ of odd degree and choose an irreducible character $\widetilde{\chi} \in \Irr(\widetilde{G} |\chi)$ covering $\chi$. By \cite[Proposition 10]{lusztig:1988:reductive-groups-with-a-disconnected-centre} the restriction $\Res_G^{\widetilde{G}}(\widetilde{\chi})$ is multiplicity free so $\widetilde{\chi}(1) = [\widetilde{G}:I_{\widetilde{G}}(\chi)]\chi(1)$, where $G \leqslant I_{\widetilde{G}}(\chi)$ is the inertia group of $\chi$. The order of the quotient $\widetilde{G}/G$ is coprime to $p$, hence so is $[\widetilde{G}:I_{\widetilde{G}}(\chi)]$. This implies $\widetilde{\chi}(1)$ is odd.

Now, assume $\widetilde{\chi}$ is contained in the Lusztig series $\mathcal{E}(\widetilde{G},\widetilde{s})$ then by \cite[4.23]{lusztig:1984:characters-of-reductive-groups} there exists a bijection $\Psi_{\widetilde{s}} : \mathcal{E}(\widetilde{G},\widetilde{s}) \to \mathcal{E}(C_{\widetilde{G}^{\star}}(\widetilde{s}),1)$ such that $\widetilde{\chi}(1) = [\widetilde{G} : C_{\widetilde{G}^{\star}}(\widetilde{s})]_{p'}\Psi_{\widetilde{s}}(\widetilde{\chi})(1)$, see also \cite[13.23, 13.24]{digne-michel:1991:representations-of-finite-groups-of-lie-type}. As $\widetilde{\chi}(1)$ is odd we must therefore have that $\Psi_s(\widetilde{\chi})(1)$ is also odd.

Let us assume, for the moment, that $q > 2$. Then according to \cref{lem:malle}, there is only one unipotent character of $C_{\widetilde{G}^{\star}}(\widetilde{s})$ of odd degree, namely the trivial character. Consequently, this implies that $\mathcal{E}(\widetilde{G},\widetilde{s})$ contains a unique character of odd degree and so $\widetilde{\chi}$ must be the unique semisimple character contained in this series, see \cite[8.4.8]{carter:1993:finite-groups-of-lie-type}. The character $\chi$ must therefore also be semisimple. Now any Gelfand--Graev character of $G$ is obtained by inducing a linear character from a Sylow $p$-subgroup of $G$. As $p=2$ this implies all Gelfand--Graev characters are $\sigma$-fixed so $\chi^{\sigma} = \chi$ by \cref{prop:semisimple-chars-sigma-fixed}.

We now assume that $q = 2$. If the Dynkin diagram of $C_{\widetilde{\bG}^{\star}}(\widetilde{s})$ is simply laced then we may apply the previous argument; so assume this is not the case. The Dynkin diagram of $\bG$ must then also have a component which is not simply laced. This corresponds to a semisimple subgroup of $\bG$ which has a trivial centre so splits off as a direct factor. With this it is clear that we need only consider the case where $\bG$ is simple of type $\B_n$, $\C_n$, $\F_4$, or $\G_2$.

Let $\mathcal{F} \subseteq \Irr(W^{\circ}(s))$ be an $F^{\star}$-stable family of characters of the Weyl group of $C_{\bG^{\star}}(s) = C_{\bG^{\star}}^{\circ}(s)$. For each $F^{\star}$-fixed character in $\mathcal{F}$ we choose one of its extensions to $\widetilde{W}^{\circ}(s)$ which is defined over $\mathbb{Q}$, c.f., \cite[3.2]{lusztig:1984:characters-of-reductive-groups}, and denote by $\widetilde{\mathcal{F}} \subseteq \Irr(\widetilde{W}^{\circ}(s))$ the resulting set of extensions. According to \cite[4.23]{lusztig:1984:characters-of-reductive-groups} there is a unique family such that $\langle \chi, R_f^G(s)\rangle_G \neq 0$ for some $f \in \widetilde{\mathcal{F}}$, c.f., \cref{pa:almost-chars}. Now as each $f \in \widetilde{\mathcal{F}}$ is rational valued we see that
\begin{equation*}
\langle \chi, R_f^G(s) \rangle_G = \langle \chi^{\sigma}, R_f^G(s)^{\sigma} \rangle_G = \langle \chi^{\sigma}, R_f^G(s) \rangle_G
\end{equation*}
If $\bG$ is of type $\B_n$ or $\C_n$ then these multiplicities uniquely determine the character $\chi$ so we must have $\chi = \chi^{\sigma}$ in these cases, see \cite[6.3]{digne-michel:1990:lusztigs-parametrization}. This statement is not true in general when $\bG$ is of type $\G_2$ or $\F_4$. However, comparing the tables of unipotent characters in \cite[\S13.9]{carter:1993:finite-groups-of-lie-type} with \cite[6.3]{digne-michel:1990:lusztigs-parametrization} we see the statement still holds for those of odd degree.
\end{proof}

\begin{assumption}
From now until the end of this article we assume that $\bG$ is simple and simply connected.
\end{assumption}

\begin{pa}\label{pa:assump-sylow}
If $G$ is perfect then the quotient $S = G/Z$ is a simple group of Lie type defined in characteristic $2$. We now wish to show that $G$ satisfies \cref{cond:conjIF}. With regards to this let $Q \leqslant \Aut(G)$ be a $2$-group which stabilises a Sylow $P \in \syl_2(G)$. The normaliser $B_0 = N_G(P)$ is a Borel subgroup of $G$, c.f., \cite[2.29(i)]{cabanes-enguehard:2004:representation-theory-of-finite-reductive-groups}, because $p=2$. We may clearly replace $P$ and $Q$ by any $G$-conjugate so we may assume that $B_0$ contains our fixed maximal torus $T_0$, c.f., \cref{pa:basic-setup}. In particular, we have $B_0 = P \rtimes T_0$ so $N_G(P)/P \cong T_0$. Note that as $Q$ stabilises $P$ it also stabilises $B_0$ and hence also $T_0$. We will denote by $\bB_0 \leqslant \bG$ an $F$-stable Borel subgroup such that $B_0 = \bB_0^F$.
\end{pa}

\begin{pa}
As we are working in characteristic $2$, we have to be careful when dealing with small fields. Namely we have to be mindful of degenerate tori, in the sense of \cite[3.6.1]{carter:1993:finite-groups-of-lie-type}. For instance, it can happen when $q=2$ that the torus $T_0$ is the trivial subgroup, c.f., \cite[3.6.7]{carter:1993:finite-groups-of-lie-type}. The following shows that $\bT_0$ is degenerate only when $q = 2$.
\end{pa}

\begin{lem}\label{lem:non-degen-tori}
The maximal torus $\bT_0$ is non-degenerate if and only if $q > 2$ or $G$ is ${}^2\A_n(2)$ with $n \geqslant 2$.
\end{lem}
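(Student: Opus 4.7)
The plan is a case-by-case analysis organised by the Dynkin type of $\bG$ and its Steinberg endomorphism $F$. The central tool is the formula $|T_0|=|\det(q\tau-1\mid X^{*}(\bT_0))|$, where $F$ acts on the character lattice as $q\tau$ for a finite-order lattice automorphism $\tau$ determined by the twist of $\bG$; when the full group structure of $T_0=\bT_0^{F}$ itself is needed, it can be recovered from the Smith normal form of $q\tau-1$.

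For the ``if'' direction I would first dispose of $q>2$ uniformly: on every $\tau$-eigenspace of $X^{*}(\bT_0)\otimes\mathbb{Q}$ the operator $q\tau-1$ has eigenvalues of absolute value at least $q-1>1$ (and at worst $q+1$ on a $-1$-eigenspace), so every elementary divisor of $q\tau-1$ is nontrivial and Carter's non-degeneracy condition from \cite[3.6.1]{carter:1993:finite-groups-of-lie-type} is immediate. For ${}^2\A_n(2)$ with $n\geqslant 2$, i.e.\ $\bG=\SL_{n+1}$ and $F$ the unitary Frobenius at $q=2$, I would write down the explicit diagonal form of $T_0$ inside $\mathrm{SU}_{n+1}(2)$; a surviving factor of $q+1=3$ or $q^2-1=3$ (depending on the parity of $n+1$) produces a nontrivial element not killed by the opposition involution $\tau$, and verifying Carter's condition for this element closes the case.

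For the ``only if'' direction I must exhibit degeneracy at $q=2$ in every remaining situation. Split types are handled uniformly, since $\tau=1$ gives $|T_0|=(q-1)^{r}=1$ so that $T_0$ is trivial; the case $\mathrm{SU}_2(2)\cong\SL_2(2)$ is absorbed here via the exceptional isomorphism. The Suzuki ${}^2\B_2(q^2)$ and Ree ${}^2\F_4(q^2)$ families exist only for $q^2\geqslant 8$, so automatically fall into the $q>2$ case already treated. What remains is to verify Carter's definition for the finitely many non-$\A$ twisted types ${}^2\D_n(2)$, ${}^3\D_4(2)$, and ${}^2\E_6(2)$, where $\tau$ has order $2$ or $3$ and acts nontrivially on the character lattice.

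The main obstacle I anticipate is precisely this last step. For those remaining twisted types $T_0$ is typically not trivial---e.g.\ for ${}^2\D_n(2)$ a surviving $(q+1)$-factor on the anisotropic part forces $|T_0|=3$---so degeneracy cannot be read off from the order of $T_0$ alone. Instead one must unpack Carter's definition directly, using the explicit description of $T_0$ and its image in $\bG^F$, and dispatch each of these small cases by hand; no new idea beyond careful bookkeeping should be required.
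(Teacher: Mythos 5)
The decisive step in your ``if'' direction does not hold up. Non-degeneracy in the sense of \cite[3.6.1]{carter:1993:finite-groups-of-lie-type} --- and as the paper's proof makes explicit --- means that every root $\alpha \in \Phi$ is nontrivial on the finite group $T_0$, equivalently $\alpha \notin (F-1)X(\bT_0)$. This is a statement about where the roots sit relative to the sublattice $(q\tau-1)X(\bT_0)$, not about the abstract isomorphism type of the quotient $X(\bT_0)/(q\tau-1)X(\bT_0)$. Knowing that every elementary divisor of $q\tau-1$ is nontrivial therefore does not yield non-degeneracy. For split $\SL_2(q)$ the positive root is $\alpha = 2\varpi$ with $\varpi$ the fundamental weight, so $\alpha$ restricted to $T_0$ is trivial whenever $(q-1) \mid 2$: thus $\SL_2(3)$ has a degenerate maximal torus of order $2$ even though its single elementary divisor $q-1=2$ is nontrivial. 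The lemma is only true because $p=2$, and your argument for $q>2$ never invokes this. The paper's proof does: for each root it constructs an explicit element $t_{\alpha}(\zeta) \in T_0$ (a norm-type product over the $\tau$-orbit of $\widecheck{\alpha}$) satisfying $\alpha(t_{\alpha}(\zeta)) = \zeta^2$, and then uses that squaring is injective on the odd-order group $\mathbb{F}_{q^k}^{\times}$, which contains a nontrivial element precisely when $q>2$. Some such root-by-root construction is unavoidable; the eigenvalue bound and the Smith normal form cannot replace it.

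A smaller but related problem is your treatment of ${}^2\A_n(2)$: exhibiting one nontrivial element of $T_0$ not killed by $\tau$ and ``verifying Carter's condition for this element'' is not sufficient, since the condition must be checked for every root and different roots can vanish on different elements; $\varepsilon_i - \varepsilon_j$ may well be trivial on your chosen element for most pairs $i,j$. The paper handles this by producing, for each root $\varepsilon_i-\varepsilon_j$, a tailored element of $T_0$ on which it takes the value $\zeta^2$ or $\zeta^{q-1}$ with $\zeta \notin \mathbb{F}_q^{\times}$. By contrast, your deferral of the degeneracy verification for ${}^2\D_n(2)$, ${}^3\D_4(2)$, and ${}^2\E_6(2)$ is unobjectionable, as the paper leaves the same computation to the reader.
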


\begin{proof}
To show that $\bT_0$ is non-degenerate we must show that for any root $\alpha \in \Phi \subseteq X(\bT_0)$ there exists an element $t \in T_0$ such that $\alpha(t) \neq 1$.

We start by treating the case where $G$ is of type ${}^2\A_n(q)$ with $n \geqslant 2$. We may assume that $\bG = \SL_{n+1}(\mathbb{K})$ and $\bT_0 \leqslant \bB_0$ are the subgroups of diagonal matrices and upper triangular matrices respectively. Moreover, we assume that $F = F_q \circ\phi = \phi \circ F_q$ where $F_q : \bG \to \bG$ is the Frobenius endomorphism raising each matrix entry to the power $q$ and $\phi : \bG \to \bG$ is the automorphism defined by $\phi(x) = (x^{-T})^{n_0}$, where $n_0 \in N_{\bG}(\bT_0)$ is the permutation matrix representing the longest element in the symmetric group. For any $1 \leqslant i \leqslant n$ we consider the usual homomorphisms $\varepsilon_i : \bT_0 \to \mathbb{K}^{\times}$ and $\widecheck{\varepsilon}_i : \mathbb{K}^{\times} \to \bT_0$ such that $\{\pm\varepsilon_i \mp \varepsilon_j \mid 1 \leqslant i < j \leqslant n+1\}$ is the set of roots and $\{\pm\widecheck{\varepsilon}_i \mp \widecheck{\varepsilon}_j \mid 1 \leqslant i < j \leqslant n+1\}$ is the set of coroots.

Given an element $\zeta \in \mathbb{F}_{q^2}^{\times} \leqslant \mathbb{K}^{\times}$ and an integer $1 \leqslant i \leqslant n+1$ we define a corresponding element
\begin{equation*}
t_i(\zeta) = (\widecheck{\varepsilon}_i - q\widecheck{\varepsilon}_{n+2-i})(\zeta) \in T_0.
\end{equation*}
Now assume $\alpha = \varepsilon_i - \varepsilon_j$ with $1 \leqslant i < j \leqslant n+1$. If $j = n+2-i$ then we have $\alpha(t_i(\zeta)) = \zeta^2$ and if $j \neq n+2-i$ then we have $\alpha(t_i(\zeta^{-1})t_{n+2-j}(\zeta)) = \zeta^{q-1}$. Thus, as we can clearly choose $\zeta \not\in \mathbb{F}_q^{\times}$ we see that $\bT_0$ is always non-degenerate. With this case dealt with we may assume that $G$ is not of type ${}^2\A_n(q)$ with $n \geqslant 2$.

Now let us denote by $\langle-,-\rangle : X(\bT_0) \times \widecheck{X}(\bT_0)$ the usual perfect pairing between the character and cocharacter groups of $\bT_0$. Let $\tau : \Phi \to \Phi$ and $\widecheck{\tau} : \widecheck{\Phi} \to \widecheck{\Phi}$ be the permutation of the roots and coroots induced by $F$. Given $\alpha \in \Phi$ we denote by $k\geqslant 1$ the smallest integer such that $\widecheck{\tau}^k(\widecheck{\alpha}) = \widecheck{\alpha}$. Given an element $\zeta \in \mathbb{F}_{q^k}^{\times} \leqslant \mathbb{K}^{\times}$ we define a corresponding element $t_{\alpha}(\zeta) \in T$ by setting
\begin{equation*}
t_{\alpha}(\zeta) = \widecheck{\alpha}(\zeta)\cdot\widecheck{\tau}(\widecheck{\alpha})(\zeta^q) \cdots \widecheck{\tau}^{k-1}(\widecheck{\alpha})(\zeta^{q^{k-1}})
\end{equation*}
As we assume that $G$ is not of type ${}^2\A_n(q)$ we have by \cite[10.3.2(iii)]{springer:2009:linear-algebraic-groups} that $\langle\alpha,\widecheck{\tau}^i(\widecheck{\alpha})\rangle = 0$ for any $1 \leqslant i \leqslant k-1$ and so
\begin{equation*}
\alpha(t_{\alpha}(\zeta)) = \zeta^{\langle \alpha,\widecheck{\alpha}\rangle}\zeta^{q\langle \alpha,\widecheck{\tau}(\widecheck{\alpha})\rangle}\cdots \zeta^{q^{k-1}\langle \alpha,\widecheck{\tau}^{k-1}(\widecheck{\alpha})\rangle} = \zeta^2.
\end{equation*}
Hence, if $\mathbb{F}_{q^k}^{\times}$ contains a non-trivial element then we have the torus is non-degenerate. This is the case if $q>2$.

Now assume that $q=2$. If $F$ is split then we have $T_0 = \{1\}$ by \cite[3.6.7]{carter:1993:finite-groups-of-lie-type}, so certainly the torus is degenerate in this case. Finally, it is an easy exercise with root systems to show that $\bT_0$ is degenerate when $G$ is ${}^2\D_n(2)$ ($n \geqslant 4$), ${}^3\D_4(2)$, or ${}^2\E_6(2)$. We leave the details to the reader.
\end{proof}

\begin{pa}
As $\bG$ is simply connected, any automorphism of $G$ can be obtained by restricting a bijective morphism of $\bG$ which commutes with $F$. Now recall that, with respect to $\bT_0$ and $\bB_0$, we have the notions of a graph, field, and diagonal automorphism, see \cite[Theorem 30, pg.\ 158]{steinberg:1968:lectures-on-chevalley-groups}. In particular, the automorphism $x \mapsto x^2$ of $\mathbb{K}$ determines a bijective morphism of $\bG$ that generates the cyclic subgroup of all field automorphisms. We refer to this automorphism as a \emph{generating field automorphism}. Now any $\varphi \in \Aut(G)$ can be written as a product $\alpha\beta\gamma\delta$ where $\alpha$ is an inner automorphism, $\beta$ is a field automorphism, $\gamma$ is a graph automorphism, and $\delta$ is a diagonal automorphism. We note, however, that graph automorphisms are omitted when $F$ is twisted, see \cite[Theorem 36, pg.\ 195]{steinberg:1968:lectures-on-chevalley-groups}. With these notions in place we have the following relating to \cref{cond:conjIF}.
\end{pa}

\begin{lem}\label{lem:Q-centraliser}
Keep the notation and assumptions of \cref{pa:assump-sylow} and furthermore assume that $\bT_0$ is nondegenerate. Then we have $C_{N_G(P)/P}(Q) \cong C_{T_0}(Q) = \{1\}$ if and only if $Q$ contains a generating field automorphism.
\end{lem}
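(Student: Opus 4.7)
The plan is to reduce the claim to analyzing the $Q$-action on the torus $\bT_0$ and to find explicit non-trivial fixed points using the elements $t_\alpha(\zeta)$ constructed in the proof of \cref{lem:non-degen-tori}. For the identification, since $p=2$ the Sylow $P$ is the unipotent radical of $B_0 = P\rtimes T_0$ and $N_G(P)=B_0$; because $T_0$ is abelian, conjugation from $B_0$ is trivial on $B_0/P$, so the projection $B_0\twoheadrightarrow T_0$ is $Q$-equivariant and yields $C_{N_G(P)/P}(Q)\cong C_{T_0}(Q)$.

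Next I would describe how $Q$ acts on $T_0$. Any $\varphi\in Q$ lifts to a bijective morphism of $\bG$ commuting with $F$ and, by Steinberg's theorem, factors as $\alpha\beta^{i}\gamma\delta$ with $\alpha$ inner, $\beta$ the generating field automorphism, $\gamma$ graph, and $\delta$ diagonal. The inner part is by an element of $N_G(B_0)=B_0$ (since $Q$ stabilises $B_0$) and so acts trivially on $T_0$ by the argument above, while the diagonal part may be represented in $\widetilde\bT_0$, which centralises $T_0$. Hence the action of $Q$ on $T_0\subseteq\bT_0\cong(\mathbb{K}^\times)^r$ factors through its image $H$ in the abelian group $\langle F_2\rangle\times\Gamma$, where $F_2$ is coordinate-wise squaring and $\Gamma$ is the group of Dynkin diagram automorphisms acting by (signed) coordinate permutations.

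The ``if'' direction is then immediate: if $\beta\in Q$ then $F_2\in H$, and $\bT_0^{F_2}=(\mathbb{F}_2^\times)^r=\{1\}$ gives $C_{T_0}(Q)\leqslant \{1\}$. For the ``only if'' direction, I would argue contrapositively. Assume $\beta\notin Q$ and set $E=\langle F,H\rangle$, so that $C_{T_0}(Q)=\bT_0^E$. Write $\pi_1(E)=\langle F_2^{k_0}\rangle$ for the field-projection and $\Gamma_0=\pi_2(E)\leqslant\Gamma$. In the generic subcase $k_0\geqslant 2$, I would choose any non-trivial $\zeta\in\mathbb{F}_{2^{k_0}}^\times$ and form the $\Gamma_0$-symmetrisation $\prod_{\alpha'\in\mathcal{O}}t_{\alpha'}(\zeta)$ from \cref{lem:non-degen-tori}, where $\mathcal{O}$ is a $\Gamma_0$-orbit of roots; this element lies in $T_0\cap\bT_0^{\Gamma_0}$ and is non-trivial, using that $\bT_0^{\Gamma_0}$ has positive dimension (a routine case check on the Dynkin diagrams of $A_n$, $D_n$, $E_6$ and their twists) and that $|\mathbb{F}_{2^{k_0}}^\times|\geqslant 3$. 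In the residual subcase $k_0=1$ but $\beta\notin Q$, some $(F_2,\gamma_0)\in H$ has $\gamma_0\neq 1$ of $2$-power order $d$ (necessarily $d=2$ for the types at hand); iterating gives $(F_2^d,1)\in E$, and one checks that the $(F_2,\gamma_0)$-fixed subgroup of $\bT_0$ meets $T_0$ in non-trivial $(2^d-1)$-torsion, using $2^d-1\mid q-1$ since $a\geqslant 2$. The edge case $G={}^2\A_n(2)$ is automatic: here $q=2$ so there is no non-trivial field automorphism, whence vacuously $\beta\notin Q$, and $Q$ acts on $T_0$ only through inner and diagonal automorphisms (both trivial), giving $C_{T_0}(Q)=T_0\neq\{1\}$ by \cref{lem:non-degen-tori}.

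The main technical obstacle is the residual subcase above, where the combined $(F_2,\gamma_0)$-action on $\bT_0$ must be tracked on the explicit $t_\alpha(\zeta)$ to produce a point that is simultaneously $F$-rational and $Q$-fixed; this reduces to the divisibility $(2^d-1)\mid (q-1)$ together with the positive-dimensionality of $\bT_0^{\gamma_0}$, both of which follow from the root-system bookkeeping already present in \cref{lem:non-degen-tori}.
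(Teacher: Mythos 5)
Your proof follows essentially the same strategy as the paper: identify $C_{N_G(P)/P}(Q)\cong C_{T_0}(Q)$ via the $Q$-equivariant projection $B_0\twoheadrightarrow T_0$, reduce to the field and graph parts of each automorphism (discarding the inner and diagonal contributions, which act trivially on $T_0$), and then analyse fixed points on $T_0$. Your ``if'' direction is identical. For the ``only if'' direction you work globally with $E=\langle F,H\rangle$ and the elements $t_\alpha(\zeta)$ from the proof of \cref{lem:non-degen-tori}, whereas the paper simply observes per-element that (in the split case) any $\varphi=\beta\gamma$ with $\gamma\neq 1$ already has non-trivial fixed points, reducing to pure field automorphisms, and (in the twisted case) a pure field automorphism $t\mapsto t^{2^a}$ has non-trivial fixed points if and only if $a>1$. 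Your global packaging is arguably more careful about what happens when $Q$ has several generators, but the explicit symmetrisation of $t_\alpha(\zeta)$ adds bookkeeping that the paper avoids.

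There are, however, two genuine issues. First, the edge case $G={}^2\A_n(2)$ is not ``vacuous'': $\SU_{n+1}(2)$ does have a non-trivial field automorphism, namely the generating field automorphism induced by $x\mapsto x^2$, which has order $2$ on $G$ (since $\beta^2=F_4=F^2$ is trivial on $G=\bG^F$). If $Q$ contains this $\beta$ then $C_{T_0}(Q)=\{1\}$; your conclusion $C_{T_0}(Q)=T_0\neq\{1\}$ is therefore false for such $Q$. The paper treats ${}^2\A_n(2)$ uniformly through its twisted-case analysis. Second, in your residual subcase the assertion ``$2^d-1\mid q-1$ since $a\geqslant 2$'' is incorrect as stated: for $d=2$ one has $3\mid 2^a-1$ only when $a$ is even, not merely $a\geqslant 2$ (take $a=3$). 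The conclusion is in fact salvaged because the presence of $(F_2,\gamma_0)$ inside a $2$-group $Q$ forces $\beta$ to have $2$-power order on $G$, hence $a$ to be a power of $2$ $\geqslant 2$, which gives $d\mid a$; but this is the argument you need to make, and ``$a\geqslant 2$'' alone does not suffice.
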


\begin{proof}
Rephrasing, we have $C_{T_0}(Q) = \{t \in T_0 \mid \varphi(t) = t$ for all $\varphi \in Q\}$. Now assume $\varphi \in Q$.  Then, as above, we write $\varphi$ as a product $\alpha\beta\gamma\delta$. Firstly, by definition, we have $\delta$ acts trivially on $T_0$ so we may assume $\varphi = \alpha\beta\gamma$. By assumption $\varphi$ stabilises $B_0$ and $T_0$, c.f., \cref{pa:assump-sylow}, which implies that $\alpha$ stabilises $B_0$ and $T_0$ because $\beta$ and $\gamma$ do by definition. This implies that $\alpha$ is affected by an element of $B_0$ because $N_G(B_0) = B_0$. As $\bT_0$ is non-degenerate we have by \cite[3.6.7]{carter:1993:finite-groups-of-lie-type} that $N_G(T_0) = N_{\bG}(\bT_0)^F$ so $N_{B_0}(T_0) = T_0$, hence $\alpha$ acts trivially on $T_0$. We may thus assume that $\varphi = \beta\gamma$.

Assume $F$ is twisted, so that $\varphi = \beta$ and for any $t \in T_0$ we have $\varphi(t) = t^{2^a}$ for some $a \in \mathbb{N}$. Identifying $T_0$ with a direct product $\mathbb{F}_{q^{m_1}}^{\times}\times\cdots\times \mathbb{F}_{q^{m_k}}^{\times}$ we see that $\varphi$ has a non-trivial fixed point if and only if $a > 1$. Now assume $F$ is split.  Then we may identify $T_0$ with a direct product $\mathbb{F}_q^{\times} \times \cdots \times \mathbb{F}_q^{\times}$ such that $\gamma$ permutes factors and $\beta$ acts as a $2$-power map. If $\gamma$ is non-trivial then $\varphi$ will have a non-trivial fixed point, so we are reduced to the previous case.
\end{proof}

\begin{prop}\label{prop:conjIF-p=2}
Assume $G$ is perfect, so the quotient $G/Z$ is simple.  Then any quasisimple group whose simple quotient is isomorphic to $G/Z$ satisfies \cref{cond:conjIF}.
\end{prop}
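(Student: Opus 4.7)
The strategy is to combine \cref{lem:Q-centraliser} with \cref{prop:odd-degree-fixed}: the former extracts from the hypothesis on $Q$ the presence of a generating field automorphism, and the latter translates the resulting invariance into the condition $s \sim s^2$ needed for $\sigma$-invariance.

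By the remarks in \cref{pa:conditions-remarks} it suffices to prove \cref{cond:conjIF} for a Schur cover of $G/Z$, which in the generic situation we may take to be $G = \bG^F$. Let $Q$ and $P$ be as in \cref{pa:assump-sylow}, consider a $Q$-invariant and $\sigma$-fixed $\lambda \in \Irr(Z)$, and let $\chi \in \Irr_{2'}(G|\lambda)$ be $Q$-invariant. Suppose first that $\bT_0$ is non-degenerate, in the sense of \cref{lem:non-degen-tori}. Then \cref{lem:Q-centraliser} forces $Q$ to contain a generating field automorphism $F_0$. Writing $\chi \in \mathcal{E}(G,s)$ for some semisimple $s \in G^{\star}$ (automatically a $2'$-element since $p=2$), the $F_0$-invariance of $\chi$ shows that $\mathcal{E}(G,s)$ is $F_0$-stable. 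Under the identifications of \cref{pa:basic-setup} and \cref{pa:dl-bijection}, $F_0$ is dual to a generating Frobenius $F_0^{\star}$ on $\bG^{\star}$, and a standard Deligne--Lusztig calculation shows that the action of $F_0$ on $\Irr(G)$ sends $\mathcal{E}(G,s)$ to $\mathcal{E}(G,s^{2^{-1}})$, where $2^{-1}$ denotes the multiplicative inverse of $2$ modulo the order of $s$. Hence $s$ and $s^{2^{-1}}$ are $G^{\star}$-conjugate, from which it follows after conjugating the relation by the same element that $s^2$ and $s$ are $G^{\star}$-conjugate, and \cref{prop:odd-degree-fixed} gives $\chi^{\sigma} = \chi$ as required.

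It remains to handle the degenerate cases from \cref{lem:non-degen-tori}: the split groups over $\mathbb{F}_2$ and the twisted groups ${}^2\D_n(2)$, ${}^3\D_4(2)$, and ${}^2\E_6(2)$. Over $\mathbb{F}_2$ the group of field automorphisms is trivial, so \cref{lem:Q-centraliser} no longer produces a field automorphism inside $Q$. Most of these groups are already shown to satisfy \cref{cond:conjIF} in \cite[\S4.1]{schaeffer-fry:2015:odd-degree-characters}; the remaining configurations are highly constrained by the hypothesis $C_{N_G(P)/P}(Q) = 1$ together with the absence of nontrivial field automorphisms, and each can be treated either by direct inspection of the known odd-degree characters or by a variant of the Lusztig-series argument above with a generating graph automorphism taking the role of $F_0$.

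The principal technical obstacle will be verifying that, under the choices of $\imath$ and $\jmath$ in \cref{pa:dl-bijection}, the action of a generating field automorphism $F_0$ on the rational Lusztig series really is given by $[s] \mapsto [s^{2^{-1}}]$; this is folklore but requires some care to make fully precise. The secondary difficulty is a clean case-by-case completion of the degenerate groups not covered by \cite{schaeffer-fry:2015:odd-degree-characters}.
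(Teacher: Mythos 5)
Your non-degenerate argument follows the same skeleton as the paper's (extract a generating field automorphism from $Q$ via \cref{lem:Q-centraliser}, translate into $s \sim s^2$, invoke \cref{prop:odd-degree-fixed}), but you work directly with $G$ and its dual $\bG^{\star}$ instead of passing through the regular embedding. The paper deliberately lifts to $\widetilde{G}$: it uses Glauberman's lemma to produce a $\widetilde{\varphi}$-fixed $\widetilde{\chi} \in \Irr(\widetilde{G}|\chi)$, constructs $\widetilde{\varphi}^{\star}$ explicitly with $\widetilde{\varphi}^{\star}(t) = t^2$ on $\widetilde{\bT}_0^{\star}$, and then descends to $\bT_0^{\star}$. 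That detour is precisely how the paper makes rigorous the step you flag as ``folklore but requires some care'': working in the connected-centre group $\widetilde{\bG}^{\star} = \GL_n$ makes $\widetilde{\bG}^{\star}$- and $\widetilde{G}^{\star}$-conjugacy coincide and lets one cite \cite[2.4]{navarro-tiep-turull:2008:brauer-characters-with-cyclotomic} cleanly. Your more direct route is defensible, but the paper's is tighter.

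The degenerate case is where your proposal has a genuine gap. First, a factual correction: the reference \cite{schaeffer-fry:2015:odd-degree-characters} is invoked by the paper only for groups with \emph{nontrivial} Schur multiplier; the degenerate groups with trivial Schur multiplier (split groups over $\mathbb{F}_2$ and ${}^2\D_n(2)$, ${}^3\D_4(2)$) are not covered there, contrary to what you say. Second, your fallback idea of ``a variant of the Lusztig-series argument with a generating graph automorphism taking the role of $F_0$'' does not get off the ground: \cref{lem:Q-centraliser} explicitly requires $\bT_0$ to be non-degenerate, so in the degenerate case the hypothesis $C_{N_G(P)/P}(Q) = 1$ yields no information about $Q$ at all --- indeed for split $q=2$ one has $T_0 = \{1\}$ so the hypothesis is vacuous and you must show \emph{every} odd-degree character is $\sigma$-fixed, not just those stable under some conveniently-placed automorphism. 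What actually closes this case in the paper is the observation that, after discarding ${}^2\A_n(2)$ and ${}^2\E_6(2)$, the remaining degenerate groups have trivial centre, hence $G \cong G^{\star}$ is simple, and then \cite[Lemma 2.4]{guralnick-malle-navarro:2004:self-normalising-sylows} gives that \emph{every} semisimple element $s \in G^{\star}$ is $G^{\star}$-conjugate to $s^2$. Combined with \cref{lem:galoisLseries} and \cref{prop:odd-degree-fixed}, this shows every odd-degree character is $\sigma$-fixed, with no case-by-case inspection required. You should identify a mechanism of this kind rather than defer to ``direct inspection''.
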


\begin{proof}
We will assume that $G$ has a trivial Schur multiplier because the remaining cases were dealt with in \cite[\S4]{schaeffer-fry:2015:odd-degree-characters} using explicit computations with {\sf GAP}. This means $G$ is a Schur cover and it suffices to show that \cref{cond:conjIF} holds for $G$, c.f., \cref{pa:conditions-remarks}.

We start with the assumption that the maximal torus $\bT_0$ is non-degenerate. Let $Q$ and $P \in \syl_2(G)$ be as in \cref{pa:assump-sylow} such that $C_{N_G(P)/P}(Q)=\{1\}$.  Then $Q$ contains a generating field automorphism $\varphi$ by \cref{lem:Q-centraliser}. We will denote by $\chi \in \mathcal{E}(G,s)$ a $Q$-invariant character of odd degree.

The bijective morphism $\varphi$ may be extended to a bijective morphism $\widetilde{\varphi} : \widetilde{\bG} \to \widetilde{\bG}$ by setting $\widetilde{\varphi}(z) = z^2$ for any $z \in Z(\widetilde{\bG})$. There then exists a dual bijective morphism $\varphi^{\star} : \widetilde{\bG}^{\star} \to \widetilde{\bG}^{\star}$ such that $F^{\star}\circ \widetilde{\varphi}^{\star} = \widetilde{\varphi}^{\star}\circ F^{\star}$ and $\widetilde{\varphi}^{\star}(t) = t^2$ for all $t \in \widetilde{\bT}_0^{\star}$. This map also descends to a homomorphism $\varphi^{\star} : \bT_0^{\star} \to \bT_0^{\star}$ defined by $\varphi^{\star}(t) = \iota^{\star}(\widetilde{\varphi}^{\star}(\widetilde{t}))$ where $\widetilde{t} \in \widetilde{\bT}_0^{\star}$ satisfies $\iota^{\star}(\widetilde{t}) = t$. This map is well defined because $\Ker(\iota^{\star}) = Z(\widetilde{\bG}^{\star})$, which is preserved by $\widetilde{\varphi}^{\star}$.

As the quotient $\widetilde{G}/G$ is an abelian, hence solvable, $2'$-group and $\langle \widetilde{\varphi}\rangle \leqslant \Aut(\widetilde{G})$ is a $2$-group we have by Glauberman's Lemma \cite[13.28]{isaacs:2006:character-theory-of-finite-groups} that there exists a character $\widetilde{\chi} \in \Irr(\widetilde{G}|\chi)$ covering $\chi$ which is fixed by $\widetilde{\varphi}$. Now, if $\widetilde{\chi}$ is contained in the Lusztig series $\mathcal{E}(\widetilde{G},\widetilde{s})$ then it is also contained in the Lusztig series $\mathcal{E}(\widetilde{G},\widetilde{\varphi}^{\star}(\widetilde{s}))$ by \cite[2.4]{navarro-tiep-turull:2008:brauer-characters-with-cyclotomic}. This implies $\widetilde{s}$ and $\varphi^{\star}(\widetilde{s})$ are $\widetilde{G}^{\star}$-conjugate.

There exists an element $g \in \widetilde{\bG}^{\star}$ such that ${}^g\widetilde{s} \in \widetilde{\bT}_0^{\star}$ so ${}^{g^{-1}\widetilde{\varphi}^{\star}(g)}\widetilde{\varphi}^{\star}(\widetilde{s}) = \widetilde{s}^2$, which means that $\widetilde{\varphi}^{\star}(\widetilde{s})$ is $\widetilde{\bG}^{\star}$-conjugate to $\widetilde{s}^2$. However, $\widetilde{\bG}^{\star}$-conjugacy is equivalent to $\widetilde{G}^{\star}$-conjugacy so this implies that $\widetilde{s}$ is $\widetilde{G}^{\star}$-conjugate to $\widetilde{s}^2$. By \cite[11.7]{bonnafe:2006:sln} we have $\chi \in \mathcal{E}(G,s)$ where $s = \iota^{\star}(\widetilde{s})$. Clearly we have $s$ is $G^{\star}$-conjugate to $s^2$ so every odd degree character in $\mathcal{E}(G,s)$ is $\sigma$-fixed by \cref{prop:odd-degree-fixed}. This shows that \cref{cond:conjIF} holds in this case.

Now consider the case where the maximal torus $\bT_0$ is degenerate. By \cref{lem:non-degen-tori} we have $q = 2$ but $G$ is not ${}^2\A_n(2)$ with $n \geqslant 2$. As we assumed that $G$ has a trivial Schur multiplier we have $G$ is not ${}^2\E_6(2)$ and so $G$ has a trivial centre. This implies $G \cong G^{\star}$ is a finite simple group so the argument in \cite[Lemma 2.4]{guralnick-malle-navarro:2004:self-normalising-sylows} shows that every semisimple element $s \in G^{\star}$ is $G^{\star}$-conjugate to $s^2$. By \cref{lem:galoisLseries,prop:odd-degree-fixed} we thus have every odd degree character of $G$ is $\sigma$-fixed so \cref{cond:conjIF} holds in this case.
\end{proof}

\section{\cref{cond:conjFI} when $p=2$}\label{sec:condition-conjFI}
\begin{pa}
Assume $G$ is perfect with centre $Z$, so the quotient $S = G/Z$ is simple. We now wish to outline a strategy for showing that $S$ satisfies \cref{cond:conjFI}. Firstly, we note that the homomorphism $\Aut(G) \to \Aut(S)$ induced by the natural surjection $G \to S$ is an isomorphism, see \cite[Theorem 2.5.14(d)]{gorenstein-lyons-solomon:1998:classification-3}. Now assume $A = SQ = GQ/Z$ for some $2$-group $Q \leqslant \Aut(S) \cong \Aut(G)$. We wish to show that if $A$ does not have a self-normalising Sylow $2$-subgroup, then there exists a character $\chi\in\irr_{2'}(S)$ which is $A$-invariant but is not fixed by $\sigma$. We will construct such a character by finding a character $\widetilde{\chi} \in \Irr_{2'}(\widetilde{G})$ such that $\widetilde{\chi}^\sigma \neq \widetilde{\chi}$ and the restriction $\chi=\Res_G^{\widetilde{G}}(\widetilde{\chi}) \in \Irr(G)$ is irreducible, $Q$-invariant, and has $Z$ in its kernel. We're then done by viewing $\chi$ as a character of $S$.
\end{pa}

\begin{pa}
Let $s$ be a semisimple element of $\widetilde{G}^\star$ then there exists a unique semisimple character $\widetilde{\chi}_s \in \mathcal{E}(\widetilde{G}, s)$ of $\widetilde{G}$, which has degree $\widetilde{\chi}_s(1) = [\widetilde{G}^{\star}:C_{\widetilde{G}^{\star}}(s)]_{p'}$. Recall that the number of irreducible constituents of $\chi := \Res_G^{\widetilde{G}}(\widetilde{\chi}_s)$ is exactly the number of irreducible characters $\theta \in \Irr(\widetilde{G}/G)$ satisfying $\widetilde{\chi}_s\theta = \widetilde{\chi}_s$. Furthermore, we have $\irr(\widetilde{G}/G) = \{\widetilde{\chi}_t \mid t \in Z(\widetilde{G}^{\star})\}$ and $\mathcal{E}(\widetilde{G}, s)\widetilde{\chi}_t = \mathcal{E}(\widetilde{G}, st)$ for such $t \in Z(\widetilde{G}^{\star})$, see \cite[13.30]{digne-michel:1991:representations-of-finite-groups-of-lie-type}. Hence we see that $\chi$ is irreducible if and only if $s$ is not $\widetilde{G}^\star$-conjugate to $st$ for any nontrivial $t\in Z(\widetilde{G}^\star)$. Moreover, if $s \in [\widetilde{G}^\star, \widetilde{G}^\star]$ then $\widetilde{\chi}_s$ is trivial on $Z(\widetilde{G})$ so $\Res^{\widetilde{G}}_G(\widetilde{\chi}_s)$ is trivial on $Z(G)$, see \cite[Lemma 4.4(ii)]{navarro-tiep:2013:characters-of-relative-pprime-degree}. Note that, by construction, the character $\chi$ is fixed by all inner and diagonal automorphisms of $G$.
\end{pa}

\begin{pa}\label{pa:s/s-inv}
Assume now that $s$ has odd order, so by \cref{lem:galoisLseries} and \cite[Corollary 2.4]{navarro-tiep-turull:2008:brauer-characters-with-cyclotomic}, we have $\chi_s^\sigma=\chi_{s^2}$ and $\chi_s^\psi=\chi_{\psi^{\star}(s)}$ for any $\psi \in \Aut(G)$, where $\psi^{\star} : \widetilde{G}^{\star} \to \widetilde{G}^{\star}$ is an automorphism dual to $\psi$. Hence to prove that \cref{cond:conjFI} holds it suffices to find an element $s \in [\widetilde{G}^\star, \widetilde{G}^\star]$ such that the following hold:
\begin{enumerate}[label=(S\arabic*)]
	\item $s$ has odd order and $[\widetilde{G}:C_{\widetilde{G}}(s)]_{p'}$ is odd,\label{it:S1}
	\item $s$ is not $\widetilde{G}^{\star}$-conjugate to $s^2$,\label{it:S2}
	\item $s$ is not $\widetilde{G}^{\star}$-conjugate to $st$ for any $t \in Z(\widetilde{G}^{\star})$,\label{it:S3}
	\item $s$ is $G^{\star}$-conjugate to $\psi^{\star}(s)$ for any field or graph automorphism $\psi \in Q$.\label{it:S4}
\end{enumerate}
With this in place we may now complete the proof of \Cref{thm:SN2SgoodTypeA} when $p=2$. Indeed, we have already shown that \cref{cond:conjIF} holds in \cref{prop:conjIF-p=2} so it suffices to show that \cref{cond:conjFI} holds under this assumption.
\end{pa}

\begin{prop}\label{prop:p-2-conjFI}
Assume $p=2$. If $G$ is perfect, then the finite simple group $S = G/Z$ satisfies \cref{cond:conjFI}.
\end{prop}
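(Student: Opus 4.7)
The plan is to proceed contrapositively: assuming $C_{N_S(P)/P}(Q) \neq 1$, I will produce a $Q$-invariant odd-degree character of $S$ that is not fixed by $\sigma$. By the discussion in \cref{pa:s/s-inv}, this reduces to exhibiting a semisimple element $s \in [\widetilde{G}^\star,\widetilde{G}^\star]$ satisfying the four conditions (S1)--(S4); the desired character is then obtained by descending $\Res^{\widetilde{G}}_G(\widetilde{\chi}_s)$ to $S$.

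First I would dispose of the degenerate-torus cases. By \cref{lem:non-degen-tori}, $\bT_0$ is degenerate only when $q=2$. In the split case $T_0 = \{1\}$ by \cite[3.6.7]{carter:1993:finite-groups-of-lie-type}, which forces $N_G(P) = P$ and makes the hypothesis vacuous. The remaining groups ${}^2\D_n(2)$ with $n \geqslant 4$, ${}^3\D_4(2)$, and ${}^2\E_6(2)$ I would handle by direct computation in {\sf GAP}, mirroring the Schur-multiplier exceptions treated in \cref{prop:conjIF-p=2}.

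Assume henceforth that $\bT_0$ is non-degenerate. By \cref{lem:Q-centraliser} our hypothesis forces $Q$ to contain no generating field automorphism, so every field automorphism in $Q$ acts on $\widetilde{\bT}_0^\star$ through a Frobenius power $F_q^{2^a}$ with $a \geqslant 1$. This leaves enough freedom to build $s$: in each Dynkin type I would choose a $Q$-stable, $F^\star$-stable maximal torus $\bT^\star \leqslant \widetilde{\bG}^\star$ whose rational points $(\bT^\star)^{F^\star}$ have odd order, and then select $s \in (\bT^\star)^{F^\star}\cap [\widetilde{\bG}^\star,\widetilde{\bG}^\star]$ regular semisimple in $\widetilde{\bG}^\star$ of odd prime order $\ell \geqslant 5$ coprime to $|Z(\widetilde{G}^\star)|$, with eigenvalues contained in the subfield $\mathbb{F}_{q^{2^a}}$. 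Regularity forces $C_{\widetilde{G}^\star}(s) = (\bT^\star)^{F^\star}$ of odd order, giving (S1); coprimality of $\ell$ to $|Z(\widetilde{G}^\star)|$ gives (S3); the subfield and $Q$-stability choice gives (S4); and the condition $\ell \geqslant 5$, combined with a Weyl-group argument showing that $s$ and $s^2$ lie in distinct $N_{\widetilde{G}^\star}(\bT^\star)$-orbits, gives (S2).

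The hard part will be executing this construction uniformly across Dynkin types, particularly in small rank or small $q$ where the supply of regular semisimple classes of a suitable order is thin, and in type $\D_n$ where $|Z(\widetilde{G}^\star)|$ can itself interact badly with the choice of $\ell$. Such residual exceptional cases I would again verify by direct computation in {\sf GAP}.
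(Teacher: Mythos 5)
Your overall strategy---argue contrapositively and produce a semisimple $s \in [\widetilde{G}^\star,\widetilde{G}^\star]$ satisfying \crefrange{it:S1}{it:S4}---is exactly the paper's, and your observation that taking $s$ of prime order coprime to $|Z(\widetilde{G}^\star)|$ settles \cref{it:S3} for order reasons is a clean alternative to the paper's connectedness argument. But the construction you propose for $s$ has gaps at precisely the points where the work lies. First, \cref{it:S4} must hold for \emph{graph} automorphisms in $Q$ as well as field automorphisms; $Q$-stability of the torus $\bT^{\star}$ does not make the class of a regular element in it graph-stable, and you never address this. The paper's element $s_0=\widecheck{\alpha}_0(\zeta_0)$, with $\widecheck{\alpha}_0$ the sum of the simple coroots, is fixed by every graph automorphism by construction. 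Second, \cref{it:S2} and \cref{it:S4} pull against each other: you need $s$ \emph{not} conjugate to $s^2$ yet conjugate to $s^{2^i}$ for the smallest field automorphism $\varphi^i$ that could lie in $Q$ (and your description of these automorphisms, ``$F_q^{2^a}$ with eigenvalues in the subfield $\mathbb{F}_{q^{2^a}}$'', confuses extension fields with fixed subfields). Resolving this tension is the actual content of the proof; the paper does it by taking $\zeta_0$ of order $2^m-1$ (or of order $5$ when $m=1$, so that $s_0^4=s_0^{-1}$ is conjugate to $s_0$ by the reflection in $\widecheck{\alpha}_0$, while non-conjugacy of $s_0$ and $s_0^2$ follows from an inspection of coroot coefficients). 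Your ``Weyl-group argument showing that $s$ and $s^2$ lie in distinct orbits'' asserts the conclusion rather than proving it, and it is false for many regular classes of order $\ell\geqslant 5$; conversely, ruling out $s\sim s^2$ while keeping $s\sim s^{2^i}$ requires an explicit arithmetic choice that your sketch does not supply. Also note that \cref{it:S1} is automatic when $p=2$, so the regularity and odd-order-torus requirements buy you nothing there.

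The fallback to {\sf GAP} cannot close the argument either. The groups ${}^2\D_n(2)$, $n\geqslant 4$, form an infinite family, as do the ``small $q$'' cases ($q=4$, and $q=2$ for ${}^2\A_n$) across every Dynkin type, so ``residual exceptional cases \dots by direct computation'' leaves infinitely many groups unverified. The paper instead quotes \cite{schaeffer-fry:2015:odd-degree-characters} for all the $q=2$ twisted cases and handles every $q>2$ uniformly with the single explicit element $\widecheck{\alpha}_0(\zeta_0)$, conjugated into a suitable rational form when $q=4$. Until you exhibit, type by type, a class with the required behaviour under the $2$-power map and under graph automorphisms, and replace the {\sf GAP} fallback by an argument covering the infinite families, the proof is not complete.
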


\begin{proof}
Assume $q = 2$. If $F$ is split then $T_0 = \{1\}$ so $G \cong S$ and $N_S(P)/P \cong T_0 = \{1\}$, so certainly \cref{cond:conjFI} holds in this case. The cases ${}^2\A_n(2)$, ${}^2\D_n(2)$, ${}^3\D_4(2)$, and ${}^2\E_6(2)$ are dealt with in \cite{schaeffer-fry:2015:odd-degree-characters} so we may assume that $q > 2$. We will now prove the statement by finding a semisimple element $s \in [\widetilde{G}^{\star},\widetilde{G}^{\star}]$ satisfying the conditions outlined in \cref{pa:s/s-inv}. What follows is a synthesised version of the arguments in \cite{schaeffer-fry:2015:odd-degree-characters}.

We will denote by $\widecheck{\Phi}^{\star} \subseteq \widecheck{X}(\widetilde{\bT}_0^{\star})$ the coroots of $\widetilde{\bG}^{\star}$ with respect to $\widetilde{\bT}_0^{\star}$. Clearly for any $\widecheck{\alpha} \in \widecheck{\Phi}^{\star}$ and $\zeta \in \mathbb{K}^{\times}$ we have $\widecheck{\alpha}(\zeta) \in [\widetilde{\bG}^\star, \widetilde{\bG}^\star]$. We now choose a set of simple coroots $\widecheck{\Delta}^{\star} = \{\widecheck{\alpha}_1,\dots,\widecheck{\alpha}_n\} \subseteq \widecheck{\Phi}^{\star}$, which corresponds to choosing a Borel subgroup of $\widetilde{\bG}^{\star}$ containing $\widetilde{\bT}_0^{\star}$. With this in place we fix a coroot
\begin{equation*}
\widecheck{\alpha}_0 = \widecheck{\alpha}_1+\cdots+\widecheck{\alpha}_n \in \widecheck{\Phi}.
\end{equation*}
Note this is always a coroot for any indecomposable root system, as is easily checked.

As $p=2$ we have $\Ker(\widecheck{\alpha}) = \{1\}$ for any coroot $\widecheck{\alpha} \in \widecheck{\Phi}^{\star}$, c.f., the proof of \cite[7.3.5]{springer:2009:linear-algebraic-groups}. In particular, the map $\mathbb{K}^{\times} \times \cdots \times \mathbb{K}^{\times} \to \widetilde{\bT}_0^{\star}$ defined by
\begin{equation}\label{eq:torus-embed}
(\zeta_1,\dots,\zeta_n) \mapsto \widecheck{\alpha}_1(\zeta_1)\cdots \widecheck{\alpha}_n(\zeta_n)
\end{equation}
is an injective morphism of algebraic groups. The torus $\bT_0$ is non-degenerate because we assume $q > 2$, c.f., \cref{lem:non-degen-tori}, so $A$ has a self-normalising Sylow $2$-subgroup if and only if $Q$ contains a generating field automorphism which we denote by $\varphi$, c.f., \cref{lem:Q-centraliser}. Let us write $q = p^a$ for some integer $a \geqslant 1$. By assumption, $Q$ is a $2$-group so it may contain any field automorphism of the form $\varphi^i$ where $i>1$ is a divisor of $a$ such that $a/i$ is a $2$-power.

With this in mind let us write $a = 2^tm$ with $t \geqslant 0$ and $m \geqslant 1$ odd. We define an automorphism
\begin{equation*}
\psi = \begin{cases}
\varphi^m &\text{if }m > 1,\\
\varphi^2 &\text{if }m = 1.
\end{cases}
\end{equation*}
Note that $\psi$ generates the subgroup of all field automorphisms that may possibly be contained in $Q$. For the moment we will assume that $q > 4$. We now fix an element $\zeta_0 \in \mathbb{K}^{\times}$ with the following properties:
\begin{enumerate}[label=(\roman*)]
	\item if $m > 1$ then $\zeta_0^2 \neq \zeta_0^{-1}$ and $\zeta_0^{2^m-1} = 1$,
	\item if $m=1$ then $\zeta_0 \in \mathbb{K}^{\times}$ is an element of order $5$.
\end{enumerate}
Now consider the corresponding element $s_0 = \widecheck{\alpha}_0(\zeta_0) \in \widetilde{\bT}_0^{\star}$. One readily checks that if $q > 4$ then the element $s_0$ is $F^{\star}$-fixed. Now assume $q = 4$ and denote by $\dot{w} \in N_{\widetilde{\bG}^{\star}}(\widetilde{\bT}_0^{\star})$ an element representing the reflection of $\widecheck{\alpha}_0$. If $g \in \widetilde{\bG}^{\star}$ is an element such that $g^{-1}F^{\star}(g) = \dot{w}$ then clearly the conjugate $s = {}^gs_0$ is $F^{\star}$-fixed. Hence, in all cases we have defined a rational semisimple element $s \in \widetilde{T}_0^{\star}$ contained in the derived subgroup $[\widetilde{G}^{\star},\widetilde{G}^{\star}]$. We now show that the conditions \crefrange{it:S1}{it:S4} hold for $s$.

\cref{it:S1}. As $p=2$ this clearly holds for any semisimple element.

\cref{it:S2}. We claim that $s$ and $s^2$ are not $\widetilde{\bG}^{\star}$-conjugate, hence are not $\widetilde{G}^{\star}$-conjugate. If they were $\widetilde{\bG}^{\star}$-conjugate then $s_0$ would be $\widetilde{\bG}^{\star}$-conjugate to $s_0^2$ so by \cite[3.7.1]{carter:1993:finite-groups-of-lie-type} there would exist an element $\dot{w} \in N_{\widetilde{\bG}^{\star}}(\widetilde{\bT}_0^{\star})$, representing $w \in W_{\widetilde{\bG}^{\star}}(\widetilde{\bT}_0^{\star}) := N_{\widetilde{\bG}^{\star}}(\widetilde{\bT}_0^{\star})/\widetilde{\bT}_0^{\star}$, such that ${}^{\dot{w}}s_0 = s_0^2$. Assume $w(\widecheck{\alpha}_0) = a_1\widecheck{\alpha}_1+\cdots+a_n\widecheck{\alpha}_n$ with $a_i \in \mathbb{Z}$ then ${}^{\dot{w}}s_0 = \widecheck{\alpha}_1(\zeta_0^{a_1})\cdots \widecheck{\alpha}_n(\zeta_0^{a_n})$. Clearly $w(\widecheck{\alpha}_0) \in \widecheck{\Phi}^{\star}$ is a coroot. Inspecting the indecomposable root systems one easily observes that one of the following is true: $a_i = \pm 1$ or $(a_i,a_j) = (\pm 2, \pm 3)$ for some $1 \leqslant i,j \leqslant n$. In particular, the condition ${}^{\dot{w}}s_0 = s_0^2$ implies that either $\zeta_0^2 = \zeta_0^{\pm 1}$ or $\zeta_0^2 = \zeta_0^{\pm 2} = \zeta_0^{\pm 3}$. From the choice of our element $\zeta_0$ one easily confirms that this is impossible, so $s$ cannot be $\widetilde{G}^{\star}$-conjugate to $s^2$.

\cref{it:S3}. We need only show that $C_{\bG^{\star}}(s) = {}^gC_{\bG^{\star}}(s_0)$ is connected, see \cite[2.8(a)]{bonnafe:2005:quasi-isolated}. The argument used above shows that an element $\dot{w} \in N_{\bG^{\star}}(\bT_0^{\star})$, representing $w \in W_{\bG^{\star}}(\bT_0^{\star})$, satisfies ${}^{\dot{w}}s_0 = s_0$ if and only if $w(\widecheck{\alpha}_0) = \widecheck{\alpha}_0$. The centraliser of $\widecheck{\alpha}_0$ in $W_{\bG^{\star}}(\bT_0^{\star})$ is a parabolic subgroup, see \cite[A.29]{malle-testerman:2011:linear-algebraic-groups}, which implies that $C_{\bG^{\star}}(s_0)$ is connected by \cite[2.4]{digne-michel:1991:representations-of-finite-groups-of-lie-type}. We thus have $C_{\bG^{\star}}(s)$ is connected.

\cref{it:S4}. Assume $\gamma \in Q$ is a graph or field automorphism. As $C_{\bG^{\star}}(s)$ is connected we have $\gamma^{\star}(s)$ is $G^{\star}$-conjugate to $s$ if and only $\gamma^{\star}(s)$ is $\bG^{\star}$-conjugate to $s$. Moreover, it is clear that $\gamma^{\star}(s)$ is $\bG^{\star}$-conjugate to $s$ if and only $\gamma^{\star}(s_0)$ is $\bG^{\star}$-conjugate to $s_0$. Now certainly $s_0$ is fixed by all graph automorphisms. If $m > 1$ then we have $\psi^{\star}(s_0) = s_0^{2^m} = s_0$ and if $m = 1$ then we have $\psi^{\star}(s_0) = s_0^4 = s_0^{-1}$. However if $\dot{w} \in N_{\bG^{\star}}(\bT_0^{\star})$ represents the reflection of $\widecheck{\alpha}_0$ then ${}^{\dot{w}}s_0 = s_0^{-1}$ so we're done.
\end{proof}

\section{Sylow $2$-Subgroups of $\GL_n^{\varepsilon}(q)$}\label{sec:sylow}
\begin{assumption}
From this point forward we assume that $p$ is odd, $\bG = \SL_n(\mathbb{K})$, $\widetilde{\bG} = \GL_n(\mathbb{K})$, and $\iota$ is the natural inclusion map. Moreover, we assume that $\bG^{\star} = \PGL_n(\mathbb{K})$, $\widetilde{\bG}^{\star} = \GL_n(\mathbb{K})$, and $\iota^{\star}$ is the natural projection. The Frobenius endomorphism $F$ will be assumed to denote either the morphism $F_q$ or $F_q\phi = \phi F_q$, with the notation as in the proof of \cref{lem:non-degen-tori}. The reference tori $\bT_0$, $\bT_0^{\star}$, $\widetilde{\bT}_0$, $\widetilde{\bT}_0^{\star}$ will be taken to be the maximal tori of diagonal matrices.
\end{assumption}

\begin{pa}
Throughout we will adopt the following convention: The split group $\bG^{F_q}$, resp., twisted group $\bG^{F_q\phi}$, which we continue to refer to as $G$, will be denoted by $\SL_n^{+1}(q)$, resp., $\SL_n^{-1}(q)$. To unify this we let $\varepsilon$ denote $\pm1$ and simply write $\SL_n^{\varepsilon}(q)$ to denote the two rational forms of $\SL_n(\mathbb{K})$. We also write $\GL_n^{\varepsilon}(q)$ and $\PGL_n^{\varepsilon}(q)$ to have the corresponding meanings. Furthermore we define $\overline{q}$ to be $q$ if $\varepsilon = 1$ and $q^2$ if $\varepsilon = -1$. With this we have natural embeddings $\SL_n^{\varepsilon}(q) \leqslant \SL_n(\overline{q})$, $\GL_n^{\varepsilon}(q) \leqslant \GL_n(\overline{q})$, and $\PGL_n^{\varepsilon}(q) \leqslant \PGL_n(\overline{q})$.  Recall that in this setting, $\widetilde{G} = \widetilde{G}^\star = \GL_n^\varepsilon(q)$, $G^\star = \PGL^\varepsilon_n(q)$, and we write $Z$ for the centre $Z(G)$ of $\SL_n^\varepsilon(q)$.
\end{pa}

\begin{pa}
In this section we recall results of Carter--Fong on the Sylow $2$-subgroups of $\GL_n^\varepsilon(q)$. For this we introduce the following notation.  For $r \geqslant 0$ an integer, we denote by $S_r^{\varepsilon}(q)$ a Sylow $2$-subgroup of $\GL_{2^r}^{\varepsilon}(q)$. With this in place we have the following, see \cite[Theorem 1, Theorem 4]{carter-fong:1964:the-Sylow-2-subgroups}.
\end{pa}

\begin{thm}[Carter--Fong]\label{thm:carter-fong}
Let $n = 2^{r_1} + \cdots + 2^{r_t}$, with $0\leqslant r_1 < \cdots < r_t$,  be an integer written in its $2$-adic expansion. If $\widetilde{P} \leqslant \widetilde{G}$ is a Sylow $2$-subgroup of $\widetilde{G} = \GL_n^{\varepsilon}(q)$ then $\widetilde{P} \cong \prod_{i = 1}^t S_{r_i}^{\varepsilon}(q)$ and
\begin{equation}\label{CF2}
N_{\wt{G}}(\wt{P}) \cong \wt{P} \times C_{(q-\epsilon)_{2'}}\times\cdots\times C_{(q-\epsilon)_{2'}}
\end{equation}
with $t$ copies of the cyclic group $C_{(q-\epsilon)_{2'}}$.
\end{thm}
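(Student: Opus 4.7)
The plan is to first reduce the problem to the case where $n = 2^r$ is a power of two, and then to analyse the single-block normaliser $N_{\GL_{2^r}^\varepsilon(q)}(S_r^\varepsilon(q))$ by induction on $r$ using the natural wreath product structure on $S_r^\varepsilon(q)$.

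For the reduction step, I would consider the block-diagonal subgroup $L = \prod_{i=1}^t \GL_{2^{r_i}}^\varepsilon(q) \leqslant \widetilde{G}$ corresponding to a decomposition of the natural module into pieces of dimensions $2^{r_1},\dots,2^{r_t}$. A direct computation with the usual order formula for $\GL_n^\varepsilon(q)$, using Kummer's theorem on carries in binary addition (so that the $2$-part of $|\GL_n^\varepsilon(q)|$ matches that of $|L|$), shows that the index $[\widetilde{G}:L]$ is odd. Hence $L$ contains a Sylow $2$-subgroup of $\widetilde{G}$, namely $\widetilde{P} := \prod_i S_{r_i}^\varepsilon(q)$, which gives the first assertion.

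Next I would show that $N_{\widetilde{G}}(\widetilde{P}) \leqslant L$, so that the normaliser also decomposes as a direct product. The key point is that $\widetilde{P}$ canonically determines the underlying direct sum decomposition $\mathbb{F}_{\overline{q}}^n = V_1 \oplus \cdots \oplus V_t$ with $\dim V_i = 2^{r_i}$, for instance by recovering the $V_i$ as the nonzero indecomposable $\widetilde{P}$-summands; since the dimensions $2^{r_i}$ are mutually distinct, any normalising element must preserve each $V_i$ individually rather than merely permute them. This reduces the problem to computing $N_{\GL_{2^r}^\varepsilon(q)}(S_r^\varepsilon(q))$ for each $r \geqslant 0$ separately.

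For this single-block computation I would induct on $r$. The base case $r = 0$ is immediate since $\GL_1^\varepsilon(q) \cong C_{q-\varepsilon}$ is abelian and decomposes as $C_{(q-\varepsilon)_2} \times C_{(q-\varepsilon)_{2'}} = S_0^\varepsilon(q) \times C_{(q-\varepsilon)_{2'}}$. For the inductive step, one uses the wreath product embedding $\GL_{2^{r-1}}^\varepsilon(q) \wr C_2 \hookrightarrow \GL_{2^r}^\varepsilon(q)$ coming from a choice of invariant decomposition $V = V' \oplus V''$ with $\dim V' = \dim V'' = 2^{r-1}$ together with a swap; this places a copy of $S_{r-1}^\varepsilon(q) \wr C_2$ inside $\GL_{2^r}^\varepsilon(q)$, and an order count identifies it with $S_r^\varepsilon(q)$. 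A standard computation of normalisers in wreath products, combined with the inductive hypothesis applied to each factor, then produces the stated decomposition $S_r^\varepsilon(q) \times C_{(q-\varepsilon)_{2'}}$, where the cyclic $2'$-factor is the diagonally embedded odd part of the centre of one of the two wreath factors.

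The principal technical obstacle, as usual when treating $\GL_n^\varepsilon(q)$ uniformly in both signs, is the twisted case $\varepsilon = -1$: one must arrange the wreath embedding so that the swap element actually lies in $\GL_{2^r}^{-1}(q) = \GL_{2^r}(q^2)^{F_q\phi}$ and not merely in the ambient $\GL_{2^r}(q^2)$. This forces a careful choice of basis and an $F_q\phi$-equivariant decomposition $V = V'\oplus V''$, after which the normaliser calculation proceeds exactly as in the split case.
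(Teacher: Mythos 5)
The paper states this result purely as a citation to Carter and Fong \cite{carter-fong:1964:the-Sylow-2-subgroups} and gives no proof of its own, so your blind argument is a genuine re-proof. Your overall strategy --- reduce to blocks of $2$-power size via a Legendre/Kummer order computation, then analyse a single $\GL_{2^r}^\varepsilon(q)$ inductively by wreath products --- is indeed how Carter and Fong proceed, and your block-reduction step and the uniqueness argument using the distinctness of the block dimensions are sound. There is, however, a genuine gap at the base of your single-block induction, and it fails precisely when $q \not\equiv \varepsilon \pmod 4$. Concretely, take $\varepsilon = +1$ and $q \equiv 3 \pmod 4$. Your inductive step would produce $S_1^{\varepsilon}(q) \cong S_0^{\varepsilon}(q) \wr C_2 \cong C_{(q-1)_2} \wr C_2$, of order $2(q-1)_2^2 = 8$; but $|\GL_2(q)|_2 = (q-1)_2^2(q+1)_2 = 4(q+1)_2 \geqslant 16$, so your wreath product is a \emph{proper} $2$-subgroup and the order count you appeal to is simply false for $r = 1$. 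The actual Sylow $2$-subgroup of $\GL_2^\varepsilon(q)$ in this congruence class is obtained from the nonsplit maximal torus $C_{q^2-1}$ extended by the order-$2$ field/norm involution; it is a semidihedral group of order $2(q^2-1)_2$, not a wreath product of two smaller $2$-groups. The same failure occurs for $\varepsilon = -1$ with $q \equiv 1 \pmod 4$, where $|S_0^{-1}(q)\wr C_2| = 8$ while $|\GU_2(q)|_2 = 4(q-1)_2 \geqslant 16$, so the obstruction is governed by $q \bmod 4$ relative to $\varepsilon$, not by whether the group is twisted; your closing paragraph singles out the wrong difficulty.

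To repair this you must split into the two cases that Carter and Fong track throughout: when $q \equiv \varepsilon \pmod 4$ the wreath tower is anchored at $\GL_1^\varepsilon(q)$ as you describe, but when $q \not\equiv \varepsilon \pmod 4$ it must be anchored at $\GL_2^\varepsilon(q)$ with the semidihedral group from the nonsplit torus as base, the normaliser identity $N_{\GL_2^\varepsilon(q)}(S_1^\varepsilon(q)) = S_1^\varepsilon(q) \times C_{(q-\varepsilon)_{2'}}$ must be verified directly for that base, and the wreath-product induction then runs from $r = 2$ upward (the relation $|S_{r+1}^\varepsilon(q)| = 2|S_r^\varepsilon(q)|^2$ is valid for $r \geqslant 1$ in every case, so your order count is correct from that point on).
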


\begin{pa}
The group $N_{\wt{G}}(\wt{P})$ can be described more explicitly. Firstly, the Sylow $\widetilde{P}$ can be realised by embedding $\prod_{i = 1}^t S_{r_i}^{\varepsilon}(q) \leqslant \prod_{i = 1}^t \GL_{2^{r_i}}^{\varepsilon}(q)$ block-diagonally in a natural way. Now for each $1\leq j\leq t$ the corresponding factor $C_{(q-\epsilon)_{2'}}$ is embedded as the largest odd-order subgroup of the centre $Z(\GL^\varepsilon_{2^{r_j}}(q))$. In particular, writing $I_k$ for the identity of $\GL_k(q)$, elements of $N_{\wt{G}}(\wt{P})$ are of the form $xz$ where $x\in\wt{P}$ and
\begin{equation}\label{CF3}
z = \bigoplus_{i=1}^t \lambda_j I_{2^{r_j}} = \diag(\lambda_1 I_{2^{r_1}},\dots,\lambda_t I_{2^{r_t}})
\end{equation}
with $\lambda_j \in C_{(q-\epsilon)_{2'}}\leq \mathbb{F}_{\overline{q}}^\times$. In what follows, we will use the notation $z=\bigoplus_{j=1}^t z_j$ for this matrix with $z_j = \lambda_j I_{2^{r_j}}$ for each $1\leq j\leq t$. We close this section with a result which will be used as part of the proof of \cref{prop:extension-to-GLn}.
\end{pa}

\begin{lem}\label{cor:index-GL}
Let $m = 2^{r_1}+\cdots+2^{r_t} \in \mathbb{N}$, with $0 \leqslant r_1 < \cdots < r_t$, be an integer written in its $2$-adic expansion.  Then
\begin{equation*}
[\GL_{2m}^{\varepsilon}(q) : \GL_m^{\varepsilon}(q)^2]_2 = 2^t
\end{equation*}
where $n_2$ denotes the $2$-part of an integer $n \geqslant 1$.
\end{lem}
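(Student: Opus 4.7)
The plan is to apply \cref{thm:carter-fong} to express the $2$-parts of $|\GL_{2m}^\varepsilon(q)|$ and $|\GL_m^\varepsilon(q)|^2$ as products indexed by the binary digits of $m$, and then to verify that each corresponding ratio contributes a single factor of $2$.

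First I would observe that, since $0 \leqslant r_1 < \cdots < r_t$, we also have $0 < r_1+1 < \cdots < r_t+1$, so $2m = 2^{r_1+1}+\cdots+2^{r_t+1}$ is the $2$-adic expansion of $2m$. Applying \cref{thm:carter-fong} separately to $\GL_{2m}^\varepsilon(q)$ and $\GL_m^\varepsilon(q)$ gives
\begin{equation*}
|\GL_{2m}^\varepsilon(q)|_2 \;=\; \prod_{i=1}^t |S_{r_i+1}^\varepsilon(q)|, \qquad |\GL_m^\varepsilon(q)|_2 \;=\; \prod_{i=1}^t |S_{r_i}^\varepsilon(q)|,
\end{equation*}
hence
\begin{equation*}
[\GL_{2m}^\varepsilon(q):\GL_m^\varepsilon(q)^2]_2 \;=\; \prod_{i=1}^t \frac{|S_{r_i+1}^\varepsilon(q)|}{|S_{r_i}^\varepsilon(q)|^2}.
\end{equation*}

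It then suffices to verify that $|S_{r+1}^\varepsilon(q)| = 2\,|S_r^\varepsilon(q)|^2$ for each $r\geqslant 0$. Since $|S_r^\varepsilon(q)|$ equals the $2$-part of the explicit order $q^{2^r(2^r-1)/2}\prod_{j=1}^{2^r}(q^j-\varepsilon^j)$ and $q$ is odd, this is a $2$-adic valuation computation. Invoking the lifting-the-exponent formula, $v_2(q^j-\varepsilon^j) = v_2(q-\varepsilon)$ for odd $j$, and $v_2(q^j-\varepsilon^j) = v_2(q-\varepsilon)+v_2(q+\varepsilon)+v_2(j)-1$ for even $j$. The ranges $\{1,\ldots,2^r\}$ and $\{2^r+1,\ldots,2^{r+1}\}$ each contain equal numbers of odd and of even integers, so the constant $v_2(q\pm\varepsilon)$-contributions cancel between the two sides of the desired equality; what remains is the combinatorial identity
\begin{equation*}
\sum_{\substack{j \leqslant 2^{r+1}\\ j \text{ even}}} v_2(j) \;-\; 2\sum_{\substack{j\leqslant 2^r\\ j \text{ even}}} v_2(j) \;=\; v_2((2^r)!) - 2\,v_2((2^{r-1})!) \;=\; 1,
\end{equation*}
evaluated by Legendre's formula $v_2(n!) = n - s_2(n)$.

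The main obstacle is the bookkeeping in the last step: one must verify that the cancellation of constant contributions across the two ranges is exact, and handle the edge case $r=0$ in which $v_2((2^{r-1})!)$ must be interpreted correctly. A cleaner, structural route is to recognise that the Carter--Fong Sylow subgroup of $\GL_{2^{r+1}}^\varepsilon(q)$ is a wreath product $S_r^\varepsilon(q) \wr C_2$, realised via the block-diagonal embedding $\GL_{2^r}^\varepsilon(q)^2 \hookrightarrow \GL_{2^{r+1}}^\varepsilon(q)$ together with the involution swapping the two blocks; this structural identification immediately yields $|S_{r+1}^\varepsilon(q)| = 2\,|S_r^\varepsilon(q)|^2$, and hence the corollary.
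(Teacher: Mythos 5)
Your first two steps coincide with the paper's: rewrite $2m$ in its $2$-adic expansion, apply \cref{thm:carter-fong} to each order, and reduce the claim to the recursion $|S_{r+1}^\varepsilon(q)| = 2|S_r^\varepsilon(q)|^2$, which the paper then cites from \cite[Eq.~(4)]{carter-fong:1964:the-Sylow-2-subgroups}. Your attempted verification of this recursion has a genuine gap at $r=0$, and it is not the one you flag. The cancellation you invoke fails there: the range $\{1\}$ contains one odd and no even integer, while $\{2\}$ contains no odd and one even integer, so the constant $v_2(q\pm\varepsilon)$-contributions do not match up. Computing directly, $v_2(|S_1^\varepsilon(q)|) - 2\,v_2(|S_0^\varepsilon(q)|) = v_2(q^2-1) - v_2(q-\varepsilon) = v_2(q+\varepsilon)$, which equals $1$ only when $q\equiv\varepsilon\pmod 4$; concretely, $|\GL_2(3)|_2 = 16$ while $|\GL_1(3)|_2^2 = 4$, a ratio of $4$, not $2$. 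The residual combinatorial identity you highlight is in fact valid for every $r\geq 0$ --- the difficulty lies in the cancellation step you treat as unproblematic, not in the interpretation of $v_2((2^{r-1})!)$.

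The wreath-product alternative fails at exactly the same spot. For $q\not\equiv\varepsilon\pmod 4$, the Sylow $2$-subgroup of $\GL_2^\varepsilon(q)$ is semidihedral of order $4(q+\varepsilon)_2 > 8 = |S_0^\varepsilon(q)\wr C_2|$, so $S_1^\varepsilon(q)\not\cong S_0^\varepsilon(q)\wr C_2$; the recursion $S_{r+1}^\varepsilon(q)\cong S_r^\varepsilon(q)\wr C_2$ begins only at $r=1$ in that case. Both of your routes therefore implicitly require either $r_1\geq 1$ (i.e.\ $m$ even) or $q\equiv\varepsilon\pmod 4$. Since this hypothesis does not appear in the lemma's statement, the issue you have uncovered is real and should be flagged clearly rather than dismissed as a bookkeeping footnote; it also affects the applicability of the paper's citation of Carter--Fong's Eq.~(4) at $r=0$.
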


\begin{proof}
As $2m = 2^{r_1+1} + \cdots + 2^{r_t+1}$ is clearly the $2$-adic expansion of $2m$, we have by \cref{thm:carter-fong} that
\begin{equation*}
[\GL_{2m}^{\varepsilon}(q) : \GL_m^{\varepsilon}(q)^2]_2 = \prod_{i = 1}^t \left(|S_{r_i+1}^{\varepsilon}(q)|/|S_{r_i}^{\varepsilon}(q)|^2\right).
\end{equation*}
According to \cite[Eq.\ (4)]{carter-fong:1964:the-Sylow-2-subgroups} we have $|S_{r+1}^{\varepsilon}(q)| = 2|S_r^{\varepsilon}(q)|^2$ for any integer $r \geqslant 0$. From this the result follows immediately.
\end{proof}

\section{\cref{cond:conjFI} for Type $\A$}
\begin{pa}
Let $\wt{P}$ be a Sylow $2$-subgroup of $\wt{G}$, so that $P=\wt{P}\cap G$ is a Sylow $2$-subgroup of $G$ which is normal in $\wt{P}$.  Then \cite[Theorem 1]{kondratiev:2005:normalizers-of-sylow-2-subgroups} yields that
\begin{equation}\label{CF1}
N_{\wt{G}}(P) = \wt{P}C_{\wt{G}}(\wt{P}) = N_{\wt{G}}(\wt{P}),
\end{equation}
and hence we see that $N_G(P)=N_G(\wt{P})=N_{\wt{G}}(\wt{P})\cap G$. Now, if $n$ is not a power of $2$, write
\begin{equation}\label{2adic}
n=2^{r_1}+2^{r_2}+...+2^{r_t}
\end{equation}
with $t\geq 2$ and $r_1>r_2>...>r_t\geq 0$ for the $2$-adic expansion of $n$. We now wish to describe when the quotient $GQ/Z$, with $Q \leqslant \Aut(G)$ a $2$-group, has a self-normalising Sylow $2$-group; thus allowing us to show \cref{cond:conjFI} holds. The following gives a complete description of those subgroups $Q$ with this property.
\end{pa}

\begin{lemma}[see \cite{kondratiev:2005:normalizers-of-sylow-2-subgroups}]\label{lem:whenSN2Ssimple}
A simple group $\PSL_n^\epsilon(q)$ has a self-normalising Sylow $2$-subgroup if and only if one of the following holds:
\begin{itemize}
	\item[(i)] $n = 2^r$ for some $r\geqslant 2$,
	\item[(ii)] $ n \neq 2^r$ for any $r \geqslant 2$ and $(q-\epsilon)_{2'} = 1$,
	\item[(iii)] $n = 2^{r_1}+2^{r_2}$ for some $r_1 > r_2 \geqslant 0$ and $(q-\epsilon)_{2'} = (n,q-\epsilon)_{2'}$.
\end{itemize}
\end{lemma}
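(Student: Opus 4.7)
The plan is to convert the condition that $S = G/Z$ has a self-normalising Sylow $2$-subgroup into a single arithmetic equation in the $2$-adic expansion of $n$ and in $(q-\epsilon)_{2'}$, and then read off the three alternatives. First I would reduce to a computation inside $G$. Set $\overline{P} := PZ/Z$, which is a Sylow $2$-subgroup of $S$ because $P \cap Z$ is a Sylow $2$-subgroup of $Z$. The centrality of $Z$ gives $Z \leqslant N_G(P)$, and since $P$ is the unique Sylow $2$-subgroup of $PZ$ we have $N_G(PZ) = N_G(P)$. Hence $N_S(\overline{P}) = N_G(P)/Z$ and $[N_S(\overline{P}):\overline{P}] = [N_G(P):PZ]$. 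Combined with $[PZ:P] = |Z|/|Z\cap P| = (n,q-\epsilon)_{2'}$, the SN2S condition becomes the single equality
\begin{equation*}
[N_G(P):P] \;=\; (n,q-\epsilon)_{2'}.
\end{equation*}

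The heart of the proof is to compute $[N_G(P):P]$ using \eqref{CF1} and \Cref{thm:carter-fong}. By \eqref{CF1}, $N_G(P) = N_{\widetilde{G}}(\widetilde{P}) \cap G$, so $N_G(P)$ is the kernel of the determinant restricted to $N_{\widetilde{G}}(\widetilde{P})$. Writing a typical element as $xz$ with $x \in \widetilde{P}$ and $z = \bigoplus_{j=1}^{t} \lambda_j I_{2^{r_j}}$ as in \eqref{CF3}, one has
\begin{equation*}
\det(xz) \;=\; \det(x)\cdot\prod_{j=1}^{t} \lambda_j^{2^{r_j}}.
\end{equation*}
Since $\widetilde{G}/G \cong C_{q-\epsilon}$ via $\det$ and $[\widetilde{P}:P] = (q-\epsilon)_2$ (the image of a Sylow $2$ in the quotient $\widetilde{G}/G$ is the $2$-part), $\det(\widetilde{P})$ is the full $2$-part $C_{(q-\epsilon)_2}$ of $C_{q-\epsilon}$. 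Meanwhile $\lambda \mapsto \lambda^{2^{r_j}}$ is a bijection on the odd-order group $C_{(q-\epsilon)_{2'}}$, so already varying $\lambda_1$ alone surjects the product onto $C_{(q-\epsilon)_{2'}}$. Thus $\det(N_{\widetilde{G}}(\widetilde{P})) = C_{q-\epsilon}$, and combining with \eqref{CF2} gives
\begin{equation*}
[N_G(P):P] \;=\; \frac{[N_{\widetilde{G}}(\widetilde{P}):\widetilde{P}]\cdot[\widetilde{P}:P]}{q-\epsilon} \;=\; \frac{(q-\epsilon)_{2'}^{\,t}\cdot (q-\epsilon)_2}{q-\epsilon} \;=\; (q-\epsilon)_{2'}^{\,t-1}.
\end{equation*}

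Putting the two steps together, $S$ is SN2S if and only if $(q-\epsilon)_{2'}^{\,t-1} = (n,q-\epsilon)_{2'}$. Since the right-hand side divides $(q-\epsilon)_{2'}$, the equation forces $t-1 \leqslant 1$ or $(q-\epsilon)_{2'} = 1$. A case analysis completes the proof: $t=1$ (so $n$ is a power of $2$) gives the trivial identity $1=1$, which is (i); $t \geqslant 2$ with $(q-\epsilon)_{2'}=1$ also gives $1=1$, which is (ii); $t=2$ with $(q-\epsilon)_{2'}>1$ reduces to $(q-\epsilon)_{2'} = (n,q-\epsilon)_{2'}$, which is (iii); and if $t\geqslant 3$ with $(q-\epsilon)_{2'}>1$ then $(q-\epsilon)_{2'}^{\,t-1} \geqslant (q-\epsilon)_{2'}^{2}$ strictly exceeds $(n,q-\epsilon)_{2'}$, so the equation fails.

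The main obstacle I anticipate is the determinant surjectivity step: one must use the structure \eqref{CF2}--\eqref{CF3} explicitly, together with the bijectivity of the $2^{r_j}$-power map on the odd-order cyclic group $C_{(q-\epsilon)_{2'}}$, to conclude $\det(N_{\widetilde{G}}(\widetilde{P})) = C_{q-\epsilon}$. Everything else is routine index arithmetic granted \eqref{CF1} and \Cref{thm:carter-fong}.
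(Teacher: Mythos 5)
The paper does not actually prove this lemma --- it is quoted from Kondratiev --- so you are supplying an argument where the paper gives only a citation. Your strategy (reduce the SN2S condition to the single equation $[N_G(P):P]=(n,q-\epsilon)_{2'}$, compute the left-hand side from \eqref{CF1}--\eqref{CF3} via the determinant, and then do the case analysis in $t$) is the natural route, and the reduction step, the determinant-surjectivity argument, and the index arithmetic are all correct as far as they go: granted \eqref{CF1} and \cref{thm:carter-fong}, one does get $[N_G(P):P]=(q-\epsilon)_{2'}^{\,t-1}$.

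The genuine gap is at $n=2$, and it is not cosmetic. Your derived criterion declares every $n=2^r$, including $r=1$, to be SN2S, whereas clause (i) of the lemma deliberately requires $r\geqslant 2$; and indeed $\PSL_2(5)\cong\mathfrak{A}_5$ has Klein-four Sylow $2$-subgroups with normaliser $\mathfrak{A}_4$, so it is not SN2S even though $t=1$. The input that fails is \eqref{CF1}: for $\widetilde{G}=\GL_2(5)$ one has $N_{\widetilde{G}}(\widetilde{P})=\widetilde{P}$ of order $32$ (since $(q-1)_{2'}=1$), while $P=\widetilde{P}\cap\SL_2(5)\cong Q_8$ is normalised by $\langle\widetilde{P},\,N_{\SL_2(5)}(P)Z(\widetilde{G})\rangle$, a subgroup of order at least $96$; so $N_{\widetilde{G}}(P)\neq N_{\widetilde{G}}(\widetilde{P})$. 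Kondratiev's theorem, from which \eqref{CF1} is taken, carries an explicit list of exceptions that includes precisely the groups with dihedral or Klein-four Sylow $2$-subgroups, i.e.\ the $n=2$ family. Your proof therefore needs either a standing restriction to $n\geqslant 3$ together with a separate classical treatment of $\PSL_2(q)$ (where the answer is governed by $q\bmod 8$, not by $(q-\epsilon)_{2'}$ alone --- note that $\PSL_2(41)$ is SN2S although $(q\mp1)_{2'}\neq 1$ for both signs), or an explicit verification that \eqref{CF1} applies in the range you use it. I would add that the lemma as printed shares this blind spot: clause (ii) as written would also wrongly capture $\PSL_2(5)$, which is presumably why the intended scope is $n\geqslant 3$; but since your argument is meant to be a proof, you must make that restriction and the reason for it explicit rather than inherit it silently.
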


\begin{lemma}\label{lem:whenSN2S}
Write $\overline{q}=p^a$ and let $Q \leqslant \Aut(G)$ be a $2$-group. The quotient $GQ/Z$ has a self-normalising Sylow $2$-subgroup if and only if at least one of the following is satisfied:
\begin{itemize}
	\item[(1)] $G/Z$ has a self-normalising Sylow $2$-subgroup; 
	\item[(2)] $Q$ contains a graph automorphism in case $\epsilon = 1$ or an involutary field automorphism in case $\epsilon=-1$, either of which we may identify as the map $\phi$, up to inner and diagonal automorphisms;
	\item[(3)] $\epsilon=1$, $a$ is a $2$-power,  $(p-1)_{2'}=1$, and $Q$ contains a field automorphism of order $a$ (which we identify with $F_{ p}$, up to inner and diagonal automorphisms);
	\item[(4)] $\epsilon=1$, $a$ is a $2$-power, $p=3$, and $Q$ contains a field automorphism of order $a/2$ (which we identify with $F_3^2$, up to inner and diagonal automorphisms); or
	\item[(5)] $\epsilon=1$, $n=2^{r_1}+2^{r_2}$ for integers $r_1>r_2\geq 0$, $(p^m-1)_{2'}=\gcd(n,p^m-1)_{2'}$ for some $m$ dividing $a$, and $Q$ contains a field automorphism of order $a/m$ (which we identify with $F_{ p}^m$, up to inner and diagonal automorphisms).
\end{itemize}
\end{lemma}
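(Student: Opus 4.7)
The plan is to invoke the equivalence from \cref{pa:conditions-remarks}, namely that $GQ/Z$ has a self-normalising Sylow $2$-subgroup if and only if $C_{N_G(P)/P}(Q)=1$, and then to analyse this centraliser using the Carter--Fong description of \cref{thm:carter-fong}.

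First I will pin down $N_G(P)/P$. Setting $P_0\in\syl_2(G)$ and $P=P_0Z$, the equality $N_G(P_0)=N_{\widetilde{G}}(\widetilde{P})\cap G$ from \eqref{CF1}, together with the shape of scalar cosets in \eqref{CF3}, identifies $N_G(P_0)/P_0$ with the kernel of
\begin{equation*}
C_{(q-\varepsilon)_{2'}}^{\,t}\longrightarrow C_{(q-\varepsilon)_{2'}},\qquad (\lambda_1,\dots,\lambda_t)\longmapsto \prod_{j=1}^t\lambda_j^{2^{r_j}}.
\end{equation*}
Since each $2^{r_j}$ is a unit modulo the odd integer $(q-\varepsilon)_{2'}$, this kernel has order $(q-\varepsilon)_{2'}^{\,t-1}$, and the further quotient by the diagonally embedded image of $Z/Z_2$ (of order $(n,q-\varepsilon)_{2'}$) yields $N_G(P)/P$.

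Next I will work out how each kind of automorphism acts on this odd-order abelian group. Inner and diagonal automorphisms can be chosen to normalise $\widetilde{T}_0$ and act trivially on the cosets $z$ of \eqref{CF3}. The graph automorphism for $\varepsilon=1$ and the involutary field automorphism for $\varepsilon=-1$ both send $\lambda_jI_{2^{r_j}}$ to $\lambda_j^{-1}I_{2^{r_j}}$, hence act as inversion on $N_G(P)/P$ and so centralise only the identity. A general field automorphism $F_p^m$ preserves the block decomposition of \eqref{CF3} and acts componentwise as $\lambda_j\mapsto \lambda_j^{p^m}$, with componentwise fixed-point subgroup $C_g^{\,t}\leqslant C_{(q-\varepsilon)_{2'}}^{\,t}$, where $g=\gcd(p^m-1,(q-\varepsilon)_{2'})$.

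The rest is a case check, reading off when $C_{N_G(P)/P}(Q)=1$. Case (1) is immediate from \cref{lem:whenSN2Ssimple}, which covers exactly $N_G(P)/P=1$. Case (2) follows from the inversion argument on an odd-order group. In cases (3) and (4), one has $g=(p-1)_{2'}=1$ and $g\mid 8$ coprime to $(q-1)_{2'}$ respectively, so even the componentwise fixed space in $C_{(q-1)_{2'}}^{\,t}$ is already trivial. Case (5) is the delicate one: $t=2$ makes $N_G(P)/P$ cyclic of order $(q-\varepsilon)_{2'}/(n,q-\varepsilon)_{2'}$, and the arithmetic condition $(p^m-1)_{2'}=\gcd(n,p^m-1)_{2'}$ is tailored to force $(p^m-1)_{2'}$ to divide $(n,q-\varepsilon)_{2'}$, collapsing the $F_p^m$-fixed subgroup of the quotient. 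For the converse, I will use that a $2$-subgroup $Q$ can contain a field automorphism $F_p^m$ only when $a/m$ is a power of $2$; writing $a=2^sm_0$ with $m_0$ odd, the admissible values of $m$ are $m_0, 2m_0, 4m_0,\ldots$, and a short check of $g=\gcd(p^m-1,(q-\varepsilon)_{2'})$ against the image of $Z/Z_2$ produces an explicit nontrivial $Q$-fixed element whenever none of (1)--(5) holds. The hard part will be case (5), where the three quantities $(q-\varepsilon)_{2'}$, $(n,q-\varepsilon)_{2'}$ and $(p^m-1,(q-\varepsilon)_{2'})$ must be balanced delicately to confirm both that the listed arithmetic condition is sharp and that it genuinely exhausts all the ways a field automorphism contained in a $2$-group $Q$ can reduce the centraliser to the image of $Z/Z_2$.
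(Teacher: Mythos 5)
Your proposal takes essentially the same route as the paper: identify $N_G(P)/P$ explicitly via Carter--Fong, analyse the action of each type of outer automorphism, and case-check. The abstract reformulation of $N_G(P)/P$ as $\ker\bigl(C_{(q-\varepsilon)_{2'}}^{\,t}\to C_{(q-\varepsilon)_{2'}}\bigr)$ modulo the diagonal image of $Z_{2'}$ is correct and is a clean way to phrase what the paper does with elements $g=xz$. However, there are two genuine gaps.

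First, your inference in cases (3) and (4) from ``the componentwise fixed space in $C_{(q-1)_{2'}}^{\,t}$ is already trivial'' to triviality of $C_{N_G(P)/P}(\varphi)$ is logically unsound: for a $Q$-invariant subgroup $B\leqslant A$ one only has $C_A(Q)B/B\subseteq C_{A/B}(Q)$, with the inclusion possibly strict, so triviality of $C_A(Q)$ does not by itself kill $C_{A/B}(Q)$. What one actually needs is that $\lambda_j^{p^m-1}=\eta$ for a common $\eta\in Z_{2'}$ forces all $\lambda_j$ equal (hence $z$ central). This does hold because in cases (3), (4) the $(p^m-1)$th roots of unity of odd order in $\mathbb{F}_q^\times$ are trivial, but that step has to be said; it is precisely the content of the paper's step (IA).

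Second, and more seriously, case (5) is not proved but only promised, and the arithmetic is not as routine as the write-up suggests. With $N_G(P)/P$ cyclic of order $(q-\varepsilon)_{2'}/(n,q-\varepsilon)_{2'}$ and $F_p^m$ acting, triviality of the fixed subgroup amounts to $\gcd\bigl((p^m-1)(n,q-1)_{2'},\,(q-1)_{2'}\bigr)=(n,q-1)_{2'}$. The hypothesis of (5) gives $(p^m-1)_{2'}\mid n_{2'}$, but that alone is not enough: you also need to use that $Q$ is a $2$-group, i.e.\ $a/m$ is a $2$-power, which by a lifting-the-exponent argument yields $v_\ell(q-1)=v_\ell(p^m-1)$ for every odd prime $\ell\mid p^m-1$; only then does the gcd collapse. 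Your sketch never surfaces this input. The paper sidesteps all of this by observing that the hypothesis of (5) is exactly the hypothesis of \cref{lem:whenSN2Ssimple}(iii) for the smaller group $\PGL_n(p^m)$, so that $z$ (lying in $\GL_n(p^m)$ and centralising a Sylow $2$-subgroup there) must have trivial image in $\PGL_n(p^m)$; this reduction is slicker and avoids the LTE-type arithmetic entirely.

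Finally, the converse direction (producing a nontrivial $Q$-fixed element when none of (1)--(5) holds) is compressed to a single sentence, but this is where the paper does its most careful casework (IIA/IIB), in particular reducing, when a field automorphism is present, to showing that $p^m-1$ is not a $2$-power. That reduction and the ensuing explicit construction of $z$ with $\lambda_1=\lambda_2^b$, $b\equiv -2^{r_2-r_1}\pmod{(q-\varepsilon)_{2'}}$, is nontrivial and cannot be waved away as ``a short check.''
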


\begin{rem}
Since the involutary field automorphism $F_q$ induces the map $\phi$ on $\GU_n(q)$, condition (2) in the case $\epsilon=-1$ includes the case that $Q$ contains any field automorphism whose order is a power of $2$.
\end{rem}

\begin{proof}[of \cref{lem:whenSN2S}]
Let $P$ be a Sylow $2$-subgroup of $G$ stabilized by $Q$.  Specifically, we may choose $P$ as in the setup for \eqref{CF1}.  

(I) First suppose that one of (1), (2), (3), (4), or (5) holds.  Note that in case (1), the statement is certainly true, since then $N_G(P)=PZ$, so $C_{N_G(P)/PZ}(Q)=1$.  Hence we may assume that $G/Z$ does not have a self-normalising Sylow $2$-subgroup and that $Q$ contains an outer automorphism.  Specifically, either $Q$ contains a graph automorphism (in case $\epsilon =1$) or involutary field automorphism (in case $\epsilon = -1$), which we identify with $\phi$ on $G$, up to conjugation in $\widetilde{G}$; or $\epsilon=1$ and $Q$ contains a field automorphism, which we identify as $F_{p^m}$ on $G$, up to conjugation in $\widetilde{G}$, for some $m\geq 1$.  Write $\varphi$ for the corresponding graph or field automorphism, respectively.  We will show that $C_{N_G(P)/PZ}(\varphi)=1$.  Write $\overline{N}:=N_G(P)/PZ$ and let $\overline{g}$ denote the image of an element $g\in N_G(P)$ in $\overline{N}$.  Suppose $g\in N_G(P)$ satisfies that $\overline{g}\in C_{\overline{N}}(\varphi)$.  That is, $\overline{g}$ is fixed by $\varphi$.  

 Write $n$ as in \cref{2adic}, so that by \cref{CF1,CF3} we have $g=xz$ for some $x\in P$ and $z=\bigoplus_{j=1}^t\lambda_jI_{2^{r_j}}$ as in \eqref{CF3} such that $\prod_{j=1}^t \lambda_j^{2^{r_j}} = 1$.  Then observing the action of $\varphi$ on the $2'$-part of $g$, we see $\varphi(z)=zy$ for some $y\in Z$ of odd order.  Write $y=\eta I_{n}$ for some $(n, q-\epsilon)$-root of unity $\eta$ in $\mathbb{F}_{\overline{q}}^\times$.  Then  since the block sizes $2^{r_j}$ are distinct, we must have that $\lambda_j\eta=\lambda_j^{-1}$ or   $\lambda_j^{p^m}$, respectively, for each $1\leq j\leq t$.  

(IA) Hence if condition (2), (3), or (4) holds, then there is some integer $c\geq 1$ such that $\lambda_j^{2^c}=\eta$ for each $1\leq j\leq t$.  (Recall that in situation (3) $p-1$ is a power of $2$, and in situation (4), $p^m=9$, so $p^m-1$ is also a power of $2$.)  Then in these cases, for each $j$ there is a $2^c$-root of unity $\zeta_j$ satisfying $\lambda_1=\zeta_j\lambda_j$, and we may write $z$ as the product of $\lambda_1 I_n$ and a diagonal matrix $d$ whose diagonal entries are $2$-power roots of unity.  Further, since $z$ has determinant $1$, $|d|$ has $2$-power order, and the multiplicative order of $\lambda_1$ is odd, it follows that $\lambda_1 I_n\in Z$ and $d\in P$.  We therefore see that $g\in PZ$, so that $\overline{g}=1$, yielding that in cases (2), (3), and (4), $C_{\overline{N}}(\varphi)=1$. 

(IB) Now assume condition (5) holds, so that $t=2$, $\epsilon=1$, and $(p^m-1)_{2'}=\gcd(n, p^m-1)_{2'}$.  Note then that $\PSL_n(p^m)$ and $\PGL_n(p^m)$ have self-normalising Sylow $2$-subgroups, see \cref{lem:whenSN2Ssimple}.  Note that $\lambda_1^{p^m-1}=\eta=\lambda_2^{p^m-1}$, so that $\lambda_1=\zeta\lambda_2$, for some $(p^m-1)$-root of unity $\zeta$ in $\mathbb{F}_q^\times$.  
 
 Then as an element of $\GL_n(q)$, we may write $z$ as the product of the central element $\lambda_1 I_n$ and a diagonal matrix $d$ whose diagonal entries are $(p^m-1)$-roots of unity.  In particular, $d\in \GL_n(p^m)$ is an element centralising a Sylow $2$-subgroup $\wt{P}_m$ contained in $\wt{P}$ (see the constructions in \cite{carter-fong:1964:the-Sylow-2-subgroups}), where $\wt{P}$ is a Sylow $2$-subgroup of $\GL_n(q)$ such that $P=\wt{P}\cap G$.  Hence the image of $z$ in $G/Z\cong GZ(\wt{G})/Z(\wt{G})$ must be trivial, since $\PGL_n(p^m)$ has a self-normalising Sylow $2$-subgroup and $z$ has odd order.  Then again, $\overline{g}=1$, yielding that $C_{\overline{N}}(\varphi)=1$ in case (5) as well.
 
 (II) Now, assume that none of (1) to (5) hold.   We will show that $C_{N_G(P)/PZ}(Q)\neq 1$, so that $GQ/Z$ does not have a self-normalising Sylow $2$-subgroup.   We do this by exhibiting a nontrivial element of $\overline{N}:=N_G(P)/PZ$ which is fixed by all possible elements of $Q$.  Note that we may assume $Q$ contains an outer automorphism.  
 
Since (1) does not hold, we see that neither $n$ nor $(q-\epsilon)$ is a power of $2$, see \cref{lem:whenSN2Ssimple}. Hence writing $n$ as in \eqref{2adic}, we see $t\geq 2$ and there exist nonidentity elements of the the form $z=\oplus_{j=1}^t \lambda_j I_{2^{r_j}}$ as in \eqref{CF3}.  Since (2) does not hold, $Q$ does not contain $\phi$ up to conjugation in $\wt{G}$.  Further, any diagonal automorphism in $Q$ is induced by the quotient group $\wt{P}/P$, and therefore is centralised by such a $z$ by construction.  Hence it suffices to exhibit a $z$ such that $\varphi(z)=z$ for each field automorphism $\varphi$ contained in $Q$, the $\lambda_j$ for $1\leq j\leq t$ are not all the same, and $\lambda_1^{2^{r_1}}\cdot...\cdot\lambda_t^{2^{r_t}}=1$. 
 
(IIA) Suppose that $Q$ contains no field automorphisms.  (In particular, this is the case if $\epsilon=-1$.)  Let $\lambda_j=1$ for $j>2$ and let $\lambda_2$ be a primitive $(q-\epsilon)_{2'}$ root of unity in $\mathbb{F}_{\overline{q}}^\times$ and $\lambda_1=\lambda_2^{b}$, where $b\equiv -2^{r_2-r_1}\pmod {(q-\epsilon)_{2'}}$. (Note that this is possible since $(q-\epsilon)_{2'}\neq1$ and  $2$ is invertible modulo $(q-\epsilon)_{2'}$.)  Then the determinant of $z$ is 
$$\lambda_1^{2^{r_1}}\cdot\lambda_2^{2^{r_2}}=\lambda_2^{2^{r_1}b}\cdot\lambda_2^{2^{r_2}}=\lambda_2^{-2^{r_1}(2^{r_2-r_1})}\cdot\lambda_2^{2^{r_2}}=\lambda_2^{-2^{r_2}+2^{r_2}}=1,$$ so that $z\in G$.  Further, if $t>2$, then the $\lambda_j$ are not all the same, so $z$ is not central, and the proof is complete in this case.  

If $t=2$, then since (1) does not hold, we know by \cref{lem:whenSN2Ssimple} that $\gcd(n,q-\epsilon)_{2'}\neq(q-\epsilon)_{2'}$, so $\gcd(2^{r_1-r_2}+1, q-\epsilon)_{2'}\neq(q-\epsilon)_{2'}$.  This yields that $b\not\equiv1\bmod{(q-\epsilon)_{2'}}$, so $\lambda_1\neq\lambda_2$, and $z$ is again not central.  

(IIB) Now assume that $\epsilon=1$ and that $Q$ contains a field automorphism.  Write $q=p^a$.  Without loss, we may identify the generator of the subgroup of $Q$ consisting of field automorphisms as $F_p^m$ for some $m|a$.   Further, it suffices to assume that $F_p^m$ generates the largest $2$-group of automorphisms possible without inducing conditions (3)-(5).  Note that since (5) does not hold, we have $(p^m-1)_{2'}\neq \gcd(n, p^m-1)_{2'}$ if $t=2$.
   
Note that if $(p-1)_{2'}\neq1$, then also $(p^m-1)_{2'}\neq 1$.  If $p-1$ and $a$ are both powers of $2$, then since neither (3) nor (4) hold, we may assume that $2|m$ when $p\neq 3$ and that $4|m$ when $p=3$.  Then $(p^m-1)_{2'}\neq 1$, since both of $p-1$ and $p+1$ cannot simultaneously be a power of 2 unless $p=3$, in which case $3^4-1$ is not a $2$-power.   If $p-1$ is a power of $2$ but $a$ is not, then we may assume that $m$ is divisible by $a_{2'}$, the odd part of $a$.  Then $(p^m-1)_{2'}$ is divisible by the $a_{2'}$`th cyclotomic polynomial evaluated at $p$, which is odd since $a_{2'}$ and $p$ are.   

Hence in all cases, we may assume $p^m-1$ is not a $2$-power.  Then repeating the argument in (IIA) with $q$ replaced with $p^m$, we may choose $\lambda_1, \lambda_2\in\mathbb{F}_{p^m}^\times$ so that $z$ is non-central and lies in $\SL_n(p^m)$.  Hence $z$ is fixed by the field automorphisms in $Q$ and has the required form. 
\end{proof}

\begin{proposition}\label{prop:typeAconjFI}
If $G$ is perfect then the simple group $G/Z = \PSL_n^\epsilon(q)$ satisfies \cref{cond:conjFI}.
\end{proposition}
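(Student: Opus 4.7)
The plan is to prove the contrapositive. Assuming $A = GQ/Z$ does not have a self-normalising Sylow $2$-subgroup, I will construct an $A$-invariant character $\chi \in \Irr_{2'}(S)$ that is not fixed by $\sigma$. Via the framework of \cref{pa:s/s-inv}, this reduces to finding a semisimple element $s \in [\widetilde{G}^\star,\widetilde{G}^\star] = \SL_n^\epsilon(q)$ satisfying conditions \crefrange{it:S1}{it:S4}; the restriction of the semisimple character $\widetilde{\chi}_s$ to $G$ will then descend to the desired character of $S = G/Z$.

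By \cref{lem:whenSN2S}, the failure of (1)--(5) provides rich structural information. Writing $n$ as in \eqref{2adic}, we obtain $t \geqslant 2$, $(q-\epsilon)_{2'} > 1$, and if $t = 2$ additionally $(q-\epsilon)_{2'} \neq \gcd(n,q-\epsilon)_{2'}$. In the split case $Q$ contains no graph automorphism; in the unitary case, the remark following \cref{lem:whenSN2S} together with the $2$-group hypothesis excludes all nontrivial field automorphisms from $Q$. When $Q$ does contain a nontrivial field automorphism (necessarily in the split case) let $F_{p^m}$ be a generator, so $a/m$ is a $2$-power; the case analysis in part (IIB) of the proof of \cref{lem:whenSN2S} then gives $(p^m - 1)_{2'} > 1$ and, when $t = 2$, also $(p^m - 1)_{2'} \neq \gcd(n, p^m - 1)_{2'}$. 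Set $q_0 = p^m$ in this case and $q_0 = q$ otherwise.

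I then choose $\lambda_2 \in \overline{\mathbb{F}_p}^\times$ to be a primitive $(q_0-\epsilon)_{2'}$-th root of unity and set $\lambda_1 = \lambda_2^b$ with $b \equiv -2^{r_2-r_1} \pmod{(q_0-\epsilon)_{2'}}$, which is well-defined because $2$ is invertible modulo this odd integer; the gcd hypothesis forces $\lambda_1 \neq \lambda_2$ when $t = 2$. The element $s \in \widetilde{G}^\star$ will be the rational semisimple element having eigenvalues $\lambda_1, \lambda_2, 1$ with respective multiplicities $2^{r_1}, 2^{r_2}, n - 2^{r_1} - 2^{r_2}$ (the trivial block omitted when $t = 2$); these multiplicities are pairwise distinct since $n - 2^{r_1} - 2^{r_2} < 2^{r_2}$. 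Rationality over $\mathbb{F}_q$ is automatic in the split case because $\lambda_j \in \mathbb{F}_{q_0} \subseteq \mathbb{F}_q$, and holds in the unitary case because $\lambda_j^{q+1} = 1$ gives $\lambda_j^{-q} = \lambda_j$, so the eigenvalue multiset is stable under the twisted Frobenius. Finally, the defining congruence for $b$ forces $\det s = \lambda_1^{2^{r_1}} \lambda_2^{2^{r_2}} = 1$.

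The verification of \crefrange{it:S1}{it:S4} then proceeds by standard observations: \cref{it:S1} uses that $C_{\widetilde{G}^\star}(s) \cong \GL_{2^{r_1}}^\epsilon(q) \times \GL_{2^{r_2}}^\epsilon(q) \times \GL_{n-2^{r_1}-2^{r_2}}^\epsilon(q)$ has odd index by \cref{thm:carter-fong}; conditions \cref{it:S2}, \cref{it:S3}, and the stronger statement $s \not\sim s^2 t$ for every $t \in Z(\widetilde{G}^\star)$ (which is what actually forces $\chi \neq \chi^\sigma$ after restriction) all follow because the pairwise distinct block multiplicities reduce any conjugacy equation to impossibility under $\lambda_2 \neq 1$ and $\lambda_1 \neq \lambda_2$; and \cref{it:S4} holds because the graph and involutary-field automorphisms are excluded from $Q$ by the above, while any field automorphism in $Q$ fixes each $\lambda_j$ pointwise by construction. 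The main obstacle in executing this plan is the delicate arithmetic used in part (IIB) of the proof of \cref{lem:whenSN2S}---especially the small Fermat-prime cases and those with $a$ a power of $2$---needed to secure both $(p^m - 1)_{2'} > 1$ and the sharper gcd condition when $t = 2$.
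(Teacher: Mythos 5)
Your proposal is correct and follows essentially the same route as the paper: it reduces via \cref{pa:s/s-inv} to constructing a suitable semisimple element $s$ and takes $s$ to be exactly the block-scalar element $z$ built in parts (IIA) and (IIB) of the proof of \cref{lem:whenSN2S}, whose class is $A$-stable by construction. Your added observation that one really needs $s^2$ not conjugate to $st$ for \emph{all} central $t$ (not merely \cref{it:S2} and \cref{it:S3} separately) is a worthwhile point of care, and your element satisfies it for the same reason: the pairwise distinct block multiplicities would force $\lambda_j^2 = \lambda_j\eta$ for every $j$, contradicting non-centrality.
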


\begin{proof}
By \cref{prop:p-2-conjFI} our assumption that $p$ is odd is not restrictive. We will argue this by proving the contrapositive, as in \cref{sec:condition-conjFI}. Specifically we assume $A = SQ = GQ/Z \leq\aut(S)$ is a group obtained by adjoining a $2$-group $Q$ of automorphisms to $S$. We wish to show that if $A$ does not have a self-normalizing Sylow $2$-subgroup, i.e., $Q$ is not as in (1) to (5) of \cref{lem:whenSN2S}, then there exists a character $\chi\in\irr_{2'}(S)$ which is $A$-invariant but not fixed by $\sigma$. We do this by finding a semisimple element $s \in [\widetilde{G}^{\star},\widetilde{G}^{\star}] = \SL_n^{\varepsilon}(q)$ satisfying the conditions in \cref{pa:s/s-inv}.

Since $A$ has no self-normalising Sylow $2$-subgroup, we see by \cref{lem:whenSN2Ssimple} that neither $n$ nor $q-\epsilon$ is a power of $2$.  Write
\begin{equation*}
n=2^{r_1}+2^{r_2}+...+2^{r_t}
\end{equation*}
with $t\geq 2$ and $r_1>r_2>...>r_t\geq 0$ for the $2$-adic expansion of $n$.  From the discussion in \cref{sec:sylow}, to ensure that $s$ has odd order and centralises a Sylow $2$-subgroup of $\wt{G}^\star$, it suffices to choose a nonidentity $s$ in the form $s=\bigoplus_{j=1}^t \lambda_j\cdot I_{2^{r_j}}$, as in \eqref{CF3}.

If $s$ is non-central, then since the block sizes $2^{r_j}$ are distinct, it follows that this choice of $s$ is not conjugate in $\GL_n^\epsilon(q)$ to $s^2$ or $st$ for any nontrivial $t\in Z(\GL_n^\epsilon(q))$.

We note further that $A$ does not contain a graph automorphism, by \cref{lem:whenSN2S}.  Hence it suffices to exhibit an $s$ as above such that: $\varphi(s)$ is conjugate to $s$ for each field automorphism $\varphi$ contained in $A$, the $\lambda_j$ for $1\leq j\leq t$ are not all the same, and $\lambda_1^{2^{r_1}}\cdot...\cdot\lambda_t^{2^{r_t}}=1$ so that $s\in \SL_n^\epsilon(q)$. 

Letting $s$ be the element $z$ obtained in parts (IIA) and (IIB) of the proof of \cref{lem:whenSN2S}, we see that the conjugacy class of $s$ is fixed by $A$ and has the required form.
\end{proof}

\section{Covering Odd Degree Characters of $\SL_n^{\varepsilon}(q)$}
\begin{pa}
We wish to show that $G$ satisfies the hypotheses of \cref{cond:conjIF}. As we already saw in the proof of \cref{prop:conjIF-p=2} it is important to know that a $\sigma$-invariant odd degree character of $G$ can be covered by a $\sigma$-invariant character of $\widetilde{G}$. Unfortunately, we cannot appeal to Glauberman's Lemma as in the proof of \cref{prop:conjIF-p=2}. The following gives the desired covering result.
\end{pa}

\begin{proposition}\label{prop:seriesabove}
Let $S = G/Z$ and suppose $Q \leqslant \Aut(S)$ is a $2$-group such that $GQ/Z$ has a self-normalising Sylow $2$-subgroup. Assume $\lambda\in\irr(Z)$ is $\sigma$-fixed and $Q$-invariant and let $\chi\in\irr_{2'}(G|\lambda)$ be $Q$-invariant. Then there exists an irreducible character $\widetilde{\chi} \in \Irr(\widetilde{G}|\chi)$ covering $\chi$ which is contained in a Lusztig series $\mathcal{E}(\widetilde{G},\widetilde{s})$ labelled by an element $\widetilde{s}$ of $2$-power order. In particular, we have $\widetilde{\chi}^{\sigma} = \widetilde{\chi}$.
\end{proposition}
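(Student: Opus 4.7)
The plan is to fix an arbitrary $\widetilde{\chi}^{\circ} \in \Irr(\widetilde{G}|\chi)$, say lying in the Lusztig series $\mathcal{E}(\widetilde{G}, \widetilde{t})$, and then to construct the desired $\widetilde{\chi}$ by twisting $\widetilde{\chi}^{\circ}$ by a suitable linear character of $\widetilde{G}/G$. By Clifford theory combined with the dictionary between linear characters of $\widetilde{G}/G$ and $Z(\widetilde{G}^{\star})$ recorded in \cite[13.30]{digne-michel:1991:representations-of-finite-groups-of-lie-type}, every character in $\Irr(\widetilde{G}|\chi)$ has the form $\widetilde{\chi}^{\circ}\widehat{z}$ for some $z \in Z(\widetilde{G}^{\star})$, and $\widetilde{\chi}^{\circ}\widehat{z} \in \mathcal{E}(\widetilde{G}, \widetilde{t}z)$. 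Since $z$ is central, it commutes with $\widetilde{t}$, so $(\widetilde{t}z)_{2'} = \widetilde{t}_{2'}z_{2'}$. Consequently, to produce an $\widetilde{s}$ of $2$-power order it will suffice to show that $\widetilde{t}_{2'}$ is a scalar matrix in $Z(\widetilde{G}^{\star})$; taking $z = \widetilde{t}_{2'}^{-1}$ then yields $\widetilde{s} = \widetilde{t}z = \widetilde{t}_2$, with $\widetilde{\chi} = \widetilde{\chi}^{\circ}\widehat{z}$ the desired character.

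The hypothesis $\lambda^{\sigma} = \lambda$ forces $\lambda$ to have $2$-power order, since $\sigma$ fixes exactly the $2$-power roots of unity. Combined with \cref{lem:central-char} and the central-character formulae available for $\GL_n^{\varepsilon}(q)$, this imposes a $2$-power restriction on $\det(\widetilde{t}) \in Z(\widetilde{G}^{\star})$; this is the right starting point, although by itself it does not force $\widetilde{t}_{2'}$ to be central. The $Q$-invariance of $\chi$ then dualises, via \cite[2.4]{navarro-tiep-turull:2008:brauer-characters-with-cyclotomic}, to the statement that the orbit $\{\widetilde{t}z \mid z \in Z(\widetilde{G}^{\star})\}$ of $\widetilde{G}^{\star}$-conjugacy classes is preserved under the dual action of every $\psi \in Q$. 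Using the classification of such $Q$ provided by \cref{lem:whenSN2S} together with the explicit form of $N_{\widetilde{G}}(\widetilde{P})$ from Carter--Fong (\cref{thm:carter-fong}), one may compare the eigenvalue multiplicities of $\widetilde{t}$ with the block-diagonal shape of the Sylow $2$-subgroup. A case-by-case analysis running parallel to parts (IA) and (IB) of the proof of \cref{lem:whenSN2S} and to the argument of \cref{prop:typeAconjFI} should then show that the $2'$-parts of the eigenvalues of $\widetilde{t}$ must all coincide, which is precisely the statement that $\widetilde{t}_{2'}$ is central.

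Once this is in hand, the ``In particular'' claim is immediate: since $\widetilde{s}$ has $2$-power order, $\widetilde{s}_{2'} = 1$, so \cref{lem:galoisLseries} applied with $\ell = 2$ and $b = b' = 1$ gives $\mathcal{E}(\widetilde{G}, \widetilde{s})^{\sigma} = \mathcal{E}(\widetilde{G}, \widetilde{s})$, and \cref{prop:GLn-sigma-fixed} yields $\widetilde{\chi}^{\sigma} = \widetilde{\chi}$. The main obstacle is the centrality of $\widetilde{t}_{2'}$: translating the abstract hypotheses on $Q$ and $\chi$ into the concrete statement that the $2'$-parts of individual eigenvalues of $\widetilde{t}$ all agree is the delicate step, and genuinely requires all three ingredients---the $\sigma$-invariance of $\lambda$, the $Q$-invariance of $\chi$, and the self-normalising Sylow hypothesis---together with the detailed structure of Sylow $2$-subgroups in type $\A$ afforded by Carter--Fong.
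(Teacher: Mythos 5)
Your overall strategy matches the paper's: fix an arbitrary $\widetilde{\chi}^{\circ} \in \Irr(\widetilde{G}|\chi)$ in $\mathcal{E}(\widetilde{G},\widetilde{t})$ and twist by a linear character of $\widetilde{G}/G$, dual to a central element, so as to cancel $\widetilde{t}_{2'}$; this hinges on showing $\widetilde{t}_{2'}$ is a scalar.

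There is, however, a genuine gap in how you propose to establish that centrality. The one piece of data that lets you compare the eigenvalue pattern of $\widetilde{t}$ against the Carter--Fong block structure is missing from your argument: because $\chi$ has \emph{odd degree}, the semisimple label $s = \iota^{\star}(\widetilde{t})$ satisfies that $[G^{\star}:C_{G^{\star}}(s)]_{p'}$ is odd, hence $s$ centralises (and therefore normalises) a Sylow $2$-subgroup of $G^{\star}$. From this one deduces that $\widetilde{t}$ normalises a Sylow $2$-subgroup $\widetilde{P}$ of $\widetilde{G}^{\star}$, and then \cref{thm:carter-fong} forces $\widetilde{t} = s_2 z$ with $s_2 \in \widetilde{P}$ and $z = \bigoplus_j \lambda_j I_{2^{r_j}}$ of the block-diagonal shape \eqref{CF3}. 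You invoke the comparison with Sylow structure but never secure the premise that $\widetilde{t}$ normalises a Sylow $2$-subgroup; without the odd-degree argument there is no constraint on the eigenvalue multiplicities of $\widetilde{t}$ at all, and the ``case-by-case analysis parallel to (IA) and (IB)'' cannot get off the ground, since those cases analyse an element already known to lie in $N_{\widetilde{G}}(\widetilde{P})$.

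Separately, the constraint you extract from $\lambda^{\sigma} = \lambda$ (that $\lambda$ has $2$-power order, hence $\det\widetilde{t}$ is constrained) is correct but is not used in the paper's proof and, as you yourself concede, is far too weak to force $\widetilde{t}_{2'}$ central: it bounds only a single product of eigenvalues. Once you insert the odd-degree step, the rest of your plan (Q-invariance dualising to invariance of the $Z(\widetilde{G}^{\star})$-orbit of $\widetilde{t}$, case split by \cref{lem:whenSN2S}, eigenvalue bookkeeping as in (IA)/(IB)) does align with the paper's proof, and the final invocation of \cref{lem:galoisLseries} and \cref{prop:GLn-sigma-fixed} is exactly right.
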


\begin{proof}
Let $\chi$ be as in the statement, so $\chi \in \irr(G)$ has odd degree. Then, in particular, $\chi$ lies in a series $\mathcal{E}(G, s)$ for some semisimple element $s \in G^\star$ for which $[G^\star : C_{G^\star}(s)]_{p'}$ is odd. This implies $s$ centralises, hence normalises, a Sylow $2$-subgroup of $G^\star$.

Now, the characters $\wt{\chi} \in \irr(\widetilde{G}|\chi)$ lying above $\chi$ are members of rational series of the form $\mathcal{E}(\widetilde{G},\widetilde{s})$, where $\widetilde{s} \in \widetilde{G}^\star$ satisfies $\iota^{\star}(\widetilde{s}) = s$, see \cite[Corollaire 9.7]{bonnafe:2006:sln}. We will write $\widetilde{Z}$ for the centre $Z(\widetilde{G}^\star)$ and denote by $\widetilde{P}$ a Sylow $2$-subgroup of $\widetilde{G}^\star$ such that $s$ centralises $\widetilde{P}\wt{Z}/\wt{Z}$. We aim to show that $\widetilde{s}$ may be chosen to have $2$-power order. If this is the case then by \cref{lem:galoisLseries,prop:GLn-sigma-fixed} we must have $\widetilde{\chi}^{\sigma} = \widetilde{\chi}$.

First, suppose that $Q$ is as in \cref{lem:whenSN2S}(1), so that $S$ has a self-normalising Sylow $2$-subgroup.  Then $\PGL^\epsilon_n(q)$ also has a self-normalising Sylow $2$-subgroup, so $s$ must be contained in the Sylow $2$-subgroup $\widetilde{P}\wt{Z}/\wt{Z}$ of $\PGL^\epsilon_n(q)$. Let $\wt{s}'=rz$ be a pre-image of $s$, where $r\in\wt{P}$ and $z\in\wt{Z}$. Then noting that $\wt{s}=\wt{s}'z^{-1}$ is another pre image of $s$, the claim is proved in this case.

Next, assume condition (2), (3), (4), or (5) of \cref{lem:whenSN2S} holds.  Then either $Q$ contains a graph automorphism (in case $\epsilon = 1$) or involutary field automorphism (in case $\epsilon = -1$), which we identify with $\phi$ on $\widetilde{G}\cong \widetilde{G}^\star\cong \GL^\epsilon_n(q)$; or $\epsilon=1$ and $Q$ contains a field automorphism, which we identify as $F_{p^m}$ on $\widetilde{G}\cong \widetilde{G}^\star\cong \GL_n(q)$, for some $m\geq 1$. By an abuse of notation, write $\varphi$ for $\phi$ or $F_{p^m}$, respectively, on $\widetilde{G}$ and $\widetilde{G}^\star$.  Then $\chi^\varphi=\chi$, and in particular, $\mathcal{E}(G,s)^\varphi=\mathcal{E}(G,s)$, yielding that the class $(s)$ is fixed by $\varphi$, by \cite[Corollary 2.4]{navarro-tiep-turull:2008:brauer-characters-with-cyclotomic}.

Let $\wt{s}\in \GL^\epsilon_n(q)$ be a pre-image of $s$.     Then if $x\in\wt{P}$, we see $\wt{s}x\wt{s}^{-1}\in x \wt{Z}$. But further, $\wt{s}x\wt{s}^{-1}$ has order a power of $2$, so must be contained in the unique Sylow $2$-subgroup $\wt{P}$ of $\wt{P}\wt{Z}$.  We therefore see that $\wt{s}$ normalises $\wt{P}$.  

Write $n=2^{r_1}+2^{r_2}+...+2^{r_t}$ with $r_1>r_2>...>r_t$ for the $2$-adic expansion of $n$.  From the discussion in \cref{sec:sylow}, we may then choose $\wt{s}$ in the form $\wt{s}=s_2z$, where $s_2\in \wt{P}$ and $z=\bigoplus_{j=1}^t \lambda_j\cdot I_{2^{r_j}}$, as in \eqref{CF3}.

Further, since $\varphi(s)$ is conjugate in $\PGL^\epsilon_n(q)$ to $s$, we see that $\varphi(\tilde{s})$ is conjugate in $\GL^\epsilon_n(q)$ to $\wt{s}y$ for some $y\in\wt{Z}$.  Then $ \varphi(z)$ is conjugate to $z y_{2'}$, where $y_{2'}$ denotes the odd part of $y$.  Let $\eta\in \mathbb{F}_{\overline{q}}^\times$ be such that $y_{2'}=\eta I_{n}$. Then as in part (I) of the proof of \cref{lem:whenSN2S}, since the block sizes $2^{r_j}$ are distinct, we must have that these eigenvalues satisfy $\lambda_j\eta=\lambda_j^{-1}$ or   $\lambda_j^{p^m}$, respectively, for each $1\leq j\leq t$.  

Hence if condition (2), (3), or (4) holds, arguing exactly as in part (IA) of the proof of \cref{lem:whenSN2S} yields that we may write $z$ as the product of $\lambda_1I_n\in\wt{Z}$ and a diagonal matrix whose diagonal entries are $2$-power roots of unity.  We may then replace $\wt{s}$ with $\wt{s}\lambda_1^{-1}$, which is also a pre-image of $s$ and has $2$-power order, completing the proof in this case.

Finally, assume condition (5) of \cref{lem:whenSN2S} holds, so that $t=2$, $\epsilon=1$, and $(p^m-1)_{2'}=\gcd(n, p^m-1)_{2'}$.  Note then that $\PSL_n(p^m)$ and $\PGL_n(p^m)$ have self-normalising Sylow $2$-subgroups.  Further, by multiplying by the central element $\lambda_2^{-1}I_n$, we may assume that $z=\lambda_1 I_{2^{r_1}}\oplus I_{2^{r_2}}$, and $\eta=1$.  Therefore it must be that $\lambda_1^{p^m-1}=1$, yielding that $\lambda_1 I_{2^{r_1}}$ is an element of order dividing $(p^m-1)_{2'}$ in $Z(\GL_{2^{r_1}}(p^m))$.  That is, $z$ is contained in the centraliser in $\GL_n(p^m)$ of a Sylow $2$-subgroup $\wt{P}_m$ contained in $\wt{P}$ (see the constructions in \cite{carter-fong:1964:the-Sylow-2-subgroups}).  Then the image of $z$ in $\PGL_n(q)$ is an element of odd order in $\PGL_n(p^m)$ centralising $\wt{P}_m\wt{Z}/\wt{Z}$, which is self-normalising in $\PGL_n(p^m)$.  This yields that the image of $z$ in $\PGL_n(q)$ is trivial, so $s$ is a $2$-element.  Arguing as in the case that \cref{lem:whenSN2S}(1) holds, the proof is complete.
\end{proof}

\section{\cref{cond:conjIF} for Type $\A$}
\begin{pa}
We wish to understand the effect of the Galois automorphism $\sigma$ on the odd degree characters of $G$. For this we will use results of Navarro--Tiep on the extension of odd degree characters from $G$ to $\widetilde{G}$. Specifically we have the following slight refinement of results from \cite{navarro-tiep:2015:irreducible-representations-of-odd-degree}.
\end{pa}

\begin{lem}[Navarro--Tiep]\label{prop:extension-to-GLn}
Assume $\chi \in \Irr(G)$ is an odd degree character and $\widetilde{\chi} \in \Irr(\widetilde{G} |\chi)$ covers $\chi$. Then one of the following holds:
\begin{enumerate}
	\item $\widetilde{\chi}(1) = \chi(1)$,

	\item $\widetilde{\chi}(1) = 2\chi(1)$ and $n = 2^r$ for some $r \geqslant 1$.
\end{enumerate}
\end{lem}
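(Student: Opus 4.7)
The plan is to combine Clifford theory with the main results of \cite{navarro-tiep:2015:irreducible-representations-of-odd-degree} and the $2$-part estimate \cref{cor:index-GL}. Since $G \trianglelefteq \widetilde G$ with $\widetilde G / G$ abelian of order $q-\varepsilon$, standard Clifford theory gives $\widetilde\chi(1) = e\,t\,\chi(1)$, where $t = [\widetilde G : I_{\widetilde G}(\chi)]$ and $e$ is the Clifford multiplicity. By \cite[Proposition 10]{lusztig:1988:reductive-groups-with-a-disconnected-centre} the restriction $\Res^{\widetilde G}_G(\widetilde\chi)$ is multiplicity free, so $e = 1$ and $\widetilde\chi(1) = t\,\chi(1)$ with $t$ a divisor of $q-\varepsilon$. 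Since $\chi(1)$ is odd we have $\widetilde\chi(1)_2 = t_2$, and the whole task reduces to showing $t \in \{1, 2\}$, with $t = 2$ only if $n = 2^r$.

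Next, I would read off from \cite{navarro-tiep:2015:irreducible-representations-of-odd-degree} the fact that for every $\chi$ of odd degree the ratio $t = \widetilde\chi(1)/\chi(1)$ is a $2$-power, and moreover $t = 1$ whenever $n$ is not a power of $2$. This immediately establishes case (a) in the non-exceptional case, and in the exceptional case $n = 2^r$ it reduces the problem to ruling out $t \geqslant 4$.

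For the latter I would place $\widetilde \chi \in \mathcal{E}(\widetilde G, \widetilde s)$ and apply Jordan decomposition,
\[
\widetilde\chi(1) = [\widetilde G^{\star} : C_{\widetilde G^{\star}}(\widetilde s)]_{p'}\,\psi(1),
\]
where $\psi$ is a unipotent character of the centraliser $C_{\widetilde G^{\star}}(\widetilde s) \cong \prod_i \GL_{a_i}^{\varepsilon_i}(q^{d_i})$ with $\sum_i a_i d_i = n = 2^r$. Both the semisimple index factor and $\psi(1)$ contribute non-negative powers of $2$ to $\widetilde\chi(1)_2 = t_2$. I would then combine \cref{cor:index-GL} with the block partition $n = \sum_i a_i d_i$ to show that any decomposition finer than the trivial partition or the equal split $2^{r-1} + 2^{r-1}$ forces $[\widetilde G^{\star} : C_{\widetilde G^{\star}}(\widetilde s)]_2 \geqslant 4$, hence $t_2 \geqslant 4$, contrary to the Navarro--Tiep bound. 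The two surviving centraliser types correspond precisely to $t = 1$ and $t = 2$, matching cases (a) and (b) of the statement.

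The main technical obstacle will be this last combinatorial step: carefully tracking the $2$-adic valuations of both the semisimple index and the unipotent degree $\psi(1)$, ensuring that they cannot conspire to reduce $\widetilde\chi(1)_2$ below the lower bound furnished by \cref{cor:index-GL}. In particular, the hypothesis that $\chi(1)$ is odd must be used to force the components of $\psi$ to be labelled by $2$-core partitions, so that the $\psi(1)_2$ factor does not interfere with the rigidity imposed on the centraliser block partition by the $2$-adic structure of $n = 2^r$.
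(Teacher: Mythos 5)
Your plan uses the same essential ingredients as the paper's proof --- Clifford theory, the Navarro--Tiep results, Jordan decomposition, and \cref{cor:index-GL} --- but you read a weaker statement out of \cite{navarro-tiep:2015:irreducible-representations-of-odd-degree} than the one the paper actually uses, and this makes the remaining work substantially heavier. The paper cites Lemmas 4.5 and 4.6 of that reference, which give directly that either $\widetilde\chi(1) = \chi(1)$, or $\widetilde\chi(1) = 2\chi(1)$ with $C_{\widetilde G}(\widetilde s) \cong \GL_m^\varepsilon(q)^2$ and $n = 2m$. With the ratio bounded by $2$ and the centraliser shape pinned down, the only step left is to apply \cref{cor:index-GL} to the $2$-adic expansion of $m$: since $[\widetilde G:C_{\widetilde G}(\widetilde s)]_2$ divides $\widetilde\chi(1) = 2\chi(1)$ with $\chi(1)$ odd, that expansion must have a single term, so $m$ and hence $n = 2m$ is a power of $2$.

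You instead propose to read off only that $\widetilde\chi(1)/\chi(1)$ is a $2$-power, equal to $1$ unless $n$ is a $2$-power, and then to rule out $\widetilde\chi(1)/\chi(1) \geqslant 4$ when $n = 2^r$ by a combinatorial analysis of all centralisers $\prod_i \GL_{a_i}^{\varepsilon_i}(q^{d_i})$ with $\sum_i a_i d_i = 2^r$. This is where the plan becomes much harder than needed: \cref{cor:index-GL} computes only $[\GL_{2m}^\varepsilon(q):\GL_m^\varepsilon(q)^2]_2$, so it does not cover general partitions or extension-field factors $d_i > 1$, and you would additionally need to control the $2$-part of the unipotent degree $\psi(1)$. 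Carrying this through would amount to re-deriving a large part of the Navarro--Tiep lemmas; quoting Lemmas 4.5 and 4.6 directly, as the paper does, lets you skip the combinatorial step entirely and reduces the proof to a single application of \cref{cor:index-GL}.
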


\begin{proof}
The case $n=2$ is easily checked so we may assume that $n > 2$. Let us assume $\widetilde{\chi} \in \mathcal{E}(\widetilde{G},\widetilde{s})$ then according to \cite[Lemma 4.5, Lemma 4.6]{navarro-tiep:2015:irreducible-representations-of-odd-degree} we have either (a) holds or the following holds
\begin{itemize}
	\item $\widetilde{\chi}(1) = 2\chi(1)$ and $C_{\widetilde{G}}(\widetilde{s}) \cong \GL_m^{\varepsilon}(q)^2$ with $n = 2m$.
\end{itemize}
By Lusztig's Jordan decomposition of characters we see that the index $[\widetilde{G} : C_{\widetilde{G}}(\widetilde{s})]_{p'}$ divides $\widetilde{\chi}(1)$, see \cite[Remark 13.24]{digne-michel:1991:representations-of-finite-groups-of-lie-type}, so $[\widetilde{G} : C_{\widetilde{G}}(\widetilde{s})]_2$ divides $\widetilde{\chi}(1)$ because $p$ is odd. If $m = 2^{r_1} + \cdots + 2^{r_t}$ is the $2$-adic expansion of $m$ then we have $[\widetilde{G} : C_{\widetilde{G}}(\widetilde{s})]_2 = 2^t$ by \cref{cor:index-GL}. However $\chi(1)$ is odd so we must have $t = 1$, which proves the statement.
\end{proof}

\begin{pa}
By \cref{prop:GLn-sigma-fixed} we know the effect of $\sigma$ on the irreducible characters of $\widetilde{G}$. Hence when an odd degree character of $G$ extends to $\widetilde{G}$ we can easily determine the effect of $\sigma$ on such a character. Thus we are left with considering the second case of \cref{prop:extension-to-GLn}.  For this case, we record the following observations.
\end{pa}

\begin{lemma}\label{lem:classinvariant}
Assume $\chi \in \Irr(G)$ is an irreducible character and $\widetilde{\chi} \in \Irr(\widetilde{G}| \chi)$ covers $\chi$. If the $G$-conjugacy class of $g \in G$ is invariant under conjugation by $\widetilde{G}$, then $\chi(g) = \widetilde{\chi}(1)\widetilde{\chi}(g)/\chi(1)$. In particular, we have $\chi(g)^{\sigma} = \chi(g)$ if and only if $\widetilde{\chi}(g)^{\sigma} = \widetilde{\chi}(g)$.
\end{lemma}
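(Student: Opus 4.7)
The plan is to apply Clifford theory to $\Res^{\widetilde{G}}_G(\widetilde{\chi})$ and then use the class-invariance hypothesis to see that the values of all $\widetilde{G}$-conjugates of $\chi$ at $g$ coincide.

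More precisely, since $G \trianglelefteq \widetilde{G}$ and $\widetilde{\chi}$ covers $\chi$, Clifford's theorem gives
\begin{equation*}
\Res^{\widetilde{G}}_G(\widetilde{\chi}) = e\sum_{i=1}^{k} \chi_i,
\end{equation*}
where $\chi_1 = \chi, \chi_2, \dots, \chi_k$ are the distinct $\widetilde{G}$-conjugates of $\chi$ and $e = \langle \Res^{\widetilde{G}}_G(\widetilde{\chi}), \chi\rangle_G$ is the common multiplicity. For each $i$ we may write $\chi_i = {}^{\widetilde{g}_i}\chi$ for some $\widetilde{g}_i \in \widetilde{G}$, so that $\chi_i(g) = \chi(\widetilde{g}_i^{-1} g \widetilde{g}_i)$.

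Now I would invoke the hypothesis: since the $G$-conjugacy class of $g$ is stable under conjugation by $\widetilde{G}$, the element $\widetilde{g}_i^{-1} g \widetilde{g}_i$ lies in the $G$-class of $g$, and so $\chi_i(g) = \chi(g)$ for every $i$. Evaluating the displayed Clifford decomposition at $g$ and at $1$ then yields
\begin{equation*}
\widetilde{\chi}(g) = ek\,\chi(g), \qquad \widetilde{\chi}(1) = ek\,\chi(1),
\end{equation*}
from which the stated identity $\chi(g)/\chi(1) = \widetilde{\chi}(g)/\widetilde{\chi}(1)$ follows immediately upon dividing.

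For the second assertion, the ratio $\chi(1)/\widetilde{\chi}(1)$ is a positive rational number (a reciprocal of a positive integer), and so is fixed by every element of $\Gal(\mathbb{Q}_{|\widetilde{G}|}/\mathbb{Q})$, and in particular by $\sigma$. Hence $\chi(g) = (\chi(1)/\widetilde{\chi}(1)) \cdot \widetilde{\chi}(g)$ is $\sigma$-fixed if and only if $\widetilde{\chi}(g)$ is. There is no real obstacle here; the only thing to check carefully is that class-invariance of $g$ under $\widetilde{G}$-conjugation indeed implies equality of the values $\chi_i(g)$, which is immediate once the $\chi_i$ are expressed as conjugates of $\chi$.
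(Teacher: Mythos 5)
Your proposal is correct and is essentially the argument the paper has in mind: its one-line proof appeals to the multiplicity-freeness of $\Res_G^{\widetilde{G}}(\widetilde{\chi})$ (Lusztig's result, $e=1$), and your Clifford-theoretic expansion spells this out, even showing the multiplicity $e$ is irrelevant. Note that what you actually derive is $\chi(g) = \chi(1)\widetilde{\chi}(g)/\widetilde{\chi}(1)$, which is the correct identity; the formula as printed in the lemma has $\chi(1)$ and $\widetilde{\chi}(1)$ interchanged (a typo in the paper, harmless for the ``in particular'' clause, which is all that is used later).
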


\begin{proof}
This follows immediately from the fact that $\Res_G^{\widetilde{G}}(\widetilde{\chi})$ is multiplicity free.
\end{proof}

\begin{lemma}\label{lem:unipsquare}
Recall our assumption that $p$ is odd and let $\chi\in\irr(G)$ be an irreducible character.  Then $\chi(u)^\sigma = \chi(u^2)$ for any unipotent element $u \in G$.
\end{lemma}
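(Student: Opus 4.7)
The plan is to use the basic fact that unipotent elements in characteristic $p$ have $p$-power order, combined with the explicit action of $\sigma$ on roots of unity. Since $p$ is odd by assumption, any unipotent $u \in G$ has odd order, so every eigenvalue of a matrix representation at $u$ is a $p$-power root of unity, in particular a $2'$-root of unity, which is exactly where $\sigma$ acts by squaring.

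More concretely, let $\rho : G \to \GL(V)$ be a representation affording $\chi$, and let $\zeta_1, \dots, \zeta_d$ be the eigenvalues of $\rho(u)$ (with multiplicity). Each $\zeta_i$ is a root of unity of order dividing the (odd) order of $u$, so $\sigma(\zeta_i) = \zeta_i^2$. Then
\begin{equation*}
\chi(u)^\sigma = \Bigl(\sum_{i=1}^d \zeta_i\Bigr)^\sigma = \sum_{i=1}^d \zeta_i^2 = \operatorname{tr}(\rho(u)^2) = \operatorname{tr}(\rho(u^2)) = \chi(u^2),
\end{equation*}
where the penultimate equality uses that the eigenvalues of $\rho(u)^2$ are the squares of those of $\rho(u)$. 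This is essentially a one-line argument, with no obstacle, since the only ingredients are the definition of $\sigma$ (it squares $2'$-roots of unity) and the observation that eigenvalues of $\rho(u)$ are $2'$-roots of unity when $p$ is odd. Thus there is no real ``main obstacle'' here; the lemma is simply a clean statement of this fact, intended for use in the next section to relate $\sigma$-behaviour on unipotent elements with taking squares.
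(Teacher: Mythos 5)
Your proof is correct and follows essentially the same route as the paper's: diagonalize $\mathfrak{X}(u)$, note each eigenvalue is a $2'$-root of unity since $u$ is a $p$-element with $p$ odd, and apply $\sigma$ termwise.
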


\begin{proof}
Let $\mathfrak{X}$ be a complex representation affording $\chi$.  Then $\chi(u)$ is the sum $\sum_{i=1}^{\chi(1)}\lambda_i$ of eigenvalues $\lambda_i$ of the matrix $\mathfrak{X}(u)$ and $\chi(u^2)$ is the sum $\sum_{i=1}^{\chi(1)}\lambda_i^2$ of eigenvalues of the matrix $\mathfrak{X}(u)^2$.  Hence, since $u$ is a $2'$-element, each $\lambda_i$ is a $2'$-root of unity, so $\chi(u)^\sigma=\sum_{i=1}^{\chi(1)} \lambda_i^2=\chi(u^2)$.
\end{proof}

\begin{pa}\label{pa:levi-id}
As we will see below, the case when $n=4$, i.e., when $G = \SL_4^{\varepsilon}(q)$, will need to be treated separately with ad-hoc methods. In particular, we will need some knowledge of the Levi subgroup
\begin{equation}\label{eq:levi}
\bL = \{\diag(A,B) \mid A,B \in \GL_2(\mathbb{K}) \text{ and }\det(B) = \det(A)^{-1}\} \leqslant \bG = \SL_4(\mathbb{K}).
\end{equation}
Note this subgroup is stable by the Frobenius endomorphism $F$. Let $W = N_{\bG}(\bT_0)/\bT_0$ be the Weyl group of $\bG$ with respect to $\bT_0$ and let $W_{\bL} = N_{\bL}(\bT_0)/\bT_0$ be the corresponding parabolic subgroup determined by $\bL$.

The section $N_{\bG}(\bL)/\bL$ of $\bG$ is isomorphic to the section $N_W(W_{\bL})/W_{\bL}$ of $W$, which has order $2$. Identifying $W$ with $\mathfrak{S}_4$ in the usual way, we have the non-trivial coset of $N_W(W_{\bL})/W_{\bL}$ is represented by the permutation $(1,3)(2,4)$. Let $n \in N_{\bG}(\bT_0)$ be the permutation matrix representing $(1,3)(2,4)$; note this matrix has determinant $1$. If $\imath_n : \bL \to \bL$ denotes the conjugation map defined by $\imath_n(l) = nln^{-1}$ then the map $\imath_nF : \bL \to \bL$ is a Frobenius endomorphism of $\bL$ stabilising $\bT_0$. Now assume $\bM = {}^g\bL$ is an $F$-stable $\bG$-conjugate of $\bL$. After possibly replacing $g$ by $gl$, for some $l \in \bL$, we may assume that conjugation by $g$ identifies the pair $(\bM,F)$ with either $(\bL,F)$ or $(\bL,\imath_nF)$. With this we are ready to prove the following lemmas.
\end{pa}

\begin{lem}\label{lem:L-u-u2-conj}
Assume $n = 4$ so that $G = \SL_4^{\varepsilon}(q)$ and recall that $p$ is odd. If $\bM = {}^g\bL$ is an $F$-stable $\bG$-conjugate of $\bL$, then any rational unipotent element $u \in \bM^F$ is $\bM^F$-conjugate to $u^2$.
\end{lem}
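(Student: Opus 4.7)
Plan:

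By \cref{pa:levi-id}, after replacing $u$ by a $\bG$-conjugate I may assume $(\bM, F) = (\bL, F_0)$ with $F_0 \in \{F, \imath_n F\}$, so the Frobenius on $\bL$ is one of $F_q$, $F_q\phi$, $\imath_n F_q$, or $\imath_n F_q\phi$. In each of the four resulting subcases, $\bL^{F_0}$ identifies with an explicit subgroup of $\GL_2(K) \times \GL_2(K)$ or, in the twisted-by-$\imath_n$ cases, of $\GL_2(K)$ for an appropriate extension $K$ of $\mathbb{F}_q$, cut out by a determinant or norm condition; so it suffices to produce a conjugator $X \in \bL^{F_0}$ with $XuX^{-1}=u^2$ in each case.

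My first step will be to reduce $u$ within its $\bL^{F_0}$-orbit to a standard form $\diag(u_\alpha^a, u_\alpha^b)$, where $u_\alpha$ denotes the standard upper-triangular unipotent with $1$ in the upper-right entry and $a,b \in K$ (either possibly zero). This is achievable because, for a non-identity unipotent $A \in \GL_2(K)$, the set of $g \in \GL_2(K)$ with $gAg^{-1}$ upper triangular unipotent is a coset of the standard Borel subgroup, whose determinants fill out all of $K^\times$; hence any constraint on determinants coming from the defining condition of $\bL^{F_0}$ can be arranged block-by-block.

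The core computation will then be to produce a conjugator $X \in \bL^{F_0}$ satisfying $X\diag(u_\alpha^a, u_\alpha^b)X^{-1} = \diag(u_\alpha^{2a}, u_\alpha^{2b})$. Since $p$ is odd, the set of $Y \in \GL_2(K)$ with $Yu_\alpha^c Y^{-1} = u_\alpha^{2c}$ (for $c \neq 0$) forms a coset of $C_{\GL_2(K)}(u_\alpha)$ whose determinants are exactly $2(K^\times)^2$. In the untwisted subcases the condition $\det(X_1)\det(X_2) = 1$ is easily met by taking $\det(X_1) = 2$ and $\det(X_2) = 1/2$, both of which lie in $2(K^\times)^2$. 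The main obstacle will be the twisted subcases, where a norm condition $N_{K/\mathbb{F}_q}(\det X) = 1$ must also be satisfied; writing $\det X = 2c^2$ with $c \in K^\times$, this reduces to $N_{K/\mathbb{F}_q}(c)^2 = 2^{-(q+1)}$, which will be solvable because $2^{-(q+1)/2} \in \mathbb{F}_q^\times$ (as $p$ is odd) and the norm map $N_{K/\mathbb{F}_q}: K^\times \twoheadrightarrow \mathbb{F}_q^\times$ is surjective.
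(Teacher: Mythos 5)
Your overall strategy---reduce to an explicit form of $\bL^{F_0}$, normalize the unipotent element, then solve for a conjugator while tracking a determinant condition---is viable and genuinely different from the paper's argument. The paper instead works at the level of the algebraic group: $u$ and $u^2$ lie in the same $\bL$-orbit by Jordan form, the component group $C_{\bL}(u)/C_{\bL}^\circ(u)$ has order at most $2$, and the Lang--Steinberg parametrisation of rational classes reduces everything to exhibiting a single torus element $t$ with ${}^tu=u^2$ and $t^{-1}F'(t)\in C_{\bT_0}^\circ(u)$; the element $t=\diag(2a,a,a^{-1},(2a)^{-1})$ then works uniformly in all four cases $(\bL,F')$. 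That argument avoids the case-by-case matrix algebra entirely.

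However, there is a concrete gap in your case analysis. Because $\phi(x)=(x^{-T})^{n_0}$ with $n_0$ the permutation matrix for the longest element $(1,4)(2,3)$, the automorphism $\phi$ \emph{also} interchanges the two $2\times 2$ blocks of $\bL$ (indeed $\phi(\diag(A,B)) = \diag(J_2B^{-T}J_2, J_2A^{-T}J_2)$ where $J_2=\left(\begin{smallmatrix}0&1\\1&0\end{smallmatrix}\right)$). Consequently your dichotomy ``untwisted $\Leftrightarrow$ two blocks over $K$'' vs.\ ``twisted-by-$\imath_n$ $\Leftrightarrow$ one $\GL_2(K)$ with a norm condition'' is incorrect. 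The actual structures are: $\bL^{F_q}\cong\{(A,B)\in\GL_2(\mathbb{F}_q)^2:\det A\det B=1\}$; $\bL^{\imath_nF_q}\cong\{A\in\GL_2(\mathbb{F}_{q^2}):N_{\mathbb{F}_{q^2}/\mathbb{F}_q}(\det A)=1\}$; $\bL^{F_q\phi}\cong\{A\in\GL_2(\mathbb{F}_{q^2}):\det A\in\mathbb{F}_q^\times\}$ (a single block, even though no $\imath_n$ appears, and the constraint is not a norm-$1$ condition); and $\bL^{\imath_nF_q\phi}\cong\{(A,B)\in\GU_2(q)^2:\det A\det B=1\}$ (two blocks again, but unitary, so upper-triangular representatives and conjugators must satisfy a Hermitian condition, not just a determinant one). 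Your plan as written misidentifies the last two cases and hence the subsequent determinant bookkeeping does not directly apply to them. The fix is routine---e.g.\ in the $\GU_2$ blocks one uses a conjugator of the form $\diag(2\lambda,\lambda)$ with $N(\lambda)=1/2$, which lies in $\GU_2(q)$---but it needs to be carried out; as stated, the proposal would go wrong in those cases.
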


\begin{proof}
By \cref{pa:levi-id} each pair $(\bM,F)$ can be identified with the pair $(\bL,F')$ where $F'$ denotes either $F$ or $nF$, hence it suffices to prove the statement for the pair $(\bL,F')$. The unipotent conjugacy classes of $\bL$ are parameterised by the Jordan normal form. Let $\mathcal{O} \subseteq \bL$ be a unipotent conjugacy class. Then $\mathcal{O}$ is invariant under the map $x \mapsto x^2$ because the elements have the same Jordan normal form.

Assume now that $\mathcal{O}$ is $F'$-stable and $u \in \mathcal{O}^{F'}$. As the component group $C_{\bL}(u)/C_{\bL}^{\circ}(u)$ has order at most $|Z(\bL)/Z^{\circ}(\bL)|=2$ we have either $\mathcal{O}^{F'}$ is a single $\bL^{F'}$-conjugacy class or it's a union of two such classes, see \cite[4.3.5]{geck:2003:intro-to-algebraic-geometry}. Therefore, it suffices to show that for one element $u \in \mathcal{O}^{F'}$ we have $u$ is $\bL^{F'}$-conjugate to $u^2$. Applying \cite[4.3.5]{geck:2003:intro-to-algebraic-geometry} it suffices to find an element $t \in \bT_0$ such that ${}^tu = u^2$ and $t^{-1}F(t) \in C_{\bT_0}^{\circ}(u) \leqslant C_{\bL}^{\circ}(u)$.

Let $J = \left[\begin{smallmatrix} 1 & 1\\0 & 1 \end{smallmatrix}\right]$ and let $u$ be one of the elements $\diag(J,J)$, $\diag(J,\mathrm{I}_2)$, $\diag(\mathrm{I}_2,J)$, or $\diag(\mathrm{I}_2,\mathrm{I}_2)$. These elements represent the unipotent conjugacy classes of $\bL$. Setting $t = \diag(2a,a,a^{-1},2^{-1}a^{-1})$, for some $a \in \mathbb{G}_m$, one easily checks that ${}^tu = u^2$ and $t^{-1}F'(t) \in C_{\bT_0}^{\circ}(u)$ as desired.
\end{proof}

\begin{lem}\label{lem:SL4-GGGRs}
Assume $n=4$ so that $G = \SL_4^{\varepsilon}(q)$ and recall that $p$ is odd. If $u \in G$ is a non-regular unipotent element then $u$ is $G$-conjugate to $u^2$. In particular, we have $\Gamma_u^{\sigma} = \Gamma_u$ for any non-regular unipotent element $u \in G$.
\end{lem}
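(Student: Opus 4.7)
The final ``in particular'' claim will follow immediately from the first part via \cref{prop:galois-aut-GGGR}: since $p$ is odd, every $p$-th root of unity is a $2'$-root of unity on which $\sigma$ acts by squaring, so that proposition with $n = 2$ gives $\Gamma_u^{\sigma} = \Gamma_{u^2}$. Combined with the $G$-invariance of GGGRs noted in \cref{pa:GGGR-rat-conj}, the conclusion $u \sim_G u^2$ will force $\Gamma_u^{\sigma} = \Gamma_u$. So the substantive task is to show that every non-regular unipotent $u \in G$ is $G$-conjugate to $u^2$.

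My plan is a case analysis on the Jordan type of $u$. Because $p$ is odd, the elements $u$ and $u^2$ share a Jordan type and so lie in a common $\bG$-conjugacy class $\mathcal{O}$ (by \cite[Corollary 3]{liebeck-seitz:2012:unipotent-nilpotent-classes}, as was already used in the proof of \cref{prop:galois-aut-GGGR}). For $\bG = \SL_4(\mathbb{K})$ the class $\mathcal{O}$ is indexed by a partition $\lambda \vdash 4$ with $\lambda \neq (4)$, namely one of $(3,1), (2,2), (2,1,1), (1^4)$. The component group $A_\bG(u) = C_\bG(u)/C_\bG^{\circ}(u)$ is cyclic of order $\gcd(\lambda)$, so it is trivial for the three partitions $(3,1), (2,1,1), (1^4)$ and of order $2$ for $(2,2)$. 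In the three cases with trivial component group, the parametrization $\mathcal{O}^F/G \leftrightarrow H^1(F, A_\bG(u))$ from \cite[4.3.5]{geck:2003:intro-to-algebraic-geometry} gives $|\mathcal{O}^F/G| = 1$, so $u \sim_G u^2$ is automatic.

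The hard case, and the main obstacle in the argument, is $\lambda = (2,2)$. Here my plan is a symmetry argument on the (at-most-two-element) set $\mathcal{O}^F/G$. The squaring map $v \mapsto v^2$ is $\bG$-equivariant, commutes with $F$, and is injective on unipotent elements since $p$ is odd (each cyclic group $\langle v \rangle$ has odd order, so $v$ is determined by $v^2$ within it). It therefore restricts to a bijection on the finite set $\mathcal{O}^F$ and descends to a bijection on $\mathcal{O}^F/G$. A bijection of a set of size at most two that fixes one element must fix every element, so it suffices to produce a single $u_0 \in \mathcal{O}^F \cap G$ with $u_0 \sim_G u_0^2$.

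For this I take $u_0 = \diag(J, J) \in \bL^F$ in the split case and $u_0 = \diag(J, \bar J) \in \bL^F$ in the twisted case, where $J = \left[\begin{smallmatrix} 1 & 1 \\ 0 & 1 \end{smallmatrix}\right]$ and $\bar J = (wJw)^{-T}$ for the $2 \times 2$ row-swap matrix $w$. Using the explicit form of $\phi$ from the proof of \cref{lem:non-degen-tori}, a routine computation shows that $\phi$ restricted to $\bL$ sends $\diag(A, B)$ to $\diag(\bar B, \bar A)$, and hence that $u_0$ is $F$-fixed and of Jordan type $(2,2)$ in both rational forms. \cref{lem:L-u-u2-conj} applied to $\bM = \bL$ then yields $u_0 \sim_{\bL^F} u_0^2$, and so $u_0 \sim_G u_0^2$, completing the argument.
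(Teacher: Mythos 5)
Your argument is correct and follows essentially the same route as the paper's: reduce to Jordan type $(2,2)$ by observing the other non-regular classes have connected centraliser (hence a unique rational class), then settle $(2,2)$ via the symmetry argument on $\mathcal{O}^F/G$ together with the explicit computation underlying \cref{lem:L-u-u2-conj}. You are somewhat more careful than the paper in the twisted case, where $\diag(J,J)$ is not in fact $F$-fixed when $F = F_q\phi$; your representative $\diag(J,\bar{J})$ with $\bar{J}=(wJw)^{-T}$ supplies the rational point needed to invoke \cref{lem:L-u-u2-conj}.
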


\begin{proof}
If $u \in \bG$ is a unipotent element then $u$ and $u^2$ have the same Jordan normal form so they are $\bG$-conjugate. Each unipotent element has a connected centraliser unless $u$ is either regular or conjugate to $\diag(J,J)$ with $J = \left[\begin{smallmatrix} 1 & 1\\0 & 1 \end{smallmatrix}\right]$. The arguments used in the proof of \cref{lem:L-u-u2-conj} show the first statement. The last statement follows from \cref{prop:galois-aut-GGGR}.
\end{proof}

\begin{rem}
As $2$ is a generator of $\mathbb{F}_p^{\times}$ we see from the proof of \cite[6.7]{tiep-zalesski:2004:unipotent-elements} that if $u \in G = \SL_4^{\varepsilon}(q)$ is regular unipotent then we need not necessarily have $u$ is $G$-conjugate to $u^2$.
\end{rem}

\begin{pa}
To understand the effect of $\sigma$ on the odd degree characters of $G$ we will need to be able to distinguish between odd degree characters which are contained in the same $\widetilde{G}$-orbit. To do this we will use the GGGRs of $G$, see \cref{sec:GGGRs}. In this direction we will need the following consequence of \cite{tiep-zalesski:2004:unipotent-elements}.
\end{pa}

\begin{proposition}\label{prop:GGGRvalues}
Let $\Gamma_u$ be a GGGR of $G = \SL_n^{\varepsilon}(q)$. Then the following hold.
\begin{enumerate}
	\item For any $g \in G$ we have $\Gamma_u(g) \in \Q(\sqrt{\eta p})$, where $\eta\in\{\pm1\}$ is such that $p\equiv\eta\pmod{4}$,
\item if $q$ is a square, $n$ is odd, or $n/(n,q-\epsilon)$ is even then $\Gamma_u(g) \in \mathbb{Z}$ for all $g \in G$.
\end{enumerate}
In particular, if $q\equiv \pm1\pmod 8$, $n$ is odd, or $n/(n,q-\epsilon)$ is even, then $\Gamma^\sigma=\Gamma$.
\end{proposition}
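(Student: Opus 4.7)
The plan has three stages: first show that all values $\Gamma_u(g)$ lie in $\mathbb{Q}(\zeta_p)$; then use \cref{prop:galois-aut-GGGR} to reduce the two rationality claims to conjugacy statements for $u$ and its $p'$-powers; and finally appeal to the structural results of \cite[\S6]{tiep-zalesski:2004:unipotent-elements} to verify those conjugacies.

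For the first stage, the defining formula $\Gamma_u = [U(\lambda,1):U(\lambda,2)]^{-1/2}\Ind_{U(\lambda,2)}^G(\varphi_u)$ is conclusive: the linear character $\varphi_u$ takes values in the $p$-th roots of unity because $\chi_q$ does, while the prefactor is a rational integer power of $p$ since $[U(\lambda,1):U(\lambda,2)]$ is an even power of $q$. Hence $\Gamma_u(g) \in \mathbb{Z}[\zeta_p]$ for every $g \in G$. For the second stage, write $\gamma_a$ for any Galois automorphism of $\mathbb{Q}_{|G|}$ acting on $p$-th roots of unity via $\zeta \mapsto \zeta^a$; by \cref{prop:galois-aut-GGGR} we have $\Gamma_u^{\gamma_a} = \Gamma_{u^a}$. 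Since the quadratic subfield $\mathbb{Q}(\sqrt{\eta p})$ of $\mathbb{Q}(\zeta_p)$ is the fixed field of the subgroup of squares in $\gal(\mathbb{Q}(\zeta_p)/\mathbb{Q}) \cong \mathbb{F}_p^{\times}$, part (a) will follow once one knows that $u$ and $u^a$ are $G$-conjugate whenever $a$ is a square modulo $p$; part (b) will follow once $u$ and $u^a$ are $G$-conjugate for every $a$ coprime to $p$, under the listed hypotheses.

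The heart of the argument, and the main obstacle, is this descent from $\widetilde{G}$- to $G$-conjugacy. For (a), given $a$ a square modulo $p$, write $a \equiv b^2$ modulo the $p$-power order of $u$ (possible since $(\mathbb{Z}/p^k)^{\times}$ is cyclic for odd $p$). Since $u$ and $u^b$ share the same Jordan type there exists $g \in \widetilde{G}$ with $g u g^{-1} = u^b$, and then $g^2$ conjugates $u$ to $u^a$ with $\det(g^2) = \det(g)^2$ automatically a square in the cyclic group $C_{q-\varepsilon} = \det(\widetilde{G})$. The results of \cite[\S6]{tiep-zalesski:2004:unipotent-elements} describe the image of $\det$ on $C_{\widetilde{G}}(u)$ and show that a conjugator with square determinant can always be adjusted by an element of $C_{\widetilde{G}}(u)$ to land in $G$, yielding (a) in general. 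For (b), the same analysis shows that the obstruction in the quotient $C_{q-\varepsilon}/\det(C_{\widetilde{G}}(u))$ vanishes altogether under each of the three hypotheses --- $q$ a square, $n$ odd, or $n/(n,q-\varepsilon)$ even --- so every $p'$-power of $u$ becomes $G$-conjugate to $u$.

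Finally, the ``in particular'' statement is deduced as follows. The cases $n$ odd or $n/(n,q-\varepsilon)$ even are immediate from (b). For $q \equiv \pm 1 \pmod 8$, either $q$ is a perfect square (so (b) applies) or, using that $p^2 \equiv 1 \pmod 8$ for any odd prime $p$, one forces $p \equiv \pm 1 \pmod 8$; by quadratic reciprocity $2$ is then a square modulo $p$, so $\sigma$ --- which acts on $p$-th roots by squaring --- lies in the subgroup of $\gal(\mathbb{Q}(\zeta_p)/\mathbb{Q})$ fixing $\mathbb{Q}(\sqrt{\eta p})$ pointwise, and (a) gives $\Gamma_u^\sigma = \Gamma_u$.
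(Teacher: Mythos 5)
Your route is genuinely different from the paper's: the paper's proof is essentially a citation, observing that $\Gamma_u$ is a unipotently supported character and invoking Tiep--Zalesskii's rationality results for character values at unipotent elements (their Theorem~1.8, Lemma~2.6 and Theorem~10.10), whereas you work on the ``row'' side, using \cref{prop:galois-aut-GGGR} to convert the question into the rationality of the class labelling $\Gamma_u$. Your first stage (values lie in $\Q(\zeta_p)$ straight from the induction formula), the identification of $\Q(\sqrt{\eta p})$ as the fixed field of the squares in $\Gal(\Q(\zeta_p)/\Q)$, and the final quadratic-reciprocity step for $q\equiv\pm1\pmod 8$ are all correct and have the advantage of not needing the unipotent-supportedness of $\Gamma_u$ at all.

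There is, however, a genuine gap at exactly the step you call the heart of the argument. You claim that because $\det(g^2)=\det(g)^2$ is a square, the conjugator can always be corrected by an element of $C_{\widetilde{G}}(u)$ to land in $G=\SL_n^{\varepsilon}(q)$. This is false as stated: the image of $\det$ on $C_{\widetilde{G}}(u)$ is $(\mathbb{F}_q^{\times})^{d}$ (resp.\ the $d$-th powers in the norm-one subgroup when $\varepsilon=-1$), where $d$ is the greatest common divisor of the Jordan block sizes of $u$, and when $d$ has an odd prime divisor dividing $q-\varepsilon$ --- for instance $u$ regular in $\SL_3(q)$ with $3\mid q-1$ --- one has $(\mathbb{F}_q^{\times})^{2}\not\subseteq(\mathbb{F}_q^{\times})^{d}$, so an arbitrary conjugator of square determinant cannot be adjusted. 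The conclusion you need ($u\sim_G u^a$ for $a$ a square mod $p$, and for all $a$ prime to $p$ under the hypotheses of (b)) is nevertheless true, but it requires computing the determinant of an explicit conjugator rather than merely knowing it is a square: for a single Jordan block one can conjugate $J_m$ to $J_m^a$ by an element of determinant $a^{m(m-1)/2}$, so the set of determinants of all conjugators of $u$ to $u^a$ is the coset $a^{e}(\mathbb{F}_q^{\times})^{d}$ with $e=\sum_i m_i(m_i-1)/2$, and one then verifies $a^{e}\in(\mathbb{F}_q^{\times})^{d}$ using that $d\mid e$ when $d$ is odd, and that $\tfrac{d}{2}\mid e$ together with $a$ being a square in $\mathbb{F}_q^{\times}$ when $d$ is even. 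This finer analysis is what the cited sections of Tiep--Zalesskii actually carry out; as written, your appeal to them attributes to their results a statement they do not assert and which is false.
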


\begin{proof}
This follows from \cite[Theorem 1.8, Lemma 2.6, and Theorem 10.10]{tiep-zalesski:2004:unipotent-elements}, together with the fact that $\Gamma$ is a unipotently supported character of $G$.  The last statement follows by noting that $\sqrt{p}^\sigma=\sqrt{p}$ if $p\equiv \pm1\pmod 8$ and $\sqrt{p}^\sigma=-\sqrt{p}$ if $p\equiv \pm3\pmod 8$, since if $q$ is not a square, then $q\equiv p\pmod 8$. 
\end{proof}

\begin{pa}
We are now in a position to prove the second part of \cref{thm:SN2SgoodTypeA}, thus concluding its proof. Namely, we need to show that the simple groups $\PSL_n^{\varepsilon}(q)$ are SN2S-Good. Note that \cref{prop:conjIF-p=2,prop:p-2-conjFI} show that $\PSL_n^{\varepsilon}(q)$ is SN2S-Good when $q$ is even so our standing assumption that $q$ is odd is not restrictive. As we have already shown in \cref{prop:typeAconjFI} that \cref{cond:conjFI} holds for $\PSL_n^{\varepsilon}(q)$, we need only show that \cref{cond:conjIF} holds for the corresponding quasisimple groups. Parts of the argument are similar to that used in \cite[Theorem 4.15, part (3)]{schaeffer-fry:2015:odd-degree-characters} but we include it here for completeness.
\end{pa}

\begin{prop}\label{thm:notpm3mod8}
Assume $G$ is perfect so the quotient $G/Z = \PSL_n^\epsilon(q)$ is simple. Then any quasisimple group whose simple quotient is isomorphic to $\PSL_n^\epsilon(q)$ satisfies \cref{cond:conjIF}.
\end{prop}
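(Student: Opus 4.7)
By the reduction recorded in Paragraph~\ref{pa:conditions-remarks} and the small-case computations in~\cite{schaeffer-fry:2015:odd-degree-characters}, I may assume the quasisimple group in question is $G = \SL_n^\varepsilon(q)$ itself. Fix a $2$-group $Q \leq \Aut(G)$ such that $GQ/Z$ has a self-normalising Sylow $2$-subgroup (so $Q$ falls under one of the cases (1)--(5) of \cref{lem:whenSN2S}), together with a $Q$-invariant $\sigma$-fixed $\lambda \in \Irr(Z)$ and a $Q$-invariant $\chi \in \Irr_{2'}(G|\lambda)$; the goal is $\chi^\sigma = \chi$.

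The first step is to lift to $\widetilde{G}$: by \cref{prop:seriesabove} there exists $\widetilde{\chi} \in \Irr(\widetilde{G}|\chi)$ lying in a Lusztig series $\mathcal{E}(\widetilde{G},\widetilde{s})$ with $\widetilde{s}$ of $2$-power order, and combining \cref{lem:galoisLseries} with \cref{prop:GLn-sigma-fixed} then gives $\widetilde{\chi}^\sigma = \widetilde{\chi}$. Applying the Navarro--Tiep dichotomy \cref{prop:extension-to-GLn}, either $\widetilde{\chi}(1) = \chi(1)$ — in which case $\chi = \Res_G^{\widetilde{G}}(\widetilde{\chi})$ inherits $\sigma$-invariance and we are done — or $\widetilde{\chi}(1) = 2\chi(1)$ and $n = 2^r$ for some $r \geq 1$. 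In the remaining case $\Res_G^{\widetilde{G}}(\widetilde{\chi}) = \chi + \chi'$ with $\chi'$ a $\widetilde{G}$-conjugate of $\chi$, and $\sigma$-invariance of $\widetilde{\chi}$ leaves $\sigma$ only to fix or to swap $\{\chi,\chi'\}$. I must rule out a swap.

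To do so I separate $\chi$ from $\chi'$ via multiplicities in a carefully chosen GGGR. Because $p$ is odd, $\sigma$ squares $p$-power roots of unity, so \cref{prop:galois-aut-GGGR} yields $\Gamma_u^\sigma = \Gamma_{u^2}$; thus $\Gamma_u$ is $\sigma$-fixed whenever $u$ is $G$-conjugate to $u^2$. Alternatively, \cref{prop:GGGRvalues} guarantees $\Gamma_u$ is integer valued, hence $\sigma$-fixed, outside the narrow residual situation $q \equiv \pm 3 \pmod 8$ with $n \mid q-\varepsilon$. For any such $\sigma$-stable $\Gamma_u$ the non-negative integer $\langle \Gamma_u,\chi\rangle_G$ agrees with $\langle \Gamma_u,\chi^\sigma\rangle_G$, so it suffices to exhibit a unipotent $u$ satisfying both $\Gamma_u^\sigma = \Gamma_u$ and $\langle \Gamma_u,\chi\rangle_G \neq \langle \Gamma_u,\chi'\rangle_G$; the latter inequality will then force $\chi^\sigma \neq \chi'$, and hence $\chi^\sigma = \chi$.

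The asymmetry required in (b) forces the $G$-conjugacy class of $u$ to be strictly smaller than its $\widetilde{G}$-class, which is possible precisely because $n = 2^r$ makes the component group $A_G(u) = C_G(u)/C_G^\circ(u)$ nontrivial on suitable unipotent classes. For $n = 4$, a non-regular $u$ will do: \cref{lem:SL4-GGGRs} makes it $G$-conjugate to $u^2$, and the Levi analysis of \cref{pa:levi-id} together with \cref{lem:L-u-u2-conj} pins down the splitting of its $\widetilde{G}$-class into $G$-classes as well as the corresponding multiplicity asymmetry on $\chi$ versus $\chi'$. The general $n = 2^r$ case proceeds by an analogous Clifford-theoretic reduction to an appropriate Levi of the form $\prod_j \GL^\varepsilon_{m_j}(q)$. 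The main obstacle is the residual subcase $n = 2^r$, $n \mid q-\varepsilon$, $q \equiv \pm 3 \pmod 8$, in which \cref{prop:GGGRvalues} fails to apply and one must directly produce $u \sim_G u^2$ (in the spirit of \cref{lem:L-u-u2-conj}) while simultaneously verifying the multiplicity asymmetry; this is the technical heart of the argument.
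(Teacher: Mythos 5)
Your outline agrees with the paper through the lift to $\widetilde{G}$ and the Navarro--Tiep dichotomy, but it falls short exactly where you flag ``the technical heart,'' and your proposed strategy there does not match what the paper actually does (and likely would not work).

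First, a key numerical observation is missing: once you are in the residual situation $q\equiv\pm3\pmod 8$ with $n=2^r$ and $n/(n,q-\varepsilon)$ odd, you have $n\mid (q-\varepsilon)_2$ and $(q-\varepsilon)_2\in\{2,4\}$, so in fact $n\in\{2,4\}$. There is no ``general $n=2^r$ case'' to reduce to a product Levi; the paper immediately invokes the prior treatment of $n=2$ and is left only with $\SL_4^\varepsilon(q)$. Second, the paper's separating device is not ``find a $\sigma$-stable $\Gamma_u$ with $\langle\Gamma_u,\chi\rangle\neq\langle\Gamma_u,\chi'\rangle$'': it is Kawanaka's theorem that some $\widetilde{\Gamma}_u$ contains $\widetilde{\chi}$ with multiplicity exactly one, which pushes down (via $\widetilde{\Gamma}_u=\Ind_G^{\widetilde{G}}\Gamma_u$) to a \emph{unique} constituent $\chi_0$ of $\Res_G^{\widetilde G}\widetilde{\chi}$ with $\langle\Gamma_u,\chi_0\rangle_G=1$. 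If that particular $\Gamma_u$ is $\sigma$-fixed, uniqueness forces $\chi_0^\sigma=\chi_0$ and hence $\chi^\sigma=\chi$ after a $\widetilde{G}$-conjugation. You have no control over which $u$ arises, so you would need \emph{every} relevant $\Gamma_u$ to be $\sigma$-fixed, which is exactly why \cref{prop:GGGRvalues} and \cref{lem:SL4-GGGRs} are invoked for all $u$ (non-regular, in the $n=4$ case).

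Third — and this is where your plan actually breaks — for $G=\SL_4^\varepsilon(q)$ with $q\equiv\pm3\pmod8$ the regular unipotent element $u$ is \emph{not} $G$-conjugate to $u^2$, so $\Gamma_u$ is \emph{not} $\sigma$-fixed when $u$ is regular; this is precisely when the Kawanaka-chosen $u$ is regular, i.e.\ when $\chi$ is a regular character. In that case the GGGR-multiplicity approach gives nothing, and your ``produce $u\sim_G u^2$ with a multiplicity asymmetry'' has no obvious candidate. The paper abandons GGGR multiplicities here and instead verifies $\chi(g)^\sigma=\chi(g)$ pointwise: semisimple $g$ via connected centralisers and \cref{lem:classinvariant}; non-regular unipotent $g$ via $g\sim_G g^2$; \emph{regular} unipotent $g$ via $\chi(g)=0$, which is proved by showing a regular $\chi$ in this situation cannot also be semisimple; and mixed $g=g_sg_u$ via Lusztig restriction to $C_{\bG}(g_s)$ together with \cref{lem:central-char}, \cref{lem:unipsquare}, and \cref{lem:L-u-u2-conj}. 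None of this case analysis appears in your proposal, so the argument as written has a genuine gap at the step you yourself identify as essential.
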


\begin{proof}
We may assume that $G$ has a trivial Schur multiplier because the case $\PSL_2(9)\cong \mathfrak{A}_6$ was treated in \cite{schaeffer-fry:2015:odd-degree-characters}. In this case $G$ is a Schur cover of $S$ and it suffices to show that $G$ satisfies \cref{cond:conjIF}, c.f., \cref{pa:conditions-remarks}.

Let $Q$ and $\chi\in\irr_{2'}(G)$ be as in the hypothesis of \cref{cond:conjIF}. By \cref{prop:seriesabove} there exists a $\sigma$-fixed irreducible character $\widetilde{\chi} \in \Irr(\widetilde{G}|\chi)$ covering $\chi$. A well known result of Kawanaka assures that there exists a unipotent element $u \in G \subseteq \widetilde{G}$ whose corresponding GGGR $\widetilde{\Gamma}_u$ of $\widetilde{G}$ satisfies $\langle \widetilde{\Gamma}_u, \widetilde{\chi}\rangle_{\widetilde{G}} = 1$, see \cite[3.2.18]{kawanaka:1985:GGGRs-and-ennola-duality} or \cite[15.7]{taylor:2016:GGGRs-small-characteristics}. By the construction of the GGGRs we have $\widetilde{\Gamma}_u = \Ind_G^{\widetilde{G}}(\Gamma_u)$ where $\Gamma_u$ is the GGGR of $G$ determined by $u$. Hence, applying Frobenius reciprocity we have the restriction $\Res_G^{\widetilde{G}}(\widetilde{\chi})$ contains a unique irreducible constituent $\chi_0 \in \Irr(G)$ satisfying $\langle \Gamma_u, \chi_0 \rangle_G = 1$. Assume that $\Gamma_u^{\sigma} = \Gamma_u$. Then as $\widetilde{\chi}^{\sigma} = \widetilde{\chi}$ we have $\chi_0^\sigma$ is also a constituent of $\Res_G^{\widetilde{G}}(\widetilde{\chi})$ satisfying $\langle \Gamma_u, \chi_0^\sigma\rangle_G=1$. The uniqueness of such a character forces $\chi_0^{\sigma} = \chi_0$. By Clifford theory, we may write $\chi=\chi_0^g$ for some $g\in \widetilde{G}$, so $\chi^{\sigma} = (\chi_0^{\sigma})^g = \chi$.

If either $q\equiv \pm1\pmod 8$, or $n/(n,q-\epsilon)$ is even, then by \cref{prop:GGGRvalues} we have each GGGR $\Gamma_u$ of $G$ is $\sigma$-fixed so the above argument applies and \cref{cond:conjIF} holds for $G$. Thus we may assume that $q\equiv\pm3 \pmod{8}$ and $n/(n,q-\epsilon)$ is odd. If $\chi$ extends to $\widetilde{G}$ then Gallagher's theorem implies that $\Res_G^{\widetilde{G}}(\widetilde{\chi}) = \chi$ so $\chi^{\sigma} = \chi$. We may therefore assume that $\chi$ does not extend to $\widetilde{G}$, so by \cref{prop:extension-to-GLn} we must have $n = 2^r$ for some $r \geqslant 1$. Now, as $n/(n,q-\epsilon)$ is odd, we must have $n = 2^r$ divides the $2$-part $(q-\varepsilon)_2$ of $q-\varepsilon$. But $q - \varepsilon \equiv \pm 3 - \varepsilon \pmod{8}$, which is either $\pm 2 \pmod{8}$ or $4 \pmod{8}$. Hence $(q-\varepsilon)_2$ is either $2$ or $4$ so $n$ is either $2$ or $4$.

The case $n=2$ is treated in \cite{schaeffer-fry:2015:odd-degree-characters} so we need only show that \cref{cond:conjIF} holds for $G = \SL_4^\epsilon(q)$ with $q\equiv\pm 3\pmod8$. By \cref{lem:SL4-GGGRs} we have $\Gamma_u^{\sigma} = \Gamma_u$ unless $u$ is regular unipotent, so the above argument shows that $\chi^{\sigma} = \chi$ unless $\chi$ is a regular character. Assume the $\sigma$-invariant character $\widetilde{\chi} \in \Irr(\widetilde{G} | \chi)$ covering $\chi$ is contained in the Lusztig series $\mathcal{E}(\widetilde{G},\widetilde{s})$. Then by \cref{prop:seriesabove} we may assume $\widetilde{s}$ is of $2$-power order; in particular $s$ is of $2$-power order.

We now aim to show that $\chi(g)^{\sigma} = \chi(g)$ for each $g \in G$, thus showing $\chi^{\sigma} = \chi$. First, assume $g$ is semisimple.  Then as $\bG$ is simply connected, we have $C_{\bG}(g)$ is connected. This easily implies that the $G$-conjugacy class containing $g$ is invariant under conjugation by $\widetilde{G}$ so $\chi(g)^{\sigma} = \chi(g)$ in this case by \cref{lem:classinvariant}. Next, assume $g$ is unipotent, so by \cref{lem:unipsquare} we have $\chi(g)^{\sigma} = \chi(g^2)$. If $g$ is not a regular unipotent element then $g$ and $g^2$ are $G$-conjugate, c.f., \cref{lem:SL4-GGGRs}, so again $\chi(g)^{\sigma} = \chi(g)$.

If $g$ is regular unipotent then we claim $\chi(g) = 0$, thus trivially $\chi(g)^{\sigma} = \chi(g)$. By \cite[Corollary 14.38]{digne-michel:1991:representations-of-finite-groups-of-lie-type} we have $\chi(g) = 0$ if $D_G(\chi)$ does not occur as a constituent of any Gelfand--Graev character. Assume for a contradiction that $D_G(\chi)$ does occur in some Gelfand--Graev character. Then $\chi$ is both regular and semisimple. This implies $\widetilde{\chi}$ is both regular and semisimple. However, by \cite[15.6, 15.10]{bonnafe:2006:sln} this can only happen if the trivial and sign character of the Weyl group of $C_{\widetilde{\bG}}(s)$ coincide. Clearly this is not the case, so we must have $\chi(g) = 0$ as desired.

We now need only consider the case where $g = g_sg_u = g_ug_s$ with $g_s\neq1$ semisimple and $g_u\neq 1$ unipotent. Note that we have $C_{\bG}(g) = C_{C_{\bG}(g_s)}(g_u)$ and the centraliser $C_{\bG}(g_s)$ is a Levi subgroup of $\bG$. The subgroup $C_{\bG}(g_s)$ is $\bG$-conjugate to a standard Levi subgroup of $\bG$ so $C_{\bG}(g_s)$ is isomorphic to either $\GL_3(\mathbb{K})$, $\GL_2(\mathbb{K}) \times \mathbb{G}_m$, or the subgroup $\bL$ defined in \cref{eq:levi}. In the first two cases the centraliser of every unipotent element is connected, which implies $C_{\bG}(g)$ is connected. As argued above we can conclude from \cref{lem:classinvariant} that $\chi(g)^{\sigma} = \chi(g)$.

Thus we are left with the case where $C_{\bG}(g_s)$ is $\bG$-conjugate to $\bL$. As is remarked in \cite[\S25.A]{bonnafe:2006:sln} we have $\chi(g) = {}^*R_{C_{\bG}(g_s)}^{\bG}(\chi)(g)$ so we need only show that ${}^*R_{C_{\bG}(g_s)}^{\bG}(\chi)(g)^{\sigma} = {}^*R_{C_{\bG}(g_s)}^{\bG}(\chi)(g)$. The class function ${}^*R_{C_{\bG}(g_s)}^{\bG}(\chi)$ is a $\mathbb{Z}$-linear combination of irreducible characters, so it suffices to show that $\lambda(g)^{\sigma} = \lambda(g)$ for each irreducible constituent $\lambda$ of ${}^*R_{C_{\bG}(g_s)}^{\bG}(\chi)$.

Assume $\lambda$ is such a constituent. Then $\lambda$ is a contained in a Lusztig series of $C_G(g_s)$ labelled by a semisimple element which is $G^{\star}$-conjugate to $s$, see \cite{bonnafe:2006:sln}. As mentioned above, we have $s$ is of $2$-power order, hence so is any $G^{\star}$-conjugate of $s$. By \cref{lem:galoisLseries} we thus have the Lusztig series containing $\lambda$ is $\sigma$-invariant. If $\omega_{\lambda} : Z(C_G(g_s)) \to \Ql^{\times}$ is the central character of $\lambda$, then $\lambda(g) = \omega_{\lambda}(g_s)\lambda(g_u)$ because $g_s \in Z(C_G(g_s))$. As the Lusztig series containing $\lambda$ is $\sigma$-invariant, we have $\omega_{\lambda}(g_s)^{\sigma} = \omega_{\lambda}(g_s)$ by \cref{lem:central-char}. Applying \cref{lem:unipsquare}, we see that $\lambda(g_u)^{\sigma} = \lambda(g_u^2)$.  However, by \cref{lem:L-u-u2-conj} we must have $\lambda(g_u^2) = \lambda(g_u)$ because $C_{\bG}(g_s)$ is an $F$-stable $\bG$-conjugate of $\bL$. In particular, we have $\lambda(g)^{\sigma} = \omega_{\lambda}(g_s)^{\sigma}\lambda(g_u)^{\sigma} = \omega_{\lambda}(g_s)\lambda(g_u) = \lambda(g)$ as desired.
\end{proof}

\section*{Acknowledgements}
The authors would like to thank the organizers of the 2015 workshop on ``Representations of Finite Groups" at Mathematisches Forschungsinstitut Oberwolfach, where this collaboration began.  The first-named author was supported in part by a grant from the Simons Foundation (Award \#351233) and by an NSF-AWM Mentoring Travel Grant. In addition, she would like to thank G.\ Malle and B.\ Sp{\"a}th for their hospitality and helpful discussions during the research visit to TU Kaiserslautern supported by the latter grant. The second author gratefully acknowledges the financial support of an INdAM Marie-Curie Fellowship and grants CPDA125818/12 and 60A01-4222/15 of the University of Padova. He also wishes to thank the MSU Denver for its kind hospitality during a two-week long visit.

\setstretch{0.96}
\renewcommand*{\bibfont}{\small}
\printbibliography
\end{document}